\newtheorem{thm}{Theorem}[section]
\newtheorem{prop}[thm]{Proposition}
\newtheorem{lem}[thm]{Lemma}
\newtheorem{cor}[thm]{Corollary}
\newtheorem*{question}{Question}
\theoremstyle{definition}
\newtheorem{definition}[thm]{Definition}
\renewcommand*\env@matrix[1][\arraystretch]{%
  \edef\arraystretch{#1}%
  \hskip -\arraycolsep
  \let\@ifnextchar\new@ifnextchar
  \array{*\c@MaxMatrixCols c}}
\theoremstyle{remark}
\newtheorem{rem}[thm]{Remark}
\theoremstyle{remark}
\theoremstyle{question}
\numberwithin{equation}{section}
\DeclareMathOperator{\dist}{dist} % The distance.
\begin{document}

%%
%% The title of the paper goes here.  Edit to your title.
%%

\title{Oscillating Wandering Domains for Functions with Escaping Singular Values}

%%
%% Now edit the following to give your name and address:
%% 

\author{Kirill Lazebnik}

\maketitle

\begin{abstract}
We construct a transcendental entire $f:\mathbb{C}\rightarrow\mathbb{C}$ such that (1) $f$ has bounded singular set, (2) $f$ has a wandering domain, and (3) each singular value of $f$ escapes to infinity under iteration by $f$.
\end{abstract}

\tableofcontents

% use lowercase except for proper names

\section{Introduction}
\label{introduction}

Associated with any transcendental entire function $f:\mathbb{C}\rightarrow\mathbb{C}$ is a dynamical  partition of $\mathbb{C}$ into two sets: the \emph{Fatou set} $\mathcal{F}(f)$ and its complement, the \emph{Julia set} $\mathcal{J}(f)$. The Fatou set $\mathcal{F}(f)$ is defined as the maximal region of normality for the family of iterates $(f^{n})_{n=1}^{\infty}$, and is itself further partitioned into connected open components termed \emph{Fatou components}. A Fatou component $U$ is said to be \emph{periodic} if $f^n(U)\subseteq U$ for some $n\geq1$, and \emph{pre-periodic} if $f^k(U)$ is periodic for some $k\geq0$. There are two immediate questions which arise in the study of $\mathcal{F}(f)$: (1) classifying, up to conjugacy, the dynamics of $f$ on any periodic Fatou component, and (2) determining whether all Fatou components of $f$ are pre-periodic. We first discuss (1). 

The classification of the dynamics of $f$ on periodic components of $\mathcal{F}(f)$ was given already by Fatou in \cite{Fat20}. It is remarkable that for each possible periodic component in this classification, there is a necessary (and, in most cases, easy to state) relationship with a \emph{singular value} of $f$: some point in the plane at which it is not possible to define all branches of $f^{-1}$. We denote the collection of singular values of $f$ by $S(f)$. The simplest example of the aformentioned relationship is that a \emph{basin of attraction} (a Fatou component on which $f$ is conjugate to dilation on $\mathbb{D}$ by a complex factor with modulus strictly smaller than one) must contain a singular value of $f$ (see, for instance, Theorem 37 in \cite{MR1626031}). 

The question (2) was answered for transcendental $f$ with finitely many singular values in \cite{EL92}, \cite{GK86} (using techniques of \cite{Sul85}), where it was shown that all Fatou components must be pre-periodic for such $f$. It was already known that non pre-periodic Fatou components, termed \emph{wandering domains}, could exist for $f$ with infinitely many singular values \cite{Bak76} (see also \cite{Her84}, \cite{EL87}, \cite{FH09}, \cite{Bis15}, \cite{MR3339086}, \cite{MR3579902}, \cite{2017arXiv171110629F}, \cite{2018arXiv180711820M}).  In analogy with the problem (1), an active line of research is in determining relationships between a wandering domain of a function $f$ and the singular values of $f$ (see, for instance, \cite{BFJK17}). One precise question in this area is as follows, where we note that $\mathcal{B}$ denotes the class of transcendental entire $f$ with \emph{bounded} singular set:

\begin{question}{\cite{MR3124942}}
\label{main_question}  Let $f\in\mathcal{B}$, and suppose that the singular values of $f$ tend to infinity uniformly under iteration, that is, $\lim_{n\rightarrow\infty} \inf_{s\in S(f)} |f^n(s)| = \infty$. Can $f$ have a wandering domain?

\end{question}

\noindent The present work is concerned with the following variant of Question \ref{main_question}:

\begin{question}{}
\label{variant}  Let $f\in\mathcal{B}$, and suppose that the singular values of $f$ tend to infinity under iteration. Can $f$ have a wandering domain?

\end{question}

Question \ref{main_question} was posed in a work in which the authors demonstrated the non-existence of wandering domains for a certain subclass of $\mathcal{B}$. Outside of this subclass, the existence of $f\in\mathcal{B}$ with a wandering domain was proven in \cite{Bis15}, and this is the approach that the present work most closely follows (for a different approach, see \cite{2018arXiv180711820M}). The wandering domain for the function of \cite{Bis15} is oscillating (see \cite{MR3525384}) and contains infinitely many singular values in its grand orbit.  Nevertheless, it was shown in \cite{2017arXiv171110629F} that, with appropriate modifications, a similar approach yields a \emph{univalent} wandering domain for a function $f\in\mathcal{B}$. In particular, there is a wandering Fatou component for the function $f$ of  \cite{2017arXiv171110629F} whose forward orbit contains no singular values of $f$. The constructions in \cite{Bis15}, \cite{MR3339086}, \cite{MR3579902}, \cite{2017arXiv171110629F}, \cite{2018arXiv180711820M} of wandering domains in class $\mathcal{B}$ do not answer Question \ref{variant} (nor Question \ref{main_question}): for $f$ as in \cite{Bis15}, \cite{MR3339086}, \cite{MR3579902}, \cite{2018arXiv180711820M} there are oscillating singular values, and the orbits of the singular values in \cite{2017arXiv171110629F} are not sufficiently well understood for this purpose. The present work shows that the answer to Question \ref{variant} is yes:

%The wandering domain \footnote{•}r the function of \cite{Bis15} is oscillating, and contains infinitely many singular values in its grand orbit. We remark that any wandering domain for a function in class $\mathcal{B}$ is necessarily oscillating (or \emph{bungeeing} in the terminology of \cite{MR3525384}) by a result of \cite{EL92}. Since a large amount of contraction is necessary to construct an oscillating wandering domain, it  It was shown in \cite{2017arXiv171110629F} that, with appropriate modifications, a similar approach to \cite{Bis15} yields a \emph{univalent} wandering domain for a function $f\in\mathcal{B}$. 

\begin{thm} \label{main_theorem} There exists an entire function $f \in \mathcal{B}$ with a wandering Fatou component, such that for all $s\in S(f)$, $f^n(s) \rightarrow \infty$ as $n\rightarrow\infty$. 

\end{thm}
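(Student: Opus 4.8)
The plan is to adapt the folding/quasiconformal-surgery approach of Bishop \cite{Bis15} and its refinement in \cite{2017arXiv171110629F}, but with precise control on the orbits of \emph{all} singular values so that each one escapes to infinity. The starting point is to build a model map on an infinite ``tract'' structure: one takes a union of disjoint simply connected domains $T_j$ (Bishop's folding construction produces, from a given infinite tree in $\mathbb C$, an entire function whose tracts are neighborhoods of the tree's edges), and prescribes on $\C\setminus\bigcup_j T_j$ a bounded-valued behavior so that $f\in\classB$. The wandering domain will be an oscillating sequence of round-ish disks $D_n$, with $f(D_n)$ a large disk that is eventually mapped close to $D_{n+1}$, exactly as in the oscillating constructions; the novelty required is in the placement of the tree/tracts so that the critical and asymptotic values — which for a folding construction are essentially controlled (critical values near $\pm\omega$ for fixed small $\omega$, asymptotic values absent or also controlled) — are arranged to march off to $\infty$ along with the rest of the dynamics.

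Concretely, I would proceed in the following steps. \textbf{Step 1 (Model).} Fix a sequence $R_n\to\infty$ growing fast, and design a tree $T\subset\C$ together with a conformal model $g$ (affine on the complementary components, of exponential type on the tracts) that simultaneously (i) has a ``free'' disk $D_n$ of definite size near scale $R_n$ on which $g$ behaves like an affine expansion sending $D_n$ biholomorphically onto a disk covering $D_{n+1}$, and (ii) has its singular set confined to a bounded set of ``labels'' but positioned dynamically so that each singular value sits in a region that is pushed towards $\infty$. \textbf{Step 2 (Folding/QC surgery).} Invoke Bishop's \emph{Folding Theorem} to replace $g$ by a genuine entire $f\in\classB$ quasiconformally close to $g$ off a small neighborhood of $T$; the bounded-distortion estimates give $\|f-g\|$ small where it matters, so the free disks $D_n$ survive and still form an orbit of a wandering domain. \textbf{Step 3 (Singular value control).} Track the finitely-many ``types'' of singular values through the surgery: the folding construction's critical values lie within a bounded distance of a prescribed finite set, and I would arrange that set (and the geometry of the tracts) so that the $\omega$-limit set of each singular value is $\{\infty\}$. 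The key point is that escaping behavior of a singular value is an \emph{open} and \emph{robust} condition once one knows the singular value lands in a tract that maps (under a controlled iterate) well inside a larger tract at a larger scale. \textbf{Step 4 (Verification).} Check $f\in\classB$, check $D_0$ is a wandering Fatou component (normality of $f^n$ on $D_0$ via the orbit staying in the free disks, non-(pre)periodicity because the $D_n$ escape and oscillate), and check $f^n(s)\to\infty$ for every $s\in S(f)$.

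The main obstacle is \textbf{Step 3}: reconciling two competing demands. For an oscillating wandering domain one wants the orbit to repeatedly return to bounded scale (that is what ``oscillating'' means and what makes normality on $D_0$ subtle but achievable via Bishop's scheme), yet the singular values must \emph{never} return — they must escape monotonically in the $\omega$-limit sense. In \cite{Bis15} the singular values are entangled with the wandering orbit (they oscillate too); the whole difficulty is to decouple them. Concretely one must design the tract tree so that the critical points producing the ``essential'' critical values lie over a part of the plane that the dynamics sends coherently to $\infty$, while still leaving enough room for the free wandering disks at bounded scale. I expect this to require a careful two-scale bookkeeping: a ``fast'' sequence of scales carrying the tracts and their critical values outward, interleaved with ``slow'' excursions of the wandering disk back toward the origin, with the folding construction applied on a tree that encodes both. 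A secondary obstacle is ensuring that the quasiconformal correction in Step 2, while small off $T$, does not create \emph{new} singular values whose orbits are uncontrolled — here one leans on the precise statement of the Folding Theorem, which guarantees $S(f)$ is contained in a small neighborhood of a prescribed finite set, so that the escaping property, being robust, is inherited.

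Finally, for the verification in \textbf{Step 4} I would use the standard criterion: if $U$ is a Fatou component and $f^n|_U\to\infty$ locally uniformly along a subsequence while $f^{n_k}|_U$ stays in a bounded region along another subsequence, then $U$ cannot be periodic or pre-periodic, so it is a wandering domain; normality on $U$ follows because the orbit of $U$ is trapped in the union of the well-separated free disks $D_n$ where $f$ is uniformly expanding onto the next disk with controlled distortion, which forces equicontinuity. Combined with the singular-value escape from Step 3, this yields the theorem. I would not expect any of these verification estimates to be difficult once the model in Step 1 is correctly set up; the entire weight of the argument is in the design of the tree and the dynamical placement of the tracts and their singular values.
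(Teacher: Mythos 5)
Your outline reproduces the general framework (model map, Bishop folding, quasiconformal correction, free disks carrying an oscillating orbit), but the mechanism you propose for Step 3 is exactly where the argument breaks, and it is the step the paper exists to solve. First, since $f\in\mathcal{B}$ the singular set is \emph{bounded}, so you cannot place the critical values ``over a part of the plane that the dynamics sends coherently to $\infty$'' at larger and larger scales: in this construction the critical values of each disc-component map necessarily lie in the bounded region near the origin, i.e.\ precisely the region to which the wandering orbit keeps returning. They can only escape \emph{through the dynamics}, and their forward orbits are orbits of $f=g\circ\phi^{-1}$, where the correction $\phi$ is produced by the (non-constructive) Measurable Riemann Mapping Theorem and changes every time you adjust the model. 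Second, your key claim that escaping is an ``open and robust'' condition is false in the relevant regime: an orbit that must first travel through the expanding oscillating region has its position amplified over many iterates, so arbitrarily small changes in $\phi$ (hence in the parameters of the model) can knock a would-be escaping singular orbit into the wandering behavior. There is a genuine circularity --- the parameters determine $g$, hence the Beltrami coefficient, hence $\phi$, hence $f$ and the orbits of the critical values --- and ``robustness'' does not break it.

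The paper resolves this not by approximate placement but by forcing each critical value of $f|_{\phi(D_{p_k})}$ to land \emph{exactly} on a point of $f^{-n_{k+1}}\circ\phi(\partial D_{p_{k+1}})$ that $g$ sends to $+1$, so that after finitely many iterates every singular value reaches $+1$, whose orbit escapes along $\mathbb{R}^+$. Achieving this exact targeting despite the circular dependence is done by a fixed-point argument: one shows the relevant parameter-to-orbit map is continuous (via continuous dependence on parameters for the Beltrami equation, Theorem \ref{parameters}, together with convexity and distortion estimates such as (\ref{eq1})--(\ref{eq4})), applies Brouwer's Fixpoint Theorem (Theorem \ref{fixpoint}) to satisfy finitely many such conditions simultaneously, and then passes to infinitely many conditions by a compactness/diagonal argument using Theorem \ref{Lehto-Virtanen}. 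None of this machinery (nor any substitute for it) appears in your proposal, so as written the decoupling of the singular orbits from the wandering orbit --- the heart of the theorem --- is not established.
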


\begin{rem} We note that for the function $f$ of Theorem \ref{main_theorem}, the convergence $f^n(s) \rightarrow \infty$ is not uniform in $s \in S(f)$. In other words, $\inf \{ \left| f^n(s) \right| \textrm{ } : \textrm{ } s \in S(f)\}  \not\rightarrow \infty$ as $n\rightarrow\infty$, and so Question \ref{main_question} remains open.

\end{rem}

The wandering Fatou component of $f$ as in Theorem \ref{main_theorem} is oscillating, whereas the singular values of $f$ escape. This is surprising in that it demonstrates that the dynamics of $f\in\mathcal{B}$ on a \emph{wandering} Fatou component can differ markedly from the dynamics of $f$ on $S(f)$. This is in contrast to the situation, already discussed, of the necessary relationship between the dynamics of $f$ on any \emph{periodic} Fatou component and the singular values of $f$. For a related result outside of class $\mathcal{B}$, we refer to \cite{MR2458806}.

\vspace{10mm}

We conclude the Introduction with a brief, non-technical outline of the proof of Theorem \ref{main_theorem}. We remark that the following is purely expository and is also meant to motivate the necessity of some of the more technical aspects of the present work. Precise definitions and arguments will follow in Sections \ref{preliminaries}-\ref{fixpoint_section}. We will refer to Figure \ref{fig:rough_idea} throughout our discussion.

\begin{figure}
% Use the relevant command to insert your figure file.
% For example, with the graphicx package use
\centering
\scalebox{.3}{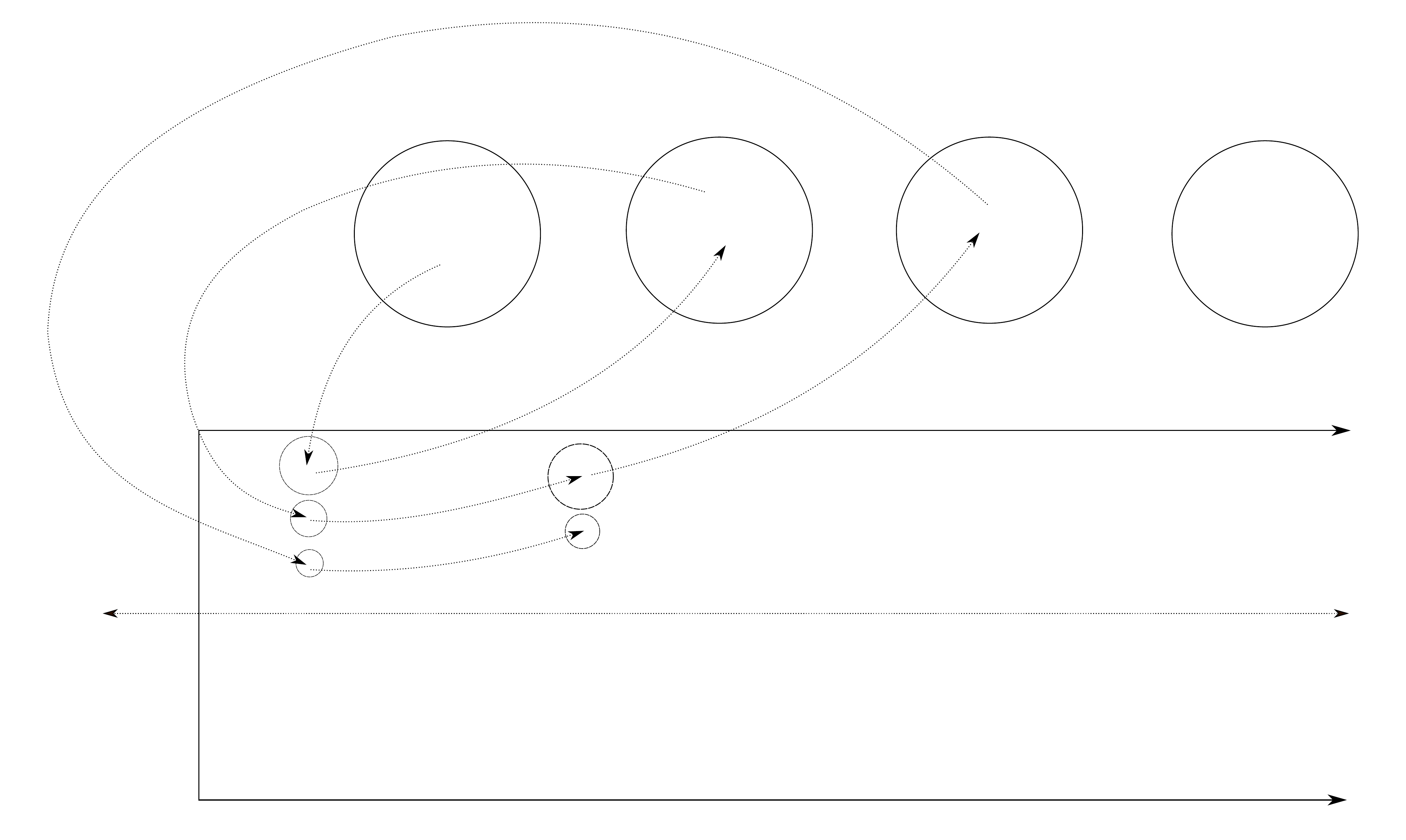}
% figure caption is below the figure
\caption{Pictured is a schematic of the wandering domain for $f$ as in Theorem \ref{main_theorem}. The Euclidean discs pictured lying outside $S^+$ are subdiscs of the discs $(D_n)$, and are contained in a wandering domain for $f$. For the sake of clarity, this Figure is not to scale: the images of the subdiscs of $D_n$ are smaller and closer to $1/2$ than shown here.  }
\label{fig:rough_idea}       % Give a unique label
\end{figure}

Let \[S^+ := \{ z\in\mathbb{C} : \textrm{Re}(z)>0\textrm{, } |\textrm{Im}(z)|<\pi/2 \}.\] We will define a quasiregular function $g:S^+\rightarrow\mathbb{C}$ (see (\ref{quasiregular_definition}) in Section \ref{model_map}) such that: \begin{itemize} \item[$\bullet$] $g(\mathbb{R}^+)\subset\mathbb{R}^+$, \item[$\bullet$] $g^n(x)\rightarrow\infty$ as $n\rightarrow\infty$ for $x\in\mathbb{R}^+$, and \item[$\bullet$] $\mathbb{C}\setminus[-1,1]\subset g(S^+)$. \end{itemize} We will consider a sequence of pairwise disjoint discs $D_n:=D(z_n,1)$ with radii $1$ and centers $z_n$ satisfying $\textrm{Im}(z_n)=\pi$ (see Section \ref{model_map}). The map $g$ is defined so that there is a subsequence of natural numbers $(\tilde{p}_n)_{n=1}^\infty$ (see Definition \ref{sequence_along_which_we} in Section \ref{model_map}) such that each $D_{\tilde{p}_n}$ has an $n^{\textrm{th}}$ preimage $g^{-n}(D_{\tilde{p}_n})$ close to $1/2$, where $g^{-1}$ denotes a branch of the inverse of $g|_{S^+}$. For the purposes of this outline and to simplify notation, we will assume $D_{\tilde{p}_n}=D_n$ (we will be more precise in Sections \ref{model_map}-\ref{fixpoint_section}).

% \[g(\mathbb{R}^+)\subset\mathbb{R}^+\textrm{, } g^n(x)\rightarrow\infty \textrm{ as } n\rightarrow\infty \textrm{ for } x\in\mathbb{R}^+\textrm{, and } \mathbb{C}\setminus[-1,1]\subset g(S^+).\]

We may extend the definition of the map $g|{S^+}$ to a quasiregular map $g: \cup_nD_n \rightarrow \mathbb{D}$ so that \begin{align}\label{desired_behavior} g\left(D\left(z_{n}, \frac{n}{n+1}\right)\right)\subset g^{-(n+1)}\left( D\left(z_{n+1}, \frac{n+1}{n+2}\right) \right) \textrm{ for all } n\geq1 \end{align} (see Figure \ref{fig:rough_idea}, and Sections \ref{disc_component_maps}, \ref{model_map}). This essentially describes the desired behavior for the wandering domain we wish to construct: a subdisc of $D_1$ is mapped to a preimage of a subdisc of $D_2$, whence $g(D_1)$ iterates to a subdisc of $D_2$. Then this subdisc of $D_2$ is mapped to a preimage of a subdisc of $D_3$, and so on. 

In order to construct an \emph{entire} function $f$ with a wandering domain, we will first extend the definition of $g$ (so far given only on $S^+$ and the discs $(D_n)_{n=1}^\infty$) to a quasiregular function defined on all of $\mathbb{C}$ via the Folding Theorem of Bishop \cite{Bis15}. The entire function of Theorem \ref{main_theorem} is then defined as $f:=g\circ\phi^{-1}$, where $\phi$ is a quasiconformal mapping obtained by applying the Measurable Riemann Mapping Theorem (see Theorem \ref{MRT}) to the Beltrami coefficient $g_{\overline{z}}/g_z$ of $g$ (see Section \ref{preliminaries}).

A fundamental difficulty is that the relation (\ref{desired_behavior}) need not hold when $g$ is replaced by $f$: the map $f$ differs from $g$ by precomposition with $\phi^{-1}$, and so even if $\phi^{-1}$ is ``close to the identity'', the maps $f^n$ and $g^n$ may still differ significantly for large $n$. We will prove that we can define $g$ such that the relation:

\begin{align}\label{desired_behavior2} f\circ\phi\left(D\left(z_n, \frac{n}{n+1}\right)\right)\subset f^{-(n+1)}\circ\phi\left( D\left(z_{n+1}, \frac{n+1}{n+2}\right) \right) \end{align}

%\begin{equation}\label{desired_behavior2} f^{n_{k+1}+1}\circ\phi\left(D\left(z_{p_k}, \frac{k}{k+1}\right)\right) \subset \phi\left(D\left(z_{p_{k+1}}, \frac{k+1}{k+2}\right)\right)\textrm{, for all } k \geq 1
%\end{equation}

\noindent holds in a subsequence of $n$ (see the proof of Theorem \ref{main_theorem} at the end of Section \ref{fixpoint_section}), whence it will follow that $\phi(D(z_1, \frac{1}{2}))$ is contained in a wandering Fatou component for $f$. It is not clear how to define $g$ such that (\ref{desired_behavior2}) holds. As already explained, the validity of relation (\ref{desired_behavior}) will not directly imply that a similar relation holds for $f$. Moreover, changing the definition of $g$ will change the correction map $\phi$ (via the Measurable Riemann mapping theorem), and hence $f$, in a non-explicit manner.

A procedure of defining $g$ so that the map $f:=g\circ\phi^{-1}$ has a wandering domain was described already in \cite{Bis15}, \cite{MR3339086}, \cite{MR3579902}, \cite{2017arXiv171110629F}. The strategy therein consists of carefully estimating the differences $|\phi(z)-z|$ for $z$ within the relevant region where the wandering domain of $f$ is being constructed, and ensuring that the differences $|\phi(z)-z|$ are sufficiently small so that a relation such as (\ref{desired_behavior}) for $g$ persists even for the function $f:=g\circ\phi^{-1}$. The contribution of the present work is in being able to also ensure that all critical values of $f$ iterate to $\infty$. This requires a more delicate analysis because one requires finer information on the behavior of $\phi$ rather than an estimate on $|\phi(z)-z|$: one needs fine control on the iterates $(f^n)_{n=1}^\infty$ at all critical values of $f$.  The approach presented here involves combining continuous dependence on parameters (Theorem \ref{parameters}) together with Brouwer's Fixpoint Theorem (Theorem \ref{fixpoint}). A similar approach was developed for a different purpose in \cite{2018arXiv180704581B}, and independently in \cite{2018arXiv180711820M}. We will comment further on our strategy in Remark \ref{purely_expository} (see Section \ref{fixpoint_section}) after we have established some necessary notation. 

In Section \ref{preliminaries} we will record some classical results we will make frequent use of. Section \ref{disc_component_maps} will focus on a definition for $g$ in the discs $(D_n)_{n=1}^\infty$. In Section \ref{model_map} we extend the definition of $g$ to all of $\mathbb{C}$. Section \ref{fixpoint_section} sets up an application of Brouwer's Fixpoint Theorem and contains the main technical contributions of the present work.

%This strategy is the same as taken in \cite{Bis15}, \cite{MR3579902}, \cite{2017arXiv171110629F}. The contribution of the present work is in being able to also ensure that the relevant critical values of $f$ iterate to $1$ (see Figure \ref{fig:general_strat}), whence $f^n(1)\rightarrow\infty$ as $n\rightarrow\infty$. This involves combining continuous dependence on parameters (Theorem \ref{parameters}) together with Brouwer's Fixpoint Theorem (Theorem \ref{fixpoint}). A similar approach was developed for a different purpose in \cite{2018arXiv180704581B}, and independently in \cite{2018arXiv180711820M}.

%Changing $g$ will change the correction map $\phi$, and hence $f$, in a non-explicit manner (via the Measurable Riemann mapping theorem).

%the behavior of $f$ depends on the map $\phi$, and $\phi$ depends non-explicitly 

%Moreover, this dependence is not explicit: the Beltrami coefficient of $g$, and hence the behavior of the correction map $\phi$, depend on a choice of $\boldsymbol{\delta}$, $\boldsymbol{R}$, $(\boldsymbol{r}_j)$, $\boldsymbol{m}$, $\boldsymbol{w}$. The straightening Theorem \ref{MRT} is not constructive, and so it is not evident what the pointwise dependence of iterates of $f$ is on the parameters $\boldsymbol{\delta}$, $\boldsymbol{R}$, $(\boldsymbol{r}_j)$, $\boldsymbol{m}$, $\boldsymbol{w}$. 

% Moreover, the set \[ f^{-(n+1)}\left( \frac{n+1}{n+2}D_{n+1} \right) \] \emph{depends} on a choice of $g$: changing the 

\vspace{2.5mm}

\noindent \emph{Acknowledgements.} The author would like to thank Chris Bishop, N\'uria Fagella, Xavier Jarque, and Lasse Rempe-Gillen for various conversations pertaining to the present work. 

\begin{rem} Our convention will be to use the notation $D(a,r):=\{z\in\mathbb{C} : |z-a| < r\}$ for the open Euclidean disk centered at $a\in\mathbb{C}$ of radius $r$, and $\overline{D}(a,r):=\{z\in\mathbb{C} : |z-a| \leq r\}$ for the closed Euclidean disk centered at $a\in\mathbb{C}$ of radius $r$.

% $\overline{D}(a,r):=\{z\in\mathbb{C} : |z-a| \leq r\}$ for the \emph{closed} Euclidean disk centered at $a\in\mathbb{C}$ of radius $r$, and $D(a,r):=\{z\in\mathbb{C} : |z-a| < r\}$ for the \emph{open} Euclidean disk centered at $a\in\mathbb{C}$ of radius $r$.

\end{rem}

\section{Preliminaries}
\label{preliminaries}

In this Section we will list, for the reader's convenience, several classical results from function theory (Theorems \ref{thm:Koebe} and \ref{thm:Koebe_argument}) and from the theory of quasiconformal mappings (Theorems \ref{MRT}, \ref{parameters}, \ref{Lehto-Virtanen}), but we first start with the classical Brouwer Fixpoint Theorem:

\begin{thm}{\emph{\cite{MR1511582}}} \label{fixpoint}  Let $X\subset\mathbb{R}^n$ be non-empty, compact, and convex. Any continuous function $f : X \rightarrow X$ has a fixpoint.
\end{thm}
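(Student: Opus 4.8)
The plan is to deduce Theorem~\ref{fixpoint} in two stages: reduce the case of a general compact convex $X$ to that of a closed Euclidean ball, and then treat the ball via the classical ``no-retraction'' theorem.

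\emph{Reduction to a ball.} Since $X$ is compact, it lies in some closed ball $B\subset\mathbb{R}^n$. Because $X$ is non-empty, closed, and convex, the nearest-point projection $\pi:B\to X$ — sending $y$ to the unique minimizer of $x\mapsto|y-x|$ over $x\in X$ — is well defined and continuous (indeed $1$-Lipschitz, via the obtuse-angle characterization $\langle y-\pi(y),\, x-\pi(y)\rangle\le 0$ for all $x\in X$); this is the only place convexity and closedness of $X$ are used. Given a continuous $f:X\to X$, the composite $f\circ\pi:B\to X\subseteq B$ is continuous, and any fixpoint $p$ of $f\circ\pi$ lies in $X$ (being in the image of $f$), so $\pi(p)=p$ and hence $f(p)=p$. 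Thus it suffices to prove the theorem for $X=\overline{B}^n$, the closed unit ball.

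\emph{From fixpoints to retractions.} I would then establish the \emph{no-retraction theorem}: there is no continuous $r:\overline{B}^n\to S^{n-1}$ with $r|_{S^{n-1}}=\mathrm{id}$. Granting this, assume a continuous $f:\overline{B}^n\to\overline{B}^n$ had no fixpoint. For each $x$ the Euclidean ray from $f(x)$ through $x$ meets $S^{n-1}$ in a unique point $r(x)$; the intersection parameter is an explicit continuous function of $(x,f(x))$, so $r$ is continuous, and $r(x)=x$ whenever $x\in S^{n-1}$ — a retraction, contradiction.

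\emph{The no-retraction theorem.} For $n=1$ this is just the intermediate value theorem. For general $n$ I would use Milnor's analytic argument: smooth a hypothetical retraction (Stone--Weierstrass, then rescale so the image lies exactly on $S^{n-1}$) to get a $C^\infty$ retraction $r$, and set $f_t:=(1-t)\,\mathrm{id}+t\,r$. For small $t>0$ the map $f_t$ is injective (as $r$ is Lipschitz), fixes $S^{n-1}$, and — by the inverse function theorem — maps $\overline{B}^n$ onto itself, so $V(t):=\int_{\overline{B}^n}\det Df_t\,dx=\mathrm{vol}(\overline{B}^n)$ for all small $t$. Since $V$ is polynomial in $t$, it is constant on $[0,1]$; but at $t=1$ the map $f_1=r$ has image in the hypersurface $S^{n-1}$, so $\det Dr\equiv 0$ and $V(1)=0$, a contradiction. (Alternatively, Sperner's lemma yields a purely combinatorial proof, and singular homology a one-line one: $\mathrm{id}_{S^{n-1}}$ would factor through the contractible $\overline{B}^n$, inducing $\mathbb{Z}\to 0\to\mathbb{Z}$ equal to the identity.)

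\emph{Main obstacle.} The substantive step is the no-retraction theorem; the reduction and the passage from a fixpoint-free map to a retraction are routine point-set topology. Retracting a ball onto its boundary cannot be excluded by soft arguments — one genuinely needs the analytic device above, a combinatorial lemma, or an algebraic-topological invariant — and within Milnor's proof the delicate point is arranging, after smoothing, that the approximating map still takes values exactly on $S^{n-1}$.
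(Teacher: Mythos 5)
Your proposal is a correct and complete outline of a standard proof of Brouwer's theorem. Note, however, that the paper does not prove this statement at all: Theorem~\ref{fixpoint} is quoted as a classical result with a citation to Brouwer's original 1910 paper, and it is then used as a black box in Section~\ref{fixpoint_section} (via the map (\ref{fixpoint_map})). So there is no ``paper proof'' to compare against; judged on its own, your argument is the familiar and correct one. The reduction to the closed ball via the $1$-Lipschitz nearest-point projection onto a closed convex set is exactly right, and it is also the only place where the convexity hypothesis enters --- which matters for this paper, since the set $E_\varepsilon$ of (\ref{convex_set}) is shown to be convex in Lemma~\ref{convexity} precisely so that the theorem applies. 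The passage from a fixpoint-free self-map of $\overline{B}^n$ to a retraction onto $S^{n-1}$, and the exclusion of such a retraction by Milnor's volume-polynomial argument (or by Sperner's lemma, or by $H_{n-1}$), are all standard; you correctly flag the one genuinely delicate point in the analytic route, namely smoothing the hypothetical retraction while keeping its values on the sphere and its boundary behavior under control. Nothing in your write-up is wrong or missing at the level of ideas; the only caveat is that for the purposes of this paper the theorem is an imported classical fact, so no proof was expected.
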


Theorems \ref{MRT} and \ref{parameters} below are (respectively) referred to as the Measurable Riemann Mapping Theorem, and continuous dependence on parameters. For proofs, history, and references, we refer the reader to Chapter 4 of \cite{MR2245223}. The last Theorem we will record that concerns quasiconformal mappings is Theorem \ref{Lehto-Virtanen}, and is exposited as Theorem 5.2 in \cite{MR0344463}. We first remark that for a $C^1$ function $g$, we have the definitions $g_z := (1/2)(g_x - i g_y)$ and $g_{\overline{z}} = (1/2)(g_x + i g_y)$. For functions $g$ which are quasiregular but not necessarily $C^1$, $g_z$ and $g_{\overline{z}}$ are defined using distributional derivatives (see, for instance, Chapter VI of \cite{MR0344463} for details).

\begin{thm}{}
\label{MRT}
If $\mu\in L^{\infty}(\mathbb{C})$ with $||\mu||_{L^\infty(\mathbb{C})}<1$, there exists a quasiconformal mapping $\phi:\mathbb{C}\rightarrow\mathbb{C}$ so that $\phi_{\overline{z}}/\phi_z=\mu$ a.e.. Moreover, given any other quasiconformal $\Phi:\mathbb{C}\rightarrow\mathbb{C}$ with $\phi_{\overline{z}}/\phi_z=\Phi_{\overline{z}}/\Phi_z$ a.e., there exists a conformal $\psi: \mathbb{C}\rightarrow\mathbb{C}$ so that $\Phi= \psi\circ\phi$.  

\end{thm}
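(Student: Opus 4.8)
The plan is to follow the classical Ahlfors--Bers argument, establishing existence first for smooth data and then in general, and then deducing the uniqueness statement from Weyl's lemma.

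\emph{Reduction to smooth data and an integral equation.} First I would assume $\mu \in C_c^{\infty}(\mathbb{C})$ and look for $\phi$ of the form $\phi(z) = z + \mathcal{C}h$, where $\mathcal{C}h(z) := -\tfrac{1}{\pi}\int_{\mathbb{C}} h(\zeta)(\zeta-z)^{-1}\, dA(\zeta)$ is the Cauchy transform and $h$ is to be determined. Since $\partial_{\overline{z}}\mathcal{C}h = h$ and $\partial_z \mathcal{C}h = \mathcal{S}h$, where $\mathcal{S}$ denotes the Beurling transform (the principal-value singular integral with kernel $-\tfrac{1}{\pi}(\zeta-z)^{-2}$), the Beltrami equation $\phi_{\overline{z}} = \mu\,\phi_z$ becomes the integral equation $(I - \mu\mathcal{S})h = \mu$. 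The operator $\mathcal{S}$ is an isometry of $L^2(\mathbb{C})$ and is bounded on $L^p(\mathbb{C})$ for all $1<p<\infty$; by interpolation $\|\mathcal{S}\|_{L^p}\to 1$ as $p\to 2$. Writing $k:=\|\mu\|_{L^{\infty}(\mathbb{C})}<1$ and fixing $p$ slightly larger than $2$ with $k\,\|\mathcal{S}\|_{L^p}<1$, the operator $I-\mu\mathcal{S}$ is invertible on $L^p(\mathbb{C})$ via its Neumann series, giving $h=\sum_{n\geq 0}(\mu\mathcal{S})^n\mu\in L^p(\mathbb{C})$. Because $h=\mu\,\phi_z$ is supported in $\operatorname{supp}\mu$, the map $\phi = z+\mathcal{C}h$ is conformal near $\infty$, lies in $W^{1,p}_{\mathrm{loc}}$, and by Calderón--Zygmund regularity together with a degree argument is a homeomorphism of $\mathbb{C}$ with $|\phi_{\overline{z}}/\phi_z|\le k$ a.e.; that is, a $K$-quasiconformal solution with $K=(1+k)/(1-k)$.

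\emph{General $\mu$.} For arbitrary $\mu$ with $\|\mu\|_{L^{\infty}(\mathbb{C})}=k<1$ I would approximate by $\mu_n\in C_c^{\infty}(\mathbb{C})$ with $\|\mu_n\|_{L^{\infty}}\le k$ and $\mu_n\to\mu$ a.e., obtain $K$-quasiconformal solutions $\phi_n$ as above, and normalize them by $\phi_n(0)=0$, $\phi_n(1)=1$. Normalized $K$-quasiconformal maps are uniformly Hölder on compacta, so a subsequence converges locally uniformly to a $K$-quasiconformal $\phi$. \textbf{The main obstacle} is verifying that this limit still satisfies $\phi_{\overline{z}}=\mu\,\phi_z$ a.e.: the derivatives $(\partial_z\phi_n,\partial_{\overline{z}}\phi_n)$ are bounded in $L^p_{\mathrm{loc}}$ (by $K$-quasiconformality and the normalization), hence converge weakly, along a further subsequence, to the corresponding derivatives of $\phi$; combining weak $L^p_{\mathrm{loc}}$ convergence with the pointwise convergence $\mu_n\to\mu$ and the relations $\partial_{\overline{z}}\phi_n=\mu_n\,\partial_z\phi_n$ gives $\phi_{\overline{z}}=\mu\,\phi_z$ a.e. This passage to the limit is the delicate step, and it relies essentially on the $L^p$-boundedness of $\mathcal{S}$ used above.

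\emph{The conformal factor.} Finally, given solutions $\phi$ and $\Phi$ with $\phi_{\overline{z}}/\phi_z=\Phi_{\overline{z}}/\Phi_z$ a.e., set $\psi:=\Phi\circ\phi^{-1}$, which is again quasiconformal. The composition rule for Beltrami coefficients forces $\mu_{\psi}\circ\phi=0$ a.e., hence $\mu_{\psi}=0$ a.e. since quasiconformal maps carry null sets to null sets. By Weyl's lemma a quasiregular map with vanishing Beltrami coefficient is holomorphic, so $\psi$ is conformal and $\Phi=\psi\circ\phi$, completing the proof.
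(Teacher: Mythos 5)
This theorem is not proven in the paper at all: it is the Measurable Riemann Mapping Theorem, stated as classical background, and for its proof the author simply refers to Chapter 4 of Hubbard's book (see also Lehto--Virtanen). So there is no "paper proof" to match; your proposal should be judged as a reconstruction of the standard literature argument, and as such it is the right one. What you outline is precisely the classical Ahlfors--Bers/Bojarski proof: reduce to the Beltrami equation via the ansatz $\phi = z + \mathcal{C}h$, solve $(I-\mu\mathcal{S})h=\mu$ by a Neumann series in $L^p$ for $p>2$ close to $2$ (using Calder\'on--Zygmund boundedness of the Beurling transform and continuity of its norm in $p$), pass from smooth compactly supported $\mu$ to general $\mu$ by normalized $K$-quasiconformal compactness plus weak $L^p_{\mathrm{loc}}$ convergence of the derivatives, and obtain the uniqueness statement from the composition rule for dilatations together with Weyl's lemma (plus the fact that quasiconformal maps preserve null sets, which you correctly use to upgrade $\mu_\psi\circ\phi=0$ a.e.\ to $\mu_\psi=0$ a.e.). The two places where your sketch leans hardest on the references are exactly where the cited texts do the real work: (i) in the smooth case, showing the solution is a global homeomorphism (your "degree argument" usually proceeds by showing the Jacobian is nonvanishing, e.g.\ by writing $\phi_z$ as an exponential, and then using properness/conformality near $\infty$), and (ii) in the limiting step, justifying that the weak limits of $\partial_z\phi_n,\partial_{\overline z}\phi_n$ are the distributional derivatives of $\phi$ and that $(\mu_n-\mu)\partial_z\phi_n\rightharpoonup 0$ (bounded a.e.\ convergence of $\mu_n$ against an $L^p_{\mathrm{loc}}$-bounded sequence, via Egorov). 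With those details filled in as in the standard references, your argument is a complete and correct proof of the statement, including the final conformal-factor claim $\Phi=\psi\circ\phi$.
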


%\begin{thm}
%\label{parameters}
%Let $\mu \in L^{\infty}(\mathbb{C})$ with $||\mu||_{L^\infty(\mathbb{C})}<1$. Denote by $\phi_\mu$ the unique quasiconformal solution of $\frac{\partial \phi_\mu}{\partial\bar{z}}=\mu\frac{\partial \phi_\mu}{\partial z}$ satisfying some fixed normalization. If $\mu_n \rightarrow \mu$ a.e. as $n\rightarrow\infty$, then $\phi_{\mu_n} \rightarrow \phi_{\mu}$ as $n\rightarrow\infty$ uniformly on compact subsets. Consequently, for any fixed $z\in\mathbb{C}$, the map $L^\infty(\mathbb{C})\rightarrow\mathbb{C}$ given by $\mu\mapsto\phi_\mu(z)$ is continuous.
%\end{thm}

\begin{thm}
\label{parameters}
Let $\mu \in L^{\infty}(\mathbb{C})$ and $(\mu_n)_{n=1}^\infty \in L^{\infty}(\mathbb{C})$ be such that there exists $0<k<1$ with $||\mu||_{L^\infty(\mathbb{C})}<k$ and $||\mu_n||_{L^\infty(\mathbb{C})}<k$ for all $n$. Denote by $\phi_\mu$ the unique quasiconformal solution of $\frac{\partial \phi_\mu}{\partial\bar{z}}=\mu\frac{\partial \phi_\mu}{\partial z}$ satisfying some fixed normalization, and similarly for $\phi_{\mu_n}$. If $\mu_n \rightarrow \mu$ a.e. as $n\rightarrow\infty$, then $\phi_{\mu_n} \rightarrow \phi_{\mu}$ as $n\rightarrow\infty$ uniformly on compact subsets. Consequently, for any fixed $z\in\mathbb{C}$, the map $L^\infty(\mathbb{C})\rightarrow\mathbb{C}$ given by $\mu\mapsto\phi_\mu(z)$ is continuous.
\end{thm}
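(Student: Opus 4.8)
The plan is to represent the normalized solutions through the Cauchy and Beurling transforms and then push the hypothesis $\mu_n\to\mu$ a.e.\ through a Neumann series. I would first reduce to Beltrami coefficients supported in a fixed disc $\D$: for general $\mu\in L^\infty(\C)$ one factors $\phi_\mu=\phi^{(2)}\circ\phi^{(1)}$, where $\phi^{(1)}$ is the solution for $\mu\cdot\mathbf{1}_{\D}$ and $\phi^{(2)}$ the solution for the pushed-forward coefficient, which is supported outside $\phi^{(1)}(\D)$ and becomes compactly supported after an inversion; continuity for the two (compactly supported) factors then yields continuity for $\phi_\mu$. So assume $\operatorname{supp}\mu,\operatorname{supp}\mu_n\subset\D$ and normalize by the principal solution $\phi_\mu(z)=z+o(1)$ as $z\to\infty$ (up to an explicit affine renormalization this matches any other fixed normalization). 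Then $\phi_\mu=\operatorname{id}+\mathcal C\omega_\mu$, where $\mathcal C$ is the Cauchy transform and $\omega_\mu\in L^p(\C)$, for a fixed $p>2$ to be chosen, is the solution of $\omega_\mu=\mu\,\mathcal S\omega_\mu+\mu$ with $\mathcal S$ the Beurling transform.

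Next I would fix $p>2$ close enough to $2$ that $k\,\|\mathcal S\|_{L^p\to L^p}<1$; this is possible since $\|\mathcal S\|_{L^2\to L^2}=1$ (it is a unimodular Fourier multiplier) and $p\mapsto\|\mathcal S\|_{L^p\to L^p}$ is continuous by Riesz--Thorin interpolation with the Calder\'on--Zygmund bounds. For any $\nu$ with $\|\nu\|_{\infty}<k$ the operator $I-\nu\mathcal S$ is then invertible on $L^p(\C)$ with $\|(I-\nu\mathcal S)^{-1}\|_{L^p\to L^p}\le\big(1-k\,\|\mathcal S\|_{L^p\to L^p}\big)^{-1}$, a bound uniform in $\nu$; in particular $\omega_{\mu_n}=(I-\mu_n\mathcal S)^{-1}\mu_n$ and $\omega_\mu=(I-\mu\mathcal S)^{-1}\mu$. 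The hypothesis that all of $\mu,\mu_1,\mu_2,\dots$ obey $\|\,\cdot\,\|_\infty<k$ for a \emph{single} $k<1$ is precisely what makes this resolvent bound uniform in $n$.

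I would then show $\omega_{\mu_n}\to\omega_\mu$ in $L^p(\C)$. Since the $\mu_n$ are supported in $\D$, bounded by $k$, and converge a.e., dominated convergence gives $\mu_n\to\mu$ in $L^p(\C)$. Using the resolvent identity and $(I-\mu\mathcal S)^{-1}\mu=\omega_\mu$,
\[
\omega_{\mu_n}-\omega_\mu=(I-\mu_n\mathcal S)^{-1}(\mu_n-\mu)+(I-\mu_n\mathcal S)^{-1}\big((\mu_n-\mu)\,\mathcal S\omega_\mu\big).
\]
The first summand tends to $0$ in $L^p$ by the uniform resolvent bound; for the second, $\mathcal S\omega_\mu$ is a fixed $L^p$ function, the products $(\mu_n-\mu)\mathcal S\omega_\mu$ converge to $0$ a.e.\ and are dominated by $2k\,|\mathcal S\omega_\mu|$, so they converge to $0$ in $L^p$ by dominated convergence, and the resolvent bound again finishes it. Finally, since $p>2$ and the $\omega_{\mu_n}$ are supported in $\D$, on each compact $K$ one has $\|\mathcal C\omega_{\mu_n}-\mathcal C\omega_\mu\|_{L^\infty(K)}\le c(K,p)\,\|\omega_{\mu_n}-\omega_\mu\|_{L^p(\C)}$ (the Cauchy transform of a compactly supported $L^p$ function, $p>2$, is H\"older continuous with norm controlled by the $L^p$ norm), hence $\phi_{\mu_n}=\operatorname{id}+\mathcal C\omega_{\mu_n}\to\operatorname{id}+\mathcal C\omega_\mu=\phi_\mu$ locally uniformly.

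Undoing the reduction to compact support gives the general case, the only point needing care being that the pushed-forward coefficients defining the second factors still converge a.e., which follows from the locally uniform convergence of the first factors together with that of their inverses. The last assertion is immediate from the first part: if $\mu_n\to\mu$ in $L^\infty(\C)$ with $\|\mu\|_\infty<1$, then $\mu_n\to\mu$ a.e.\ and $\|\mu_n\|_\infty<k<1$ for all large $n$, so $\phi_{\mu_n}(z)\to\phi_\mu(z)$. The step I expect to be most delicate is not any single estimate but keeping the uniformity organized: everything rests on the common bound $k<1$, which simultaneously makes the resolvents $(I-\mu_n\mathcal S)^{-1}$ uniformly bounded on $L^p$ and lets the a.e.\ convergence of the coefficients, upgraded to $L^p$ convergence on the shared compact support, pass to $L^p$ convergence of the densities $\omega_{\mu_n}$; the remaining effort is the bookkeeping in the reduction to compactly supported coefficients.
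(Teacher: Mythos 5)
Your core argument is the classical Ahlfors--Bers/Calder\'on--Zygmund proof, and it is worth noting that the paper itself offers no proof of Theorem \ref{parameters} to compare against: the result is quoted with a pointer to Chapter 4 of \cite{MR2245223}. For Beltrami coefficients supported in a fixed disc your proposal is complete and correct: the choice of $p>2$ with $k\,\|\mathcal S\|_{L^p\to L^p}<1$, the uniform resolvent bound, the identity $\omega_{\mu_n}-\omega_\mu=(I-\mu_n\mathcal S)^{-1}\bigl((\mu_n-\mu)+(\mu_n-\mu)\mathcal S\omega_\mu\bigr)$ (which does check out), the two dominated-convergence steps, and the uniform bound for the Cauchy transform of a compactly supported $L^p$ density all go through, and the renormalization remark disposes of the choice of normalization (for non-compactly supported $\mu$ one should fix, say, $0,1,\infty$ rather than a hydrodynamic normalization, but that is exactly the affine/M\"obius bookkeeping you indicate).

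The genuine gap is in the reduction to compact support. You assert that the coefficients of the second factors converge a.e. ``because the first factors and their inverses converge locally uniformly.'' The second-factor coefficient has the form $\bigl(\mu_n\,\overline{\partial\phi^{(1)}_n}/\partial\phi^{(1)}_n\bigr)\circ(\phi^{(1)}_n)^{-1}$, so you are composing the merely measurable functions $\mu_n$ with a \emph{varying} sequence of maps, and a.e. convergence is not preserved under such compositions: even with $\mu_n=\mu$ fixed, $\mu\circ g_n\to\mu\circ g$ a.e. is not automatic for measurable $\mu$ when $g_n\to g$ locally uniformly, so the one-line justification does not establish what the compact-support argument needs. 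The claim can be repaired, but it takes a real argument: $\mu_n\to\mu$ locally in measure, and convergence in measure \emph{is} preserved under composition with the $(\phi^{(1)}_n)^{-1}$ because these are conformal on the relevant region with locally uniformly bounded area distortion (approximate $\mu$ by continuous functions and use change of variables); then run your $L^p$ argument along a.e.-convergent subsequences and conclude for the full sequence since every subsequence produces the same limit $\phi^{(2)}$. Alternatively, and more cheaply given the paper's toolkit, one can avoid the decomposition altogether: the uniform bound $k$ makes the normalized maps $\phi_{\mu_n}$ a normal family; any locally uniform sublimit is quasiconformal with the same normalization and, by Theorem \ref{Lehto-Virtanen}, has Beltrami coefficient $\mu$ a.e., hence equals $\phi_\mu$ by the uniqueness statement in Theorem \ref{MRT}, so the whole sequence converges. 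This compactness-plus-uniqueness pattern is precisely the one the author deploys later in the proof of Proposition \ref{penultimate}.
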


\begin{thm}{\emph{\cite{MR0083025}}} \label{Lehto-Virtanen} Let $\phi_n:\mathbb{C}\rightarrow\mathbb{C}$ be a sequence of $K$-quasiconformal mappings converging to a quasiconformal mapping $\phi: \mathbb{C}\rightarrow\mathbb{C}$ with complex dilatation $\mu$ uniformly on compact subsets of $\mathbb{C}$. If the complex dilatations $\mu_n(z)$ of $\phi_n$ tend to a limit $\mu_\infty(z)$ almost everywhere, then $\mu_\infty(z)=\mu(z)$ almost everywhere.  

\end{thm}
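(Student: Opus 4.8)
The plan is to prove Theorem \ref{Lehto-Virtanen} by passing to the limit, in a weak sense, in the Beltrami equations $(\phi_n)_{\overline{z}} = \mu_n\,(\phi_n)_z$ and identifying the resulting equation with the one satisfied by $\phi$. Throughout, set $k := (K-1)/(K+1) < 1$, so that $|\mu_n| \le k$, $|\mu| \le k$, and $|\mu_\infty| \le k$ almost everywhere.

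First I would show that the distributional derivatives $(\phi_n)_z$ and $(\phi_n)_{\overline{z}}$ are uniformly bounded in $L^2_{\mathrm{loc}}(\mathbb{C})$. For this, recall that a quasiconformal $\phi_n$ satisfies $J_{\phi_n} = |(\phi_n)_z|^2 - |(\phi_n)_{\overline{z}}|^2 \ge (1-k^2)\,|(\phi_n)_z|^2$ a.e., while $\iint_D J_{\phi_n} = \area(\phi_n(D))$ for every disc $D$. Since $\phi_n \to \phi$ uniformly on compact sets, the images $\phi_n(D)$ lie in a fixed bounded set, so $\iint_D |(\phi_n)_z|^2$ is bounded independently of $n$, and the bound for $(\phi_n)_{\overline{z}}$ follows from $|(\phi_n)_{\overline{z}}| \le k\,|(\phi_n)_z|$. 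On the other hand, $\phi_n \to \phi$ in $L^2_{\mathrm{loc}}$ forces $(\phi_n)_z \to \phi_z$ and $(\phi_n)_{\overline{z}} \to \phi_{\overline{z}}$ in the sense of distributions; and a sequence bounded in $L^2_{\mathrm{loc}}$ that converges distributionally converges weakly in $L^2_{\mathrm{loc}}$ to the same limit. Hence $(\phi_n)_z \rightharpoonup \phi_z$ and $(\phi_n)_{\overline{z}} \rightharpoonup \phi_{\overline{z}}$ weakly in $L^2_{\mathrm{loc}}$.

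Next I would pass to the limit in $(\phi_n)_{\overline{z}} = \mu_n\,(\phi_n)_z$ by writing $\mu_n\,(\phi_n)_z = \mu_\infty\,(\phi_n)_z + (\mu_n - \mu_\infty)\,(\phi_n)_z$. Since $\mu_\infty$ is a fixed bounded function, $\mu_\infty\,(\phi_n)_z \rightharpoonup \mu_\infty\,\phi_z$ weakly in $L^2_{\mathrm{loc}}$. For the error term, fix a test function $\psi$: then $(\mu_n - \mu_\infty)\overline{\psi} \to 0$ pointwise a.e. and is dominated by $2k|\psi|$, hence $(\mu_n - \mu_\infty)\overline{\psi} \to 0$ strongly in $L^2$ by dominated convergence, and pairing this with the $L^2_{\mathrm{loc}}$-bounded sequence $(\phi_n)_z$ gives $\iint (\mu_n - \mu_\infty)(\phi_n)_z\,\overline{\psi} \to 0$. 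Therefore $\phi_{\overline{z}} = \mu_\infty\,\phi_z$ a.e. Since also $\phi_{\overline{z}} = \mu\,\phi_z$ a.e., we get $(\mu - \mu_\infty)\,\phi_z = 0$ a.e.; and because $\phi$ is quasiconformal, $J_\phi = |\phi_z|^2(1-|\mu|^2) > 0$ a.e., so $\phi_z \ne 0$ a.e. and therefore $\mu = \mu_\infty$ a.e.

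I expect the main obstacle to be the passage to the limit in the product $\mu_n\,(\phi_n)_z$, where both factors vary with $n$; the decomposition above isolates the fixed multiplier $\mu_\infty$ so that the remaining term is controlled by dominated convergence, using $\mu_n \to \mu_\infty$ a.e. together with the uniform bound $|\mu_n| \le k$. I note that one can alternatively deduce the statement from the two results just cited: if $\phi^{*}$ denotes the suitably normalized solution of the Beltrami equation with coefficient $\mu_\infty$, then Theorem \ref{parameters} gives $\phi_{\mu_n} \to \phi^{*}$ locally uniformly, where $\phi_{\mu_n}$ is the normalized solution with coefficient $\mu_n$; by Theorem \ref{MRT}, $\phi_n = \psi_n \circ \phi_{\mu_n}$ for affine maps $\psi_n$, and $\psi_n = \phi_n \circ \phi_{\mu_n}^{-1}$ then converges to an affine $\psi$ with $\phi = \psi \circ \phi^{*}$, whence $\mu = \mu_\infty$ a.e. — here the technical points are the convergence of the inverse maps $\phi_{\mu_n}^{-1}$ and the non-degeneracy of the limit $\psi$.
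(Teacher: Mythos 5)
Your proof is correct. There is nothing in the paper to compare it against: Theorem \ref{Lehto-Virtanen} is quoted as a classical result (Bers \cite{MR0083025}, exposited as Theorem 5.2 in \cite{MR0344463}) and the paper gives no proof of it. Your argument --- uniform local $L^2$ bounds on $(\phi_n)_z,(\phi_n)_{\overline{z}}$ via $J_{\phi_n}\geq(1-k^2)|(\phi_n)_z|^2$ and the area of $\phi_n(D)$, weak $L^2_{\mathrm{loc}}$ convergence of the derivatives to $\phi_z,\phi_{\overline{z}}$, the splitting $\mu_n(\phi_n)_z=\mu_\infty(\phi_n)_z+(\mu_n-\mu_\infty)(\phi_n)_z$ with the error killed by dominated convergence against a test function, and finally $\phi_z\neq0$ a.e.\ for the quasiconformal limit --- is precisely the standard proof of this convergence theorem, and each step is justified at the appropriate level of detail. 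Your alternative sketch via Theorems \ref{MRT} and \ref{parameters} would also work within the paper's logic (both are cited independently), though it is closer in spirit to deducing the statement from machinery that is itself usually proved by the weak-convergence argument you gave, so your primary route is the better one.
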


The last two theorems we record in this Section are classical results from function theory. Theorem \ref{thm:Koebe} is a well known distortion estimate due to Koebe. Theorem \ref{thm:Koebe_argument} is due to Grunsky \cite{Gru32} and estimates the arguments of the quantities whose moduli are estimated in Theorem \ref{thm:Koebe}. We refer the reader to Sections II.4 and IV.1 of \cite{MR0247039} for proofs. 

\begin{thm}\label{thm:Koebe}
Let $F$ be a univalent function on the disk $D(a,r)$ for some $a\in\mathbb{C}$ and $r>0$.  Then
\begin{itemize}
%\item[(a)] $f(\overline{D}(a,r)) \supset D\left( F(a),\frac14 |F'(a)| r\right)$. 

\item[(a)] For all $z\in D(a,r)$,

\[
 \dfrac{r^2}{(r+|z-a|)^2}\leqslant \left|\dfrac{F(z)-F(a)}{F'(a)(z-a)}\right| \leqslant\dfrac{r^2}{(r-|z-a|)^2}.
 \]
 
 \item[(b)]  For all $z\in D(a,r)$,

\[
 \dfrac{1-\left| \frac{z-a}{r} \right|}{(1+|\frac{z-a}{r}|)^3}\leqslant \left| \frac{F'(z)}{F'(a)} \right| \leqslant\dfrac{1+\left| \frac{z-a}{r} \right|}{(1-|\frac{z-a}{r}|)^3}.
 \]
 
\end{itemize}

\end{thm}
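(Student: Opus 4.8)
The plan is to reduce both estimates to the classical \emph{growth} and \emph{distortion} theorems for the class $\mathcal S$ of univalent maps $g:\D\to\C$ normalized by $g(0)=0$, $g'(0)=1$, and then to obtain those from the Bieberbach coefficient bound $|a_2|\le 2$. The reduction is by affine normalization: given $F$ univalent on $D(a,r)$, set
\[
g(w):=\frac{F(a+rw)-F(a)}{rF'(a)}\qquad(w\in\D),
\]
which is univalent on $\D$ with $g(0)=0$ and $g'(0)=1$, i.e.\ $g\in\mathcal S$. The substitution $w=(z-a)/r$ gives $g(w)/w=(F(z)-F(a))/(F'(a)(z-a))$ and $g'(w)=F'(z)/F'(a)$, so, with $t:=|z-a|/r<1$, the asserted inequalities (a) and (b) are exactly
\[
\frac{1}{(1+t)^{2}}\le\left|\frac{g(w)}{w}\right|\le\frac{1}{(1-t)^{2}}
\qquad\text{and}\qquad
\frac{1-t}{(1+t)^{3}}\le|g'(w)|\le\frac{1+t}{(1-t)^{3}}
\]
for $|w|=t$, namely the growth and distortion theorems for $\mathcal S$. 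Thus it suffices to prove those.

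For the distortion theorem, the key input is $|a_2|\le2$ for $g(w)=w+a_2w^2+\cdots\in\mathcal S$, which I would get by applying the area theorem (the complement of the image of a univalent $\zeta\mapsto\zeta+\sum_{n\ge0}b_n\zeta^{-n}$ on $\{|\zeta|>1\}$ has nonnegative area, hence $\sum_{n\ge1}n|b_n|^2\le1$) to the odd square-root transform $\zeta\mapsto 1/\sqrt{g(1/\zeta^2)}=\zeta-\tfrac{a_2}{2}\zeta^{-1}+\cdots$. Then for a fixed $w_0\in\D$ the Koebe transform
\[
K(w):=\frac{g\!\left(\dfrac{w+w_0}{1+\overline{w_0}\,w}\right)-g(w_0)}{(1-|w_0|^2)\,g'(w_0)}
\]
again lies in $\mathcal S$, and a direct computation gives its second coefficient as $\tfrac12\big((1-|w_0|^2)\tfrac{g''(w_0)}{g'(w_0)}-2\overline{w_0}\big)$. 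Applying $|a_2(K)|\le2$, writing $w_0=\rho e^{i\theta}$, using the identity $\rho\,\partial_\rho\log g'(\rho e^{i\theta})=w_0\,g''(w_0)/g'(w_0)$ and passing to real parts yields
\[
\frac{2\rho^2-4\rho}{1-\rho^2}\le\rho\,\partial_\rho\log|g'(\rho e^{i\theta})|\le\frac{2\rho^2+4\rho}{1-\rho^2},
\]
and dividing by $\rho$ and integrating from $0$ to $t$ (with $\log|g'(0)|=0$) produces precisely the distortion bounds; this is (b).

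The growth theorem then follows from (b). The upper bound comes from integration along the radius: $|g(w)|\le\int_0^{t}|g'(se^{i\theta})|\,ds\le\int_0^{t}\frac{1+s}{(1-s)^{3}}\,ds=\frac{t}{(1-t)^{2}}$. For the lower bound, let $\gamma$ be the $g$-preimage of the line segment from $0$ to $g(w)$, a rectifiable curve in $\D$ joining $0$ to $w$; since $|\zeta|$ runs over all of $[0,t]$ along $\gamma$, integrating the lower bound of (b) along $\gamma$ and comparing with the increasing primitive $M(\rho):=\int_0^\rho\frac{1-s}{(1+s)^{3}}\,ds=\frac{\rho}{(1+\rho)^{2}}$ gives $|g(w)|=\int_\gamma|g'(\zeta)|\,|d\zeta|\ge M(t)=\frac{t}{(1+t)^{2}}$. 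This is (a), and rescaling back to $D(a,r)$ finishes the proof.

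I expect the only genuinely non-elementary step to be the Bieberbach bound $|a_2|\le2$, equivalently the area theorem; the rest is the affine bookkeeping of the first step together with integrating an elementary first-order differential inequality. Among the elementary steps, the subtlest is the lower bound in the growth theorem, where it is essential that the preimage curve $\gamma$ meets every circle about the origin of radius at most $|w|$, since this is what converts the pointwise lower bound on $|g'|$ from (b) into a lower bound for $|g(w)|=\int_\gamma|g'(\zeta)|\,|d\zeta|$.
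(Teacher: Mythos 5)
First, a remark on context: the paper does not prove this statement at all -- it is the classical Koebe growth/distortion theorem, quoted with a reference to Goluzin (Sections II.4 and IV.1 of \cite{MR0247039}) -- so your proposal is to be judged against the standard classical proof, which is indeed the route you take: affine renormalization to the class $\mathcal S$, the area theorem and $|a_2|\le 2$ via the odd square-root transform, the Koebe transform and the resulting differential inequality for $\partial_\rho\log|g'(\rho e^{i\theta})|$, integration to get (b), and then (a) from (b). All of that bookkeeping checks out (the second coefficient of your Koebe transform, the antiderivatives $\frac{t}{(1\mp t)^2}$ and $\log\frac{1\pm t}{(1\mp t)^3}$, and the reduction of the stated inequalities to the normalized ones with $t=|z-a|/r$ are all correct).

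There is, however, one genuine gap, and it sits exactly at the step you single out as ``subtlest'': in the lower growth bound you take $\gamma$ to be ``the $g$-preimage of the line segment from $0$ to $g(w)$, a rectifiable curve in $\mathbb{D}$ joining $0$ to $w$.'' This presupposes that the segment $[0,g(w)]$ is contained in $g(\mathbb{D})$, which is false in general: $g(\mathbb{D})$ need only contain $D(0,1/4)$, and for spiral-slit images the straight segment from $0$ to $g(w)$ can leave $g(\mathbb{D})$, in which case the preimage is not a curve joining $0$ to $w$ and the identity $|g(w)|=\int_\gamma|g'(\zeta)|\,|d\zeta|$ breaks down. (Your justification addresses only the other point, that $|\zeta|$ sweeps out $[0,t]$ along $\gamma$, which is fine once $\gamma$ exists.) The standard repair is short and uses only what you already have: since $t/(1+t)^2<1/4$ for $t<1$, the bound is trivial when $|g(w)|\ge 1/4$; and when $|g(w)|<1/4$ the Koebe one-quarter theorem -- itself an immediate consequence of $|a_2|\le 2$ applied to $w\mapsto \frac{c\,g(w)}{c-g(w)}$ for an omitted value $c$ -- guarantees $[0,g(w)]\subset D(0,1/4)\subset g(\mathbb{D})$, after which your integration argument goes through verbatim. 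With that case distinction inserted (and the routine check that the odd square-root transform is univalent on $|\zeta|>1$), the proof is complete and coincides with the classical one in the cited references.
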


\begin{thm}\label{thm:Koebe_argument} Let $F$ be a univalent function on the disk $D(a,r)$ for some $a\in\mathbb{C}$ and $r>0$.  Then

\item[(a)] For all $z\in D(a,r)$,

\[
\left| \arg\left(\dfrac{ F(z)-F(a) }{F'(a)(z-a)}\right) \right|\leqslant \log\left(\dfrac{r+|z-a|}{r-|z-a|}\right).
 \]
 
 \item[(b)]  For all $z\in D(a,r)$,

\[
\left| \arg\left(\dfrac{F'(z)}{F'(a)}\right) \right|\leqslant 2\log\left(\dfrac{r+|z-a|}{r-|z-a|}\right).
 \]

\end{thm}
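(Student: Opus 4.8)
The plan is to reduce to the classical normalized setting and then integrate a known pointwise estimate along radii, reading off the argument as the imaginary part of a small error term. First I would normalize: both $\frac{F(z)-F(a)}{F'(a)(z-a)}$ and $\frac{F'(z)}{F'(a)}$ are invariant under replacing $F$ by $w\mapsto\alpha F(w)+\beta$ (with $\alpha\neq 0$), and under replacing $F$ by $\zeta\mapsto F(a+r\zeta)$ together with the substitution $z=a+r\zeta$. Composing these reductions, one may assume $a=0$, $r=1$, $F(0)=0$ and $F'(0)=1$; writing $\rho:=|z|\in(0,1)$, and noting that $F(z)/z$ and $F'(z)$ are analytic, non-vanishing on $\mathbb{D}$, and equal $1$ at the origin (so that $\arg$ denotes the continuous branch vanishing at $0$), the two assertions become $\left|\arg\frac{F(z)}{z}\right|\le\log\frac{1+\rho}{1-\rho}$ and $\left|\arg F'(z)\right|\le 2\log\frac{1+\rho}{1-\rho}$.

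The inputs would be two classical estimates valid for such normalized univalent $F$ and all $z\in\mathbb{D}$:
\begin{equation*}
\left|\frac{F''(z)}{F'(z)}-\frac{2\bar z}{1-|z|^2}\right|\le\frac{4}{1-|z|^2},\qquad
\left|\frac{zF'(z)}{F(z)}-\frac{1+|z|^2}{1-|z|^2}\right|\le\frac{2|z|}{1-|z|^2},
\end{equation*}
the first of which follows quickly from the Bieberbach coefficient bound $|a_2|\le 2$ applied to the renormalized composition $z\mapsto\frac{F(\psi(z))-F(z_0)}{(1-|z_0|^2)F'(z_0)}$, $\psi$ the disk automorphism sending $0$ to $z_0$, the second being obtained similarly (both are in Sections~II.4 and~IV.1 of \cite{MR0247039}). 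For part~(b) I would write $\log F'(z)=\int_0^1 \frac{z\,F''(tz)}{F'(tz)}\,dt$, substitute the first estimate at the point $tz$, and note that the deterministic part contributes $\int_0^1\frac{2t\rho^2}{1-t^2\rho^2}\,dt=-\log(1-\rho^2)$, which is \emph{real}; hence $\arg F'(z)$ equals the imaginary part of the remaining error integral, whose modulus is at most $\int_0^1\frac{4\rho}{1-t^2\rho^2}\,dt=2\log\frac{1+\rho}{1-\rho}$. For part~(a) I would likewise write $\log\frac{F(z)}{z}=\int_0^1\frac1t\Bigl(\frac{tz\,F'(tz)}{F(tz)}-1\Bigr)dt$ (the integrand extends continuously across $t=0$), substitute the second estimate, again find that the deterministic part integrates to the real quantity $-\log(1-\rho^2)$, and bound the error integral by $\int_0^1\frac{2\rho}{1-t^2\rho^2}\,dt=\log\frac{1+\rho}{1-\rho}$. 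Undoing the normalization gives the stated inequalities.

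The only genuine content, beyond routine bookkeeping, is the pair of pointwise estimates displayed above, which rest on the area theorem / Bieberbach inequality; granting those, the mechanism that makes everything work is the observation that radial integration turns the deterministic part of each estimate into a purely real quantity, so that the argument we want to control is carried entirely by the (small) error term. The only points needing mild care are the continuity of the two integrands at $t=0$ and the legitimacy of the choices of branch of $\log$, both immediate from the normalization $F(0)=0$, $F'(0)=1$ and the univalence of $F$.
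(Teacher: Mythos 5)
Your argument for part (b) is fine: the pointwise estimate $\bigl|\tfrac{F''(z)}{F'(z)}-\tfrac{2\bar z}{1-|z|^2}\bigr|\le\tfrac{4}{1-|z|^2}$ is indeed the classical consequence of $|a_2|\le 2$ applied to the Koebe transform, and your radial integration (the deterministic part integrating to the real quantity $-\log(1-\rho^2)$, the error contributing at most $2\log\tfrac{1+\rho}{1-\rho}$) is a standard and correct derivation; since the paper only cites Grunsky/Goluzin for this theorem, that part is a legitimate self-contained proof.

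Part (a), however, rests on a false lemma. The second displayed estimate, $\bigl|\tfrac{zF'(z)}{F(z)}-\tfrac{1+|z|^2}{1-|z|^2}\bigr|\le\tfrac{2|z|}{1-|z|^2}$, cannot hold for all normalized univalent $F$: the disk $\overline{D}\bigl(\tfrac{1+r^2}{1-r^2},\tfrac{2r}{1-r^2}\bigr)$ lies in the right half-plane (its leftmost point is $\tfrac{1-r}{1+r}>0$), so if the estimate held at every $z\in\mathbb{D}$ it would force $\mathrm{Re}\,\tfrac{zF'(z)}{F(z)}>0$ throughout $\mathbb{D}$, i.e.\ it would force $F$ to be starlike. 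It is the Carath\'eodory-class estimate for functions of positive real part, valid when $F$ is starlike but violated by any univalent non-starlike $F$ (which exist: take a normalized Riemann map onto a simply connected domain that is not starlike about the origin; by the minimum principle $\mathrm{Re}\,\tfrac{zF'(z)}{F(z)}$ is then strictly negative somewhere). The correct classical input here is Grunsky's bound $\bigl|\log\tfrac{zF'(z)}{F(z)}\bigr|\le\log\tfrac{1+r}{1-r}$, but this does not rescue your radial-integration scheme: what the integration actually needs is $\bigl|\mathrm{Im}\,\tfrac{wF'(w)}{F(w)}\bigr|\le\tfrac{2|w|}{1-|w|^2}=\sinh\bigl(\log\tfrac{1+|w|}{1-|w|}\bigr)$, and since (again by Grunsky) the region of values of $\log\tfrac{wF'(w)}{F(w)}$ over the class $S$ is the full disk of radius $\log\tfrac{1+|w|}{1-|w|}$, the quantity $|\mathrm{Im}\,e^{\zeta}|$ on that disk exceeds the hyperbolic sine of its radius, so even this weaker pointwise bound fails. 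In other words, part (a) is not obtainable by integrating a true disk-type estimate of the kind you propose; the classical proofs (as in the sections of Goluzin cited by the paper) go through Grunsky's determination of the region of values of $\log\tfrac{F(z)-F(a)}{F'(a)(z-a)}$ (a disk with real center $\log\tfrac{1}{1-\rho^2}$ and radius $\log\tfrac{1+\rho}{1-\rho}$, whence the bound on the imaginary part), which requires tools beyond the Bieberbach coefficient inequality, e.g.\ the Goluzin/Grunsky inequalities or Loewner's method. Your citation of the second displayed estimate to Goluzin's Sections II.4 and IV.1 is therefore also inaccurate, and part (a) needs a genuinely different argument.
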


We remark that the proof of Theorem \ref{main_theorem} will depend, as already described in the Introduction, on the \emph{Folding Theorem} of Bishop \cite{Bis15}. The crucial application of \cite{Bis15} will be in extending a quasiregular function $g$ defined on a subset of $\mathbb{C}$ to all of $\mathbb{C}$, such that: \begin{itemize} \item[$\bullet$] $||g_{\overline{z}}/g_z||_{L^\infty(\mathbb{C})}$ is independent of a number of parameters described in Section \ref{disc_component_maps}, and \item[$\bullet$] the singular values of $g$ are well-understood as described in Theorem \ref{g_extension}. \end{itemize} The techniques of Bishop were applied in a similar manner in the constructions of \cite{Bis15}, \cite{MR3339086}, \cite{MR3579902}, \cite{2017arXiv171110629F}, where a detailing of the techniques of \cite{Bis15} are given as they apply to the construction of entire functions with wandering domains. Thus we will forego a detailed discussion of the Folding Theorem, citing it only in the proof of Theorem \ref{g_extension}. 

% with control on the dilatation constants, and understanding of the branched values of the extended map.  In the present work, the Folding Theorem is applied in Theorem \ref{g_extension}. The techniques of Bishop were applied in a similar manner in the constructions of \cite{Bis15}, \cite{MR3339086}, \cite{MR3579902}, \cite{2017arXiv171110629F}, where a detailing of the techniques of \cite{Bis15} are given as they apply to the construction of entire functions with wandering domains. Thus we will forego a detailed discussion of the Folding Theorem, citing it only in the proof of Theorem \ref{g_extension}. 

% in extending a quasiregular function defined on a subset of $\mathbb{C}$ to all of $\mathbb{C}$, such that the dilatation constant of the extended function is independent of a number of parameters which we will begin to describe in Section \ref{disc_component_maps}.

% In the present work, the Folding Theorem is applied in Theorem \ref{g_extension}. The techniques of Bishop were applied in a similar manner in the constructions of \cite{Bis15}, \cite{MR3339086}, \cite{MR3579902}, \cite{2017arXiv171110629F}, where a detailing of the techniques of \cite{Bis15} are given as they apply to the construction of entire functions with wandering domains. Thus we will forego a detailed discussion of the Folding Theorem, citing it only in the proof of Theorem \ref{g_extension}. 

%For expository reasons, we have chosen not to include a detailing of the techniques of Bishop as they apply to the present paper because this has been done at 

\section{Disc-Component Maps }
\label{disc_component_maps}

In this Section we describe a quasiregular function of the unit disc (see Figure \ref{fig:D-component_map}), depending on several parameters, that we will use in constructing the function $f$ of Theorem \ref{main_theorem}. The term \emph{Disc-Component} comes from \cite{Bis15}, though we will not need to make explicit use of this definition here. We begin with a description of the map $\psi$ as given in \cite{2017arXiv171110629F}. The map $\psi$ will be an interpolation between $z\mapsto z^m$ on $|z|=1$ with $z\mapsto z^m+\delta z$ on $r\mathbb{D}$, where $r<1$ and $\delta>0$. In order to interpolate we will make use of a standard smooth bump function:

\[ b(x)=\begin{cases} 
      \exp(1+\frac{1}{x^2-1}) & \textrm{ if } 0\leq x < 1 \\
      0 & \textrm{ if } x\geq 1.
   \end{cases}
\]
We use the transformation $\phi(x):=\frac{x-r}{1-r}$ in order to define the modified smooth bump function: 
\[ \hat{\eta}(x)=\begin{cases} 
      1 & \textrm{ if }x\leq r \\
      b(\phi(x)) & \textrm{ if } r \leq x \leq 1 \\
      0 & \textrm{ if }x\geq 1,
   \end{cases}
\]
and we define $\eta(z):=\hat{\eta}\left(|z|\right)$.

\begin{lem} \label{interpolation} Let $\psi(z):= z^m + \delta z \eta(z)$ for $z\in\mathbb{D}$ with $r:= 1-(4\delta)/m$. There exist $m_0\in\mathbb{N}$, $\delta_0>0$, and  $k_0<1$ such that if $m>m_0$ and $\delta<\delta_0$, then $r>(\delta/m)^{1/(m-1)}$ and $||\frac{\psi_{\overline{z}}}{\psi_z}||_{L^\infty(\mathbb{D})}<k_0$. 

\end{lem}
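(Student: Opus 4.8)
The plan is to analyze the two claimed inequalities separately, treating them as asymptotic statements as $m\to\infty$ and $\delta\to 0$. For the first inequality $r > (\delta/m)^{1/(m-1)}$, I would rewrite it with $r = 1 - 4\delta/m$ and observe that the right-hand side $(\delta/m)^{1/(m-1)}$ tends to $1$ as $m\to\infty$ (since $\log(\delta/m)/(m-1)\to 0$ uniformly for $\delta$ in a bounded range), while the left-hand side $1 - 4\delta/m$ also tends to $1$ but the \emph{gap} $4\delta/m$ from $1$ is comparatively large. More precisely, $(\delta/m)^{1/(m-1)} = \exp\!\big(\tfrac{\log\delta - \log m}{m-1}\big) \geq 1 + \tfrac{\log\delta - \log m}{m-1}$, so it suffices to show $1 - 4\delta/m > 1 + (\log\delta - \log m)/(m-1)$ for $m$ large and $\delta$ small, i.e.\ $(\log m - \log\delta)/(m-1) > 4\delta/m$; since $\log m$ grows while $4\delta/m$ shrinks, this holds once $m > m_0$ and $\delta < \delta_0$ (and one should also keep $\delta$ bounded below away from problematic ranges, or simply note the inequality is monotone-friendly in $\delta$ small). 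This part is essentially bookkeeping with elementary estimates on the exponential.

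The second inequality, the uniform bound on the Beltrami coefficient, is the substantive part. I would compute $\psi_z$ and $\psi_{\overline z}$ directly: writing $\psi(z) = z^m + \delta z\,\hat\eta(|z|)$ and using $\partial_z |z| = \overline z/(2|z|)$, $\partial_{\overline z}|z| = z/(2|z|)$, one gets
\begin{align*}
\psi_z &= m z^{m-1} + \delta\,\hat\eta(|z|) + \delta z\,\hat\eta'(|z|)\,\frac{\overline z}{2|z|},\\
\psi_{\overline z} &= \delta z\,\hat\eta'(|z|)\,\frac{z}{2|z|}.
\end{align*}
Thus $|\psi_{\overline z}| = \tfrac{\delta}{2}\,|z|\,|\hat\eta'(|z|)|$, which is supported in the annulus $r \leq |z| \leq 1$ and bounded by $\tfrac{\delta}{2}\|\hat\eta'\|_\infty \leq \tfrac{\delta}{2}\cdot\tfrac{C_0}{1-r} = \tfrac{\delta}{2}\cdot\tfrac{C_0 m}{4\delta} = \tfrac{C_0 m}{8}$, where $C_0 = \sup_x |b'(x)|$ is an absolute constant coming from the fixed bump function (the factor $1/(1-r)$ from the chain rule applied to $\phi(x) = (x-r)/(1-r)$). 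The point is then to show $|\psi_z|$ is at least a definite multiple of $m$ on this same annulus. On $r \leq |z| \leq 1$ we have $|z|^{m-1} \geq r^{m-1} = (1 - 4\delta/m)^{m-1}$, and for $\delta < \delta_0$ small this is bounded below by, say, $e^{-5\delta} \geq 1/2$; hence $|m z^{m-1}| \geq m/2$, while the remaining terms in $\psi_z$ are bounded by $\delta\|\hat\eta\|_\infty + \tfrac{\delta}{2}\|\hat\eta'\|_\infty \leq \delta + \tfrac{C_0 m}{8}$. I must be careful here: the term $\tfrac{C_0 m}{8}$ is itself of order $m$, so to conclude $|\psi_z| \geq m/4$ (say) I need $C_0/8 < 1/4$, i.e.\ $C_0 < 2$ — this is \emph{not} automatic and is the genuine obstacle.

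Therefore the real work is in choosing the interpolation so that the derivative of the bump is controlled relative to the main term; the resolution is that one has freedom in the constant "$4$" in $r = 1 - 4\delta/m$ — replacing it by a larger constant $A$ makes $1-r = A\delta/m$ larger, hence $\|\hat\eta'\|_\infty \leq C_0/(1-r) = C_0 m/(A\delta)$ and $|\psi_{\overline z}| \leq C_0 m /(2A)$, which can be made an arbitrarily small multiple of $m$ by taking $A$ large. With "$4$" as literally written one should check whether $C_0 = \sup|b'| < 2$ for the specific $b(x) = \exp(1 + 1/(x^2-1))$; a direct estimate of $b'(x) = b(x)\cdot(-2x/(x^2-1)^2)$ on $[0,1)$ shows $\sup|b'|$ is indeed a modest absolute constant (one can verify it numerically or bound it crudely), and if it exceeds $2$ one simply enlarges the constant in the definition of $r$ — which does not affect the first inequality's proof except to make it easier. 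Once $|\psi_{\overline z}|/|\psi_z| \leq \big(\tfrac{C_0 m}{8}\big)/\big(\tfrac{m}{4}\big) = \tfrac{C_0}{2} =: k_0 < 1$ is established on the support annulus (and $\psi_{\overline z} = 0$ off it, so the ratio is $0$ there), the lemma follows with $m_0, \delta_0$ chosen so all the auxiliary estimates above ($r^{m-1} \geq 1/2$, the first inequality, $\delta_0$ small enough that lower-order terms are dominated) hold simultaneously. I expect the main obstacle to be precisely this interplay: one must verify that the $O(m)$ contribution of $\delta z\,\hat\eta'$ to $\psi_z$ is strictly smaller than the $O(m)$ main term $m z^{m-1}$, which forces an honest (if elementary) bound on $\sup|b'|$ or an adjustment of the constant defining $r$.
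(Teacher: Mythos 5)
Your overall strategy is the right one and is essentially the argument the paper relies on (the paper does not prove the lemma itself; it cites Lemma 3.1 of [FJL19], whose proof is exactly this direct computation of $\psi_z$ and $\psi_{\overline z}$, with the dilatation supported in the annulus $r\le|z|\le 1$ and controlled by the dominance of $mz^{m-1}$). However, as written your proof does not close, for two concrete reasons. First, in the elementary part you reduce $1-4\delta/m>(\delta/m)^{1/(m-1)}$ to $1-4\delta/m>1+\frac{\log\delta-\log m}{m-1}$ via $e^x\ge 1+x$; this is the wrong direction, since you have only \emph{lower}-bounded the quantity you need to dominate, so the displayed condition is necessary rather than sufficient. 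The clean repair is to raise both sides to the power $m-1$ and use Bernoulli: $(1-4\delta/m)^{m-1}\ge 1-4\delta\frac{m-1}{m}>1-4\delta>\delta/m$ once $\delta_0\le 1/5$, valid for all $m\ge 2$.

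Second, and more seriously, your accounting for the Beltrami bound hinges on $C_0:=\sup_{[0,1)}|b'|<2$, and this is false: writing $u=1/(1-x^2)\ge 1$ one has $|b'(x)|=e\,e^{-u}\cdot 2x\,u^2$, whose maximum (at $3s^2-6s+2=0$ with $s=1-x^2$, i.e. $s\approx 0.42$) is about $2.17$. Your fallback of enlarging the constant $4$ in $r=1-4\delta/m$ is not available, because the lemma fixes that constant (and later parts of the paper, e.g. the bound $|\hat\eta'|\le e/(4\delta/m)$ quoted in the proof of Lemma \ref{first_dependence_on_parameters}, use it as stated). The gap is self-inflicted by your lossy intermediate choices: you threw away a factor of $2$ by settling for $r^{m-1}\ge 1/2$ and then demanding $|\psi_z|\ge m/4$. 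Keep instead the sharp estimate $r^{m-1}=(1-4\delta/m)^{m-1}\ge e^{-4\delta}(1-o(1))\ge 1-5\delta_0$ for $m>m_0$, so that $|\psi_z|\ge m(1-5\delta_0)-\delta-\tfrac{C_0m}{8}$ while $|\psi_{\overline z}|\le \tfrac{C_0 m}{8}$; then the dilatation is bounded by $\tfrac{C_0/8}{1-5\delta_0-\delta_0/m-C_0/8}$, which is a constant $k_0<1$ as soon as $C_0<4(1-5\delta_0)$. This does hold: crudely, $|b'|\le 2e\max_{u\ge1}u^2e^{-u}=8/e<3$ (or use the bound $C_0\le e$ implicit in the paper). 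With that adjustment your argument proves the lemma exactly as stated, with the constant $4$ untouched, and coincides with the cited proof.
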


%Lemma \ref{interpolation}, proven in \cite{2017arXiv171110629F}, gives a bound 

\noindent For the proof of Lemma \ref{interpolation}, see Lemma 3.1 of \cite{2017arXiv171110629F}. We note here that the critical points of $\psi$ are $\left(\frac{-\delta}{m}\right)^{\left(\frac{1}{m-1}\right)}$, and the critical values of $\psi$ are $\delta\left(\frac{-\delta}{m}\right)^{\left(\frac{1}{m-1}\right)}\left(\frac{m-1}{m}\right)$. We will use the notation $\psi^{\delta,m}$ when we wish to emphasize the dependence of the map $\psi$ on the parameters $\delta, m$. In order to later apply Theorem \ref{fixpoint}, we will need the following Lemma:  %Note that if $\delta'<\delta$, then $\textrm{supp}(\psi^{\delta',m}_{\overline{z}}) \subset \textrm{supp}(\psi^{\delta,m}_{\overline{z}}) $, and that one has $\textrm{area}(\textrm{supp}(\psi^{\delta,m}_{\overline{z}}))\rightarrow0$ as $m\rightarrow\infty$ uniformly over $\delta\in[0,\delta_0]$ (see also .

\begin{lem}\label{first_dependence_on_parameters}
Let $\psi(z):= z^m + \delta z \eta(z)$ as in Lemma \ref{interpolation}. The $L^\infty(\mathbb{D})$-valued map $\delta\mapsto\psi_{\overline{z}}/\psi_z$ is a continuous function of $\delta\in (0,\delta_0]$.
\end{lem}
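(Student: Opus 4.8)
The plan is to compute $\psi_{\overline z}$ and $\psi_z$ explicitly as functions of $z$ and $\delta$, and then invoke the standard fact that pointwise-in-$z$ continuity of these derivatives in $\delta$, together with a uniform bound away from $1$ on the Beltrami coefficient (supplied by Lemma~\ref{interpolation}), upgrades to continuity in the $L^\infty$ norm. Write $\psi(z) = z^m + \delta z \eta(z)$ with $\eta(z) = \hat\eta(|z|)$. Since $|z|$ is not holomorphic, the term $z\eta(z)$ contributes to both Wirtinger derivatives: using $\partial_z |z| = \overline z/(2|z|)$ and $\partial_{\overline z}|z| = z/(2|z|)$, one gets
\[
\psi_z = m z^{m-1} + \delta\,\eta(z) + \delta z\,\hat\eta\,'(|z|)\,\frac{\overline z}{2|z|}, \qquad
\psi_{\overline z} = \delta z\,\hat\eta\,'(|z|)\,\frac{z}{2|z|},
\]
valid for $z\neq 0$ (and both reduce to $\psi_z = mz^{m-1}+\delta$, $\psi_{\overline z}=0$ near $0$, where $\hat\eta$ is constant). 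For each fixed $z\in\mathbb D$, both of these are manifestly affine — hence continuous — in $\delta$; moreover on the critical radius $r = 1-(4\delta)/m$ the cutoff $\hat\eta$ depends on $\delta$ too, but $\hat\eta(|z|) = b\!\left(\tfrac{|z|-r}{1-r}\right)$ with $b$ smooth and $r$ continuous in $\delta$, so the composite is still continuous in $\delta$ for each fixed $z$. Thus $\delta \mapsto \psi_{\overline z}/\psi_z$ is continuous pointwise a.e. in $z$.

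To pass from pointwise to $L^\infty$ continuity, fix $\delta_1\in(0,\delta_0]$ and a sequence $\delta_j\to\delta_1$ in $(0,\delta_0]$; I want $\|\mu_{\delta_j}-\mu_{\delta_1}\|_{L^\infty(\mathbb D)}\to 0$, where $\mu_\delta := \psi^{\delta,m}_{\overline z}/\psi^{\delta,m}_z$. The key point is that the formulas above exhibit $\mu_\delta(z)$ as $\Psi(z,\delta)$ for a function $\Psi$ that is \emph{jointly continuous} on $\overline{\mathbb D}\times[\tfrac{\delta_1}{2},\delta_0]$ (the only place to worry is a neighborhood of $z=0$ and the annulus near $|z|=r$, and on both regions the expressions are built from continuous functions of $(z,\delta)$ — note $\mu_\delta\equiv 0$ near $0$, so there is no genuine singularity there once $\delta$ is bounded below). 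A jointly continuous function on the compact set $\overline{\mathbb D}\times[\tfrac{\delta_1}{2},\delta_0]$ is uniformly continuous, so $\sup_{z}|\Psi(z,\delta_j)-\Psi(z,\delta_1)|\to 0$, which is exactly $L^\infty$-convergence of the Beltrami coefficients. (Equivalently, one can argue by contradiction: if not, extract $z_j$ with $|\mu_{\delta_j}(z_j)-\mu_{\delta_1}(z_j)|\geq\varepsilon$, pass to a convergent subsequence $z_j\to z_*$, and use joint continuity to reach a contradiction.)

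The main obstacle — really the only subtlety — is that the radius $r=r(\delta)=1-(4\delta)/m$ of the interpolation annulus moves with $\delta$, so one is not simply fixing a domain decomposition and reading off affine dependence; the support of $\hat\eta\,'$ slides. This is handled by observing that $r(\delta)$ is itself continuous (indeed affine) in $\delta$ and that $b$ is globally smooth with $b\equiv 0$ on $[1,\infty)$, so $z\mapsto \hat\eta(|z|)$ and $z\mapsto\hat\eta\,'(|z|)$ depend continuously on $(z,\delta)$ jointly — no jump appears at the moving endpoints because $b$ and $b'$ vanish to infinite order there. One should also record that Lemma~\ref{interpolation} guarantees $\|\mu_\delta\|_{L^\infty}<k_0<1$ uniformly for $\delta<\delta_0$, which is what makes the resulting Beltrami coefficients admissible (and is the hypothesis needed if one later wants to feed this into Theorem~\ref{parameters}); but for the bare statement of the present Lemma, joint continuity of $\Psi$ on the compact parameter interval is the whole story.
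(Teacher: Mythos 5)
Your proposal is correct and follows essentially the same route as the paper's proof: compute $\psi_z$ and $\psi_{\overline z}$ explicitly, observe that the $\delta$-dependent cutoff $\hat\eta_\delta$ and its radial derivative vary continuously despite the moving junction (no trouble there because $b(0)=1$, $b'(0)=0$, and $b$ vanishes to infinite order at $1$), and upgrade pointwise convergence in $\delta$ to $L^\infty(\mathbb{D})$ convergence. The only cosmetic difference is the uniformity mechanism --- you use joint continuity of $(z,\delta)\mapsto\mu_\delta(z)$ on a compact product set, whereas the paper uses equicontinuity of the family $\hat\eta_{\delta_n}$ via the derivative bound $|\hat\eta_{\delta_n}'|\le em/(4\delta_n)$ --- and, like the paper, you leave implicit the lower bound on $|\psi_z|$ on the interpolation annulus (available from Lemma \ref{interpolation}) that justifies passing from the numerator and denominator to their quotient.
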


\begin{proof} We first compute:

\begin{equation}\label{dilatation_expression} \psi_z(z)=mz^{m-1}+\delta\eta(z)+\delta z\eta_z(z)\textrm{, } \psi_{\overline{z}}(z)=\delta z\eta_{\overline{z}}(z). \end{equation}

\noindent Note that $\eta$ depends on a choice of $\delta$. Indeed, unraveling the definition, we have:

%\[ \eta_\delta(|z|)=\begin{cases} 
%      1 & \textrm{ if }|z|\leq 1-(4\delta)/m \\
%      \exp\left( 1 + \frac{1}{\left(\frac{|z|-(1-(4\delta)/m)}{1-(1-(4\delta)/m)}\right)^2-1} \right) & \textrm{ if } 1-(4\delta)/m \leq |z| \leq 1, \\
 %  \end{cases}
%\]

\[ \eta_\delta(z) = \mathds{1}_{[0,1-(4\delta)/m]}(|z|) + \mathds{1}_{[1-(4\delta)/m,1]}(|z|)\cdot\exp\left( 1 + \frac{1}{\left(\frac{|z|-(1-(4\delta)/m)}{1-(1-(4\delta)/m)}\right)^2-1} \right), \] 

\noindent where we have used the notation $\eta_\delta$ to emphasize the dependence on $\delta$. Given $\delta\in(0,\delta_0]$ and $(\delta_n)_{n=1}^\infty \in (0,\delta_0]$ such that $\delta_n\rightarrow\delta$, we claim that $\eta_{\delta_n}(z) \rightarrow \eta_{\delta}(z)$ as $n\rightarrow\infty$ uniformly over $z\in\mathbb{D}$. First observe that $\eta_{\delta_n}(z) \rightarrow \eta_{\delta}(z)$ as $n\rightarrow\infty$ pointwise over $z\in\mathbb{D}$. Since, moreover, the functions $(\eta_{\delta_n})_{n=1}^\infty$ are equicontinuous (this follows from the mean value theorem together with the derivative bound $|\hat{\eta}_{\delta_n}'(x)|\leq e/(4\delta_n/m)\leq e/(4(\inf_n \delta_n)/m)$ for $x\geq0$ as observed in the proof of Lemma \ref{interpolation} in \cite{2017arXiv171110629F}), the convergence  $\eta_{\delta_n} \rightarrow \eta_{\delta}$ is uniform (see, for instance, Exercise 7.16 of \cite{MR0385023}). Similar considerations yield that $(\eta_{\delta_n})_z\rightarrow(\eta_{\delta})_z$ in $L^{\infty}(\mathbb{D})$, and $(\eta_{\delta_n})_{\overline{z}}\rightarrow(\eta_{\delta})_{\overline{z}}$ in $L^{\infty}(\mathbb{D})$. The result then follows from (\ref{dilatation_expression}).

%Since $\eta_z(z)=\hat{\eta}'(|z|)\cdot\overline{z}/2|z|$ and $\eta_{\overline{z}}(z)=\hat{\eta}'(|z|)\cdot z/2|z|$, 

%We consider four cases, assuming without loss of generality that $\delta_n<\delta$ over all $n$. First, if $|z|<1-(4\delta_n)/m$, then $\eta_{\delta_n}(z)=\eta_{\delta}(z)=1$. Secondly, If $1-(4\delta_n)/m<|z|<1-(4\delta)/m$, then $\eta_\delta(z)=1$, and 

%\[ 1 \geq \eta_{\delta_n}(z) = \exp\left( 1 + \frac{1}{\left(\frac{|z|-(1-(4\delta_n)/m)}{1-(1-(4\delta_n)/m)}\right)^2-1} \right) \geq \exp\left( 1 + \frac{1}{\left(\frac{4(\delta_n-\delta)/m}{1-(1-(4\delta_n)/m)}\right)^2-1} \right) \xrightarrow{n\rightarrow\infty} 1. \]

%\noindent Thirdly, $\eta_{\delta_n}(z)\rightarrow 0$, $\eta_{\delta}(z)\rightarrow0$ as $|z|\rightarrow1$   if $1\geq|z|\geq$

%\noindent Lastly, if $1\geq|z|>1-(4\delta)/m$, then two applications of the mean-value theorem yield

%\[ \left| \eta_{\delta_n}(z) - \eta_{\delta}(z) \right| \leq  \left| \frac{1}{\left(\frac{|z|-(1-(4\delta_n)/m)}{1-(1-(4\delta_n)/m)}\right)^2-1} - \frac{1}{\left(\frac{|z|-(1-(4\delta)/m)}{1-(1-(4\delta)/m)}\right)^2-1}  \right| \] 

\end{proof}

\begin{rem} The $L^\infty(\mathbb{D})$-valued map $\delta\mapsto\psi_{\overline{z}}/\psi_z$ is \emph{not} an analytic function of $\delta$, as the reader may verify. Thus \emph{analytic} dependence on parameters will not be employable in the proof of Theorem \ref{main_theorem}, but continuous dependence on parameters (Theorem \ref{parameters}) will suffice. %The author would be interested to know whether one can modify the map $\psi$ as in Lemma \ref{interpolation} so that $\psi_{\overline{z}}/\psi_z$ depends analytically on the parameter $\delta$, and $\textrm{area}(\textrm{supp}(\psi^{\delta,m}_{\overline{z}}))\rightarrow0$ as $m\rightarrow\infty$ uniformly over $\delta\in[0,\delta_0]$. 
\end{rem}

Next we define a quasiconformal map $\beta$ whose purpose it will be to perturb the critical values of the map $\psi$ defined above. Enumerate, counter-clockwise, the $m^{\textrm{th}}$ roots of $-1$ as $\xi_1, ..., \xi_m$, where we assume $m$ is odd and take $\xi_1=\exp(\pi i/m)$. We define, for $\varepsilon>0$, the following subset of $\mathbb{C}^m$: 

\begin{equation}\label{convex_set} E_{\varepsilon} := \left\{ (r_1, ..., r_m) \in \prod_{j=1}^m \exp \left( \overline{D}(0,\varepsilon) \right) : \frac{r_{j+1}\xi_{j+1}-r_j\xi_j}{\xi_{j+1}-\xi_{j}} \in \exp \left( \overline{D}(0,\varepsilon) \right) \textrm{, } 1 \leq j \leq m  \right\},  \end{equation}

\noindent where we understand that $\xi_{m+1}:=\xi_1$ and $r_{m+1}=r_1$. The set $E_{\varepsilon}$ also depends on $m$, but we suppress it from the notation since the value of $m$ will always be understood from the context.

We define, for $R>1$, $m\in\mathbb{N}$, $\varepsilon>0$ and $(r_j)_{j=1}^m \in E_{\varepsilon}$, a map $\beta_{R, m, \varepsilon, (r_j)_{j=1}^m}$ on a subset of $\mathbb{C}$:

\begin{equation}\label{definition_of_beta} \beta_{R, m, \varepsilon, (r_j)_{j=1}^m}(z)=\begin{cases} 
      z & \textrm{ if }|z|\geq R, \\
      z & \textrm{ if }|z|\leq R^{-1}, \\
      r_1z & \textrm{ if } z = \xi_1, \\      
      \vdots \\
      r_mz & \textrm{ if } z = \xi_m. \\ 
   \end{cases}
\end{equation}

\noindent We will usually use the notation $\beta$, with the implicit dependence on parameters $R, m, \varepsilon, (r_j)_{j=1}^m$ understood. We will sometimes abbreviate $(r_j)$ in place of $(r_j)_{j=1}^m$, when $m$ is clear from the context. Our goal is to extend $\beta$ to a quasiconformal map of the complex plane whose dilatation has an upper bound which is essentially independent of $m\in\mathbb{N}$, $R>1$ and $(r_j) \in E_{\varepsilon}$, provided $\varepsilon>0$ is sufficiently small depending only on $R$ and not on $m$. This is formulated precisely and proven in Proposition  \ref{beta_extension} below, but we will first need to record the following preliminary computation:

\begin{lem}\label{dilatation_calculation} Let $T_z, T_w$ be triangles with vertices $z_1, z_2, z_3$ and $w_1, w_2, w_3=z_3$, respectively, as shown in Figure \ref{fig:dilatation_calculation} with $z_1,z_2 \in i\mathbb{R}$ and $\emph{Im}(z_1)=\emph{Im}(z_3)$. The affine map $L(z)$ sending $z_1, z_2, z_3$ to $w_1, w_2, w_3$, respectively,  satisfies

\begin{equation}\label{dilatation_calculation_formula} \left\lVert \frac{L_{\overline{z}}}{L_z} \right\rVert_{L^\infty(\mathbb{C})}< \dfrac{\left|\frac{z_1-w_1}{z_3-z_1}\right| + \left| 1- \frac{w_2-w_1}{z_2-z_1} \right| }{2-\left|\frac{z_1-w_1}{z_3-z_1}\right|-\left| 1- \frac{w_2-w_1}{z_2-z_1} \right|} \end{equation}

\end{lem}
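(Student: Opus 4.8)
The plan is to reduce the bound on the Beltrami coefficient of the affine map $L$ to a direct algebraic computation using the standard fact that for an $\R$-linear map $z\mapsto az+b\bar z$ one has complex dilatation $b/a$, hence $\|L_{\bar z}/L_z\|_\infty = |b/a|$. So first I would write $L(z)=az+b\bar z+c$ for complex constants $a,b,c$ determined by the three interpolation conditions $L(z_j)=w_j$, $j=1,2,3$. Subtracting the equation at $z_1$ from the others eliminates $c$, giving the linear system
\begin{align}
a(z_2-z_1)+b(\bar z_2-\bar z_1) &= w_2-w_1,\\
a(z_3-z_1)+b(\bar z_3-\bar z_1) &= w_3-w_1.
\end{align}
Because $w_3=z_3$, the second equation reads $a(z_3-z_1)+b(\overline{z_3-z_1}) = z_3-z_1$, i.e. $a+b\cdot\overline{(z_3-z_1)}/(z_3-z_1) = 1$. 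The geometric hypotheses $z_1,z_2\in i\R$ and $\mathrm{Im}(z_1)=\mathrm{Im}(z_3)$ are exactly what make $z_2-z_1$ purely imaginary and $z_3-z_1$ real (so $\overline{z_3-z_1}/(z_3-z_1)=1$), which simplifies the two normalizing ratios appearing on the right-hand side of \eqref{dilatation_calculation_formula}.

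Next I would solve the $2\times2$ system for $a$ and $b$. From the second equation $a+b=1$ (using $z_3-z_1$ real), so $a=1-b$. Substituting into the first equation and dividing by $z_2-z_1$: since $z_2-z_1$ is purely imaginary, $(\bar z_2-\bar z_1)/(z_2-z_1)=-1$, so the first equation becomes $a - b = (w_2-w_1)/(z_2-z_1)$. Combining $a+b=1$ and $a-b=(w_2-w_1)/(z_2-z_1)$ gives $2b = 1 - (w_2-w_1)/(z_2-z_1)$, i.e.
\[
b = \frac{1}{2}\left(1 - \frac{w_2-w_1}{z_2-z_1}\right),\qquad a = \frac{1}{2}\left(1+\frac{w_2-w_1}{z_2-z_1}\right).
\]
Then $|L_{\bar z}/L_z| = |b|/|a|$. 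Writing $t:=1-(w_2-w_1)/(z_2-z_1)$, we have $|b|=|t|/2$ and $|a|=|2-t|/2\ge 1-|t|/2$ provided $|t|<2$ — but this only involves the single quantity $|1-(w_2-w_1)/(z_2-z_1)|$, whereas the claimed bound also features $|(z_1-w_1)/(z_3-z_1)|$. This tells me my naive reading of the hypotheses must be incomplete: the term $|(z_1-w_1)/(z_3-z_1)|$ must enter because $w_1\ne z_1$ in general, so the second equation is $a+b = (z_3-w_1)/(z_3-z_1) = 1 - (w_1-z_1)/(z_3-z_1)$, not $a+b=1$. Redoing the algebra with $s := (z_1-w_1)/(z_3-z_1)$ and $t := 1-(w_2-w_1)/(z_2-z_1)$, the system is $a+b = 1+s$, $a-b = 1-t$, giving $b=\tfrac12(s+t)$, $a=\tfrac12(2+s-t)$, whence
\[
\left\lVert\frac{L_{\bar z}}{L_z}\right\rVert_\infty = \frac{|s+t|}{|2+s-t|} \le \frac{|s|+|t|}{2-|s|-|t|},
\]
which is exactly \eqref{dilatation_calculation_formula}; the final inequality uses the triangle inequality upstairs and $|2+s-t|\ge 2-|s|-|t|$ downstairs, and the strictness comes from the strict inequality in the denominator estimate (valid whenever $|s|+|t|<2$, which is implicitly assumed since otherwise the right-hand side is not meaningful as an upper bound $<1$).

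The main obstacle I anticipate is purely bookkeeping: correctly extracting from the figure and the stated hypotheses ($z_1,z_2\in i\R$, $\mathrm{Im}(z_1)=\mathrm{Im}(z_3)$, $w_3=z_3$) precisely which differences are real and which are purely imaginary, so that the conjugate ratios $(\bar z_2-\bar z_1)/(z_2-z_1)$ and $(\bar z_3-\bar z_1)/(z_3-z_1)$ collapse to $-1$ and $+1$ respectively. Once that is pinned down, everything reduces to solving a $2\times 2$ linear system and one application of the triangle inequality, with no analysis involved. I would present the computation of $a,b$ cleanly, note that $\|L_{\bar z}/L_z\|_\infty=|b|/|a|$ is constant (an affine map has constant dilatation), and then do the one-line estimate to reach \eqref{dilatation_calculation_formula}.
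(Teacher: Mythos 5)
Your proposal is correct and follows essentially the same route as the paper: solve for the coefficients of the affine map $z\mapsto az+b\overline{z}+c$ (the hypotheses making $z_2-z_1$ purely imaginary and $z_3-z_1$ real, so that $a$ and $b$ come out as the half-sum and half-difference of $\frac{z_3-w_1}{z_3-z_1}$ and $\frac{w_2-w_1}{z_2-z_1}$), and then bound $|b/a|$ by the triangle inequality in numerator and denominator. Your substitution $s=\frac{z_1-w_1}{z_3-z_1}$, $t=1-\frac{w_2-w_1}{z_2-z_1}$ is just a notational repackaging of the paper's coefficients, so there is nothing genuinely different to compare.
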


\begin{proof} It suffices to bound the dilatation of the affine map $z\mapsto az+b\overline{z}+c$ sending the translate of $T_z$ by $-z_1$ to the translate of $T_w$ by $-z_1$. The coefficients $a,b,c$ are given by

\begin{equation}\label{dilatation_expression_continuity} a=\frac{1}{2}\left[ \frac{z_3-w_1}{z_3-z_1} + \frac{w_2-w_1}{z_2-z_1} \right]\textrm{,   } b=\frac{1}{2}\left[ \frac{z_3-w_1}{z_3-z_1} - \frac{w_2-w_1}{z_2-z_1} \right]\textrm{,    } c=w_1-z_1, \end{equation}

\noindent whence the inequality (\ref{dilatation_calculation_formula}) follows from applying the triangle inequality to both numerator and denominator of $|b/a|$.

\end{proof}

\begin{figure}
% Use the relevant command to insert your figure file.
% For example, with the graphicx package use
\centering
\scalebox{.6}{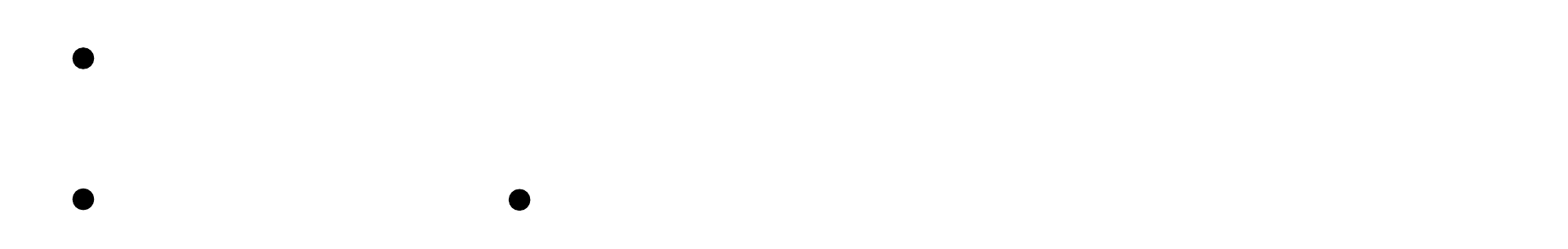}
% figure caption is below the figure
\caption{ In order to calculate the dilatation of the map $\beta$ it will be convenient to have an expression for the dilatation of the affine map sending the triangle $T_z$ to $T_w$ as in Lemma \ref{dilatation_calculation}.  }
\label{fig:dilatation_calculation}       % Give a unique label
\end{figure}

\begin{prop}\label{beta_extension} There exist constants $k_1<1$, $m_0'\in\mathbb{N}$ and $n_0\in\mathbb{N}$ such that if $1\leq R<3/2$, $m>m_0'$, and $(r_j)_{j=1}^m \in E_\varepsilon$ with $\varepsilon:=\log(\hspace{-1mm}\sqrt[\leftroot{-1}\uproot{2}n_0]{R})$, then the map $\beta=\beta_{R, m, \varepsilon, (r_j)_{j=1}^m}$ defined in (\ref{definition_of_beta}) may be extended to a quasiconformal map $\beta:\mathbb{C}\rightarrow\mathbb{C}$ such that 

\[ \left\lVert \frac{\beta_{\overline{z}}}{\beta_z} \right\rVert_{L^\infty(\mathbb{C})} < k_1. \]

\noindent Moreover, the $L^\infty(\mathbb{C})$-valued map $(r_j)_{j=1}^m\mapsto\beta_{\overline{z}}/\beta_z$ is continuous as a function of $(r_j)_{j=1}^m \in E_{\varepsilon}$. 

%\begin{rem} This is a notational remark. There are now two constants which are denoted by $k_0$: one occurs in the statement of Lemma \ref{interpolation}, and one in the statement of Proposition \ref{beta_extension}. By redefining $k_0$ to be the maximum of these two constants, one trivially has that $k_0$ still satisfies the statements of both Lemma \ref{interpolation} and Proposition \ref{beta_extension}. A similar situation applies for $m_0$. We remark, without further comment, that any further occurrences of the constants $k_0$, $m_0$ are always taken to be greater than any previous instances.  This slight abuse of notation will make the exposition significantly clearer. 
%\end{rem}

\end{prop}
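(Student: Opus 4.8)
## Proof Proposal for Proposition \ref{beta_extension}

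The plan is to construct the extension of $\beta$ explicitly by a piecewise-affine (or piecewise-affine plus identity) interpolation on a triangulation of the annulus $R^{-1}\leq|z|\leq R$, and then read off all the required bounds from Lemma \ref{dilatation_calculation}. First I would fix a large $n_0$ and set $\varepsilon = \log(\sqrt[n_0]{R})$, so that $\exp(\overline{D}(0,\varepsilon))$ is a tiny annulus whose ``thickness'' is controlled by $R^{1/n_0}$; this is the mechanism that lets $\varepsilon$ depend only on $R$. I would then introduce, for each $j$, intermediate radii and the rays through the $m$-th roots of $-1$, subdividing the closed annulus $\{R^{-1}\le |z|\le R\}$ into a union of quadrilaterals, each of which I further cut into two triangles of the type appearing in Figure \ref{fig:dilatation_calculation} (after applying a rotation that sends the relevant pair of $\xi_j$'s to the imaginary axis, so Lemma \ref{dilatation_calculation} applies verbatim). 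On the two ``caps'' $\{|z| = R\}$ and $\{|z| = R^{-1}\}$ the map is the identity, on the circle $\{|z|=1\}$ it is $z\mapsto r_j z$ at the vertices $\xi_j$ (interpolated affinely along each arc), and $\beta$ is defined to be the unique affine map on each triangle agreeing with these boundary values at the three vertices.

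The core estimate is then: on each triangle, apply Lemma \ref{dilatation_calculation}. The two quantities $\left|\frac{z_1-w_1}{z_3-z_1}\right|$ and $\left|1-\frac{w_2-w_1}{z_2-z_1}\right|$ measure, respectively, how far a vertex is displaced relative to the (fixed) radial size of the triangle, and how much the map distorts one edge. Because each $r_j$ lies in $\exp(\overline{D}(0,\varepsilon))$ and, crucially, the ratios $\frac{r_{j+1}\xi_{j+1}-r_j\xi_j}{\xi_{j+1}-\xi_j}$ also lie in $\exp(\overline{D}(0,\varepsilon))$ by the very definition (\ref{convex_set}) of $E_\varepsilon$, both quantities are bounded by something like $C(e^\varepsilon - 1) = C(R^{1/n_0}-1)$, uniformly in $j$ and in $m$ (the $m$-independence is where I must be careful: the number of triangles grows with $m$, but each individual triangle's distortion does not, because the vertex displacements scale with the edge lengths). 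Choosing $n_0$ large makes $R^{1/n_0}-1$ small enough — uniformly over $1\le R<3/2$ — that formula (\ref{dilatation_calculation_formula}) yields a bound $k_1<1$ independent of $R$, $m$, and $(r_j)$. Outside the annulus $\beta$ is the identity, hence holomorphic, so the global $L^\infty$ bound on $\beta_{\overline z}/\beta_z$ is $k_1$. I also need to confirm the extension is a genuine homeomorphism (orientation-preserving on each triangle, vertices in convex position, no folding) — again this follows once $\varepsilon$ is small, since $\beta$ is then a small perturbation of the identity on the annulus. A constant $m_0'$ enters to guarantee $m$ is large enough that the triangulation is non-degenerate (the arcs between consecutive $\xi_j$ are short enough for the triangle geometry of Figure \ref{fig:dilatation_calculation} to be valid) and that $m$ odd with $\xi_1 = e^{\pi i/m}$ behaves as assumed.

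For the final continuity assertion, I would hold $R$, $m$, $\varepsilon$ fixed and let $(r_j)_{j=1}^m$ vary in $E_\varepsilon$. On each triangle of the (fixed) triangulation, the affine map $z\mapsto az+b\overline z + c$ has coefficients given by (\ref{dilatation_expression_continuity}), which are manifestly continuous (indeed rational, with non-vanishing denominators) functions of the vertex data $w_1,w_2$, hence of $(r_j)$; therefore $b/a$ — which equals $\beta_{\overline z}/\beta_z$ a.e. on that triangle — depends continuously on $(r_j)$, and since $|b/a|<k_1<1$ uniformly, the dependence is continuous in $L^\infty$ on each triangle. As there are finitely many triangles (for fixed $m$) and $\beta_{\overline z}/\beta_z\equiv 0$ off the annulus, patching gives continuity of $(r_j)_{j=1}^m\mapsto\beta_{\overline z}/\beta_z$ as an $L^\infty(\mathbb{C})$-valued map. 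The main obstacle is the first part — rigging the triangulation and the parameter $\varepsilon=\log(\sqrt[n_0]{R})$ so that the per-triangle distortion bound from Lemma \ref{dilatation_calculation} is simultaneously uniform in $m$ (infinitely many triangles) and in $R\in[1,3/2)$; the continuity statement, by contrast, is essentially immediate from the explicit formula (\ref{dilatation_expression_continuity}) once the construction is in place.
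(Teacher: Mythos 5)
Your overall strategy (piecewise-affine interpolation over a triangulation, dilatation control via Lemma \ref{dilatation_calculation}, continuity from (\ref{dilatation_expression_continuity})) is the right one, but note that you triangulate the round annulus directly, whereas the paper performs the interpolation in the logarithmic covering strip $\{|\mathrm{Re}(z)|<\log R\}$ and descends via $\exp$. Your route does have a real advantage: in the $z$-plane the second condition in (\ref{convex_set}) is literally the statement that the chordwise edge ratio $(r_{j+1}\xi_{j+1}-r_j\xi_j)/(\xi_{j+1}-\xi_j)$ lies in $\exp(\overline{D}(0,\varepsilon))$, and the radial edge ratio deviates from $1$ by $|r_j-1|/(R-1)\le (e^\varepsilon-1)/(R-1)\le e^{\varepsilon}/n_0$, so both deviations are $O(1/n_0)$ uniformly in $m$, $j$ and $1\le R<3/2$ with no appeal to Theorems \ref{thm:Koebe} and \ref{thm:Koebe_argument}; the paper needs those distortion theorems only to transport the $E_\varepsilon$ conditions into log coordinates. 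Two smaller inaccuracies: the correct bound for the radial term is $(e^\varepsilon-1)/(R-1)$, not $C(e^\varepsilon-1)$ (this is exactly your parenthetical remark that displacements scale with edge lengths, so make it quantitative); and no rotation makes Lemma \ref{dilatation_calculation} apply verbatim, since the angle of your triangles at $\xi_j$ is $\pi/2\pm\pi/m$ rather than $\pi/2$, so you need the general-triangle version of the computation, in which a harmless factor $\sin\theta$ with $\theta\to\pi/2$ appears for large $m$.

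The genuine gap is at the boundary circles. Straight-edged triangles with vertices on $|z|=R^{-1}$ necessarily contain the thin circular segments between each chord $[R^{-1}\xi_j,R^{-1}\xi_{j+1}]$ and the corresponding arc; these segments lie in $\{|z|\le R^{-1}\}$, where (\ref{definition_of_beta}) demands $\beta(z)=z$, yet the affine maps covering them fix only the chord and move $\xi_j$ to $r_j\xi_j$, so they are not the identity there. Hence, as described, your map is not an extension of $\beta$, and it also disagrees with your own declaration that $\beta$ is the identity off the annulus. The obvious repairs break the required uniformity: repositioning the inner ring of vertices so that the inner polygon contains $\overline{D}(0,R^{-1})$ while staying inside the unit circle forces $\cos(\pi/m)>R^{-1}$, i.e.\ $m\gtrsim (R-1)^{-1/2}$, whereas in Proposition \ref{beta_extension} the threshold $m_0'$ is fixed before $R$ and must work for $R$ arbitrarily close to $1$; replacing chords by arcs destroys the affine structure and requires a new dilatation estimate on curvilinear pieces. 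This is precisely what the paper's passage to the covering strip buys: the circles $|z|=R^{\pm1}$ become the vertical lines $\mathrm{Re}(z)=\mp\log R$, which are unions of triangulation edges, so the identity boundary condition holds exactly and the construction is uniform in $R$ and $m$. Your continuity argument for $(r_j)_{j=1}^m\mapsto\beta_{\overline{z}}/\beta_z$ is fine and matches the paper's.
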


%\begin{rem} It is likely that one can prove Proposition \ref{beta_extension} by defining $\beta(z)=z+\eta(z)h(z)$ for $\eta$ as in Lemma \ref{interpolation} and a holomorphic function $h:\mathbb{C}\rightarrow\mathbb{C}$ such that $h(\xi_j)=r_j\xi_j$ over all $j$. We opt instead for a more geometric approach, illustrated in Figure \ref{fig:beta_extension}.

%\end{rem}

\begin{proof} See Figure \ref{fig:beta_extension}: we define a $2\pi i$-periodic, piecewise-linear map $\hat\beta$ in the covering space $|\textrm{Re}(z)|<\log r$ of $R^{-1}<|z|<R$ such that $\hat\beta$ descends to an extension of the map $\beta$ with the desired properties. The definition is also illustrated in Figure  \ref{fig:beta_extension}. There are two triangulations of $|\textrm{Re}(z)|<\log R$ shown: the left-hand side is triangulated with vertices in $(\log(R^{-1} \xi_j) )_{j=1}^m$, $(\log(\xi_j) )_{j=1}^m$, $(\log(R \xi_j) )_{j=1}^m$ whereas the right-hand side is triangulated with vertices in $(\log(R^{-1} \xi_j) )_{j=1}^m$, $(\log(r_j\xi_j) )_{j=1}^m$, $(\log(R \xi_j) )_{j=1}^m$. The map $\hat\beta$ is defined piecewise: in each triangle on the left-hand side of Figure \ref{fig:beta_extension}, $\hat\beta$ is the affine map to the corresponding triangle on the right-hand side of Figure \ref{fig:beta_extension}.

\begin{figure}
% Use the relevant command to insert your figure file.
% For example, with the graphicx package use
\centering
\scalebox{.5}{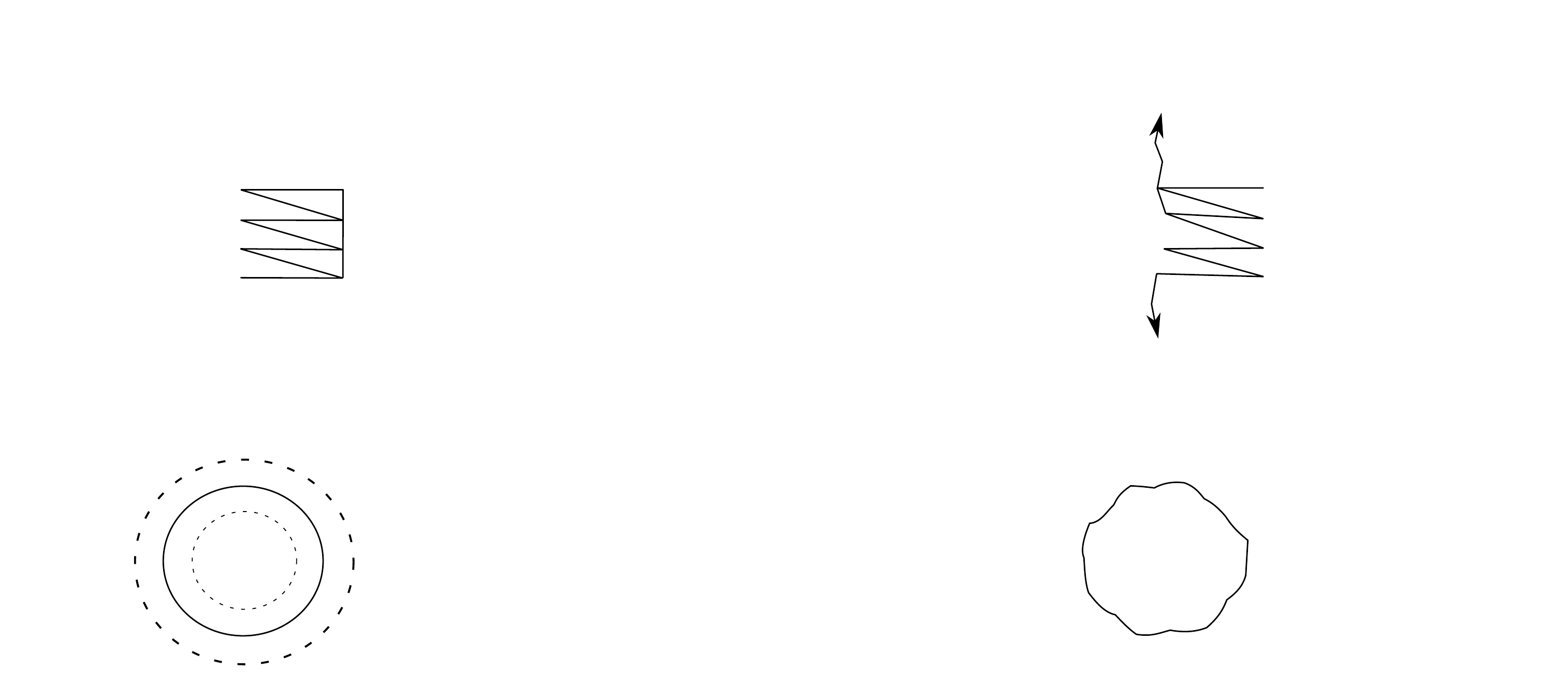}
% figure caption is below the figure
\caption{Illustrated is the strategy in the proof of Proposition \ref{beta_extension}. The definition (\ref{definition_of_beta}) of $\beta$ is extended by defining a piecewise-linear map $\hat{\beta}$ in the covering space of $R^{-1}<|z|<R$.}
\label{fig:beta_extension}       % Give a unique label
\end{figure}

It follows from the definition that $\hat\beta$ is $2\pi i$-periodic, $\hat\beta(\log\xi_j)=\hat\beta(\log(r_j\xi_j))$ for $1\leq j\leq m$, and $\hat\beta(z)\equiv z$ for $|\textrm{Re}(z)|=\log R$, so that $\hat\beta$ descends to a map which is an extension of $\beta$ in $R^{-1}<|z|<R$. It remains to verify the bound on the dilatation of this extension, for which it will suffice to bound the dilatation of the affine map between any two corresponding triangles pictured in Figure \ref{fig:beta_extension}. We will use Lemma \ref{dilatation_calculation} to perform the calculation for the triangles $T_1, T_2$ shaded in Figure \ref{fig:beta_extension} with vertices $z_1=\pi i/m$, $z_2=3\pi i/m$, $z_3=\log R + \pi i/m$ and $w_1=\pi i/m+\log r_1$, $w_2=3\pi i/m + \log r_2$, $w_3=z_3$. The calculation for the other triangles is similar. We have:

\begin{equation}\label{bound_1} \left| \frac{z_1-w_1}{z_3-z_1} \right| = \left| \frac{\log r_1}{\log R} \right| \leq \left| \frac{\log(\sqrt[\leftroot{-1}\uproot{2}n]{R}) }{\log R} \right| = \frac{1}{n}, \end{equation}

%Pursuant to Lemma \ref{dilatation_calculation}, we bound 

\noindent for $r_1\in\exp(\overline{D}(0,\log(\sqrt[\leftroot{-1}\uproot{2}n]{R}))$. Furthermore, 

\begin{eqnarray*}\label{bound_2}
\left| \frac{w_2-w_1}{z_2-z_1} \right|
& = & 
\left| \frac{\log(r_2\xi_2) - \log(r_1\xi_1)}{\log\xi_2-\log\xi_1} \right|
\\  & = &
\left| \frac{\log(r_2\xi_2) - \log(r_1\xi_1)}{\log'(r_1\xi_1)(r_2\xi_2-r_1\xi_1)}\right|\cdot\left|\frac{\log'(\xi_1)(\xi_2-\xi_1)}{\log(\xi_2)-\log(\xi_1)}\right|\cdot\left|\frac{r_2\xi_2-r_1\xi_1}{\xi_2-\xi_1}\right| \cdot \left|\frac{\log'(r_1\xi_1)}{\log'(\xi_1)}\right|
\\   & \leq & 
\frac{R^{-2/n}}{\left(R^{-1/n} - \left| \frac{r_2\xi_2-r_1\xi_1}{\xi_2-\xi_1} \right| \cdot \left|\xi_2-\xi_1\right|\right)^2}\cdot \left(1+\left|\xi_2-\xi_1\right|\right)^2\cdot R^{1/n} \cdot \frac{1+\left| r_1-1 \right|}{\left( 1- \left|r_1-1\right| \right)^3}
\\ & \leq & 
\frac{1}{\left(1 - R^{2/n} \cdot \left|\xi_2-\xi_1\right|\right)^2}\cdot \left(1+\left|\xi_2-\xi_1\right|\right)^2\cdot R^{1/n} \cdot \frac{R^{1/n}}{\left(2-R^{1/n} \right)^3},
\end{eqnarray*}

\noindent where the first inequality follows from Theorem \ref{thm:Koebe} and (\ref{convex_set}), and the second inequality follows from (\ref{convex_set}). Note that as $m\rightarrow\infty$, $\left|\xi_2-\xi_1\right|\rightarrow0$, and for $R<3/2$, $\sqrt[\leftroot{-1}\uproot{2}n]{R} < \sqrt[\leftroot{-1}\uproot{2}n]{3/2} \rightarrow 1$ as $n\rightarrow\infty$. Thus for any $s>1$, we have that $|(w_2-w_1)/(z_2-z_1)|<s$ for all sufficiently large $m$, $n$, and $\varepsilon:=\log(\sqrt[\leftroot{-1}\uproot{2}n]{R})$. Similarly, by using the left-hand sides of the inequalities in Theorem \ref{thm:Koebe}, we can show that $s^{-1}<|(w_2-w_1)/(z_2-z_1)|<s$ for all sufficiently large $m$, $n$, and $\varepsilon:=\log(\sqrt[\leftroot{-1}\uproot{2}n]{R})$. Lastly, by using the analogous estimates of Theorem \ref{thm:Koebe_argument} to estimate the argument of $(w_2-w_1)/(z_2-z_1)$, we can ensure that for any $\varepsilon'>0$, we have $\left|\arg((w_2-w_1)/(z_2-z_1))\right| < \varepsilon'$ for sufficiently large $m$, $n$, and $\varepsilon:=\log(\sqrt[\leftroot{-1}\uproot{2}n]{R})$. This means that we can fix $m_0'$ and $n_0$ so that for $m>m_0'$, $R<3/2$ and $\varepsilon:=\log(\hspace{-1mm}\sqrt[\leftroot{-1}\uproot{2}n_0]{R})$, we have that $|1-(w_2-w_1)/(z_2-z_1)|<1/10$ (the constant $1/10$ can be replaced here with any positive real number, perhaps by allowing for larger $m_0'$, $n_0$). Ensure furthermore that $n_0>10$ so that the right-hand side of (\ref{bound_1}) is less than $1/10$. Thus from (\ref{dilatation_calculation_formula}), we see that

% can fix $m_0$ and $n_0$ sufficiently large so that the following holds: for any $m>m_0$, $r<r_0$ and $(r_j)_{j=1}^m\in E_{\varepsilon}$ with $\varepsilon:=\log(\sqrt[\leftroot{-1}\uproot{2}n_0]{r})$, we have $|(w_2-w_1)/(z_2-z_1)|<s$. Similarly, by using the left-hand sides of the inequalities in Theorem \ref{thm:Koebe}, we can ensure that $m_0$, $n_0$ are such that $s^{-1}<|(w_2-w_1)/(z_2-z_1)|<s$ for any $m>m_0$ and for any $r<r_0$. Lastly, by using the analogous estimates of Theorem \ref{thm:Koebe_argument} to estimate the argument of $(w_2-w_1)/(z_2-z_1)$, we can ensure that for any $\varepsilon'>0$, we can choose $m_0$ and $n_0$ sufficiently large so that if $m>m_0$ and $r<r_0$, then $\left|\arg((w_2-w_1)/(z_2-z_1))\right| < \varepsilon'$. This means that we can fix $m_0$, $n_0$ and $r_0$ so that for $m>m_0$, $r<r_0$ and $\varepsilon:=\log(\sqrt[\leftroot{-1}\uproot{2}n_0]{r})$, we have that $|1-(w_2-w_1)/(z_2-z_1)|<1/10$ (the constant $1/10$ can be replaced here with any positive real number, perhaps by allowing for larger $m_0$, $n_0$). Ensure furthermore that $n_0>10$ so that the right-hand side of (\ref{bound_1}) is less than $1/10$. Thus from (\ref{dilatation_calculation_formula}), we see that 

\begin{equation}
\left\lVert \frac{\beta_{\overline{z}}}{\beta_z} \right\rVert_{L^\infty(\mathbb{C})} \leq \frac{1/10 + 1/10}{2-1/10-1/10} = \frac{1}{9} =: k_1. 
\end{equation}

\noindent The statement of continuity of the $L^\infty(\mathbb{C})$-valued map $(r_j)_{j=1}^m\mapsto\beta_{\overline{z}}/\beta_z$ follows from the expression (\ref{dilatation_expression_continuity}).

\end{proof}

It will be necessary to perturb the \emph{rescaled} $m^{\textrm{th}}$ roots of $-1$, so that for $\delta>0$ we make the definition $\beta^\delta(z):=\delta\beta(z/\delta)$. One has that $\left\lVert \beta_{\overline{z}}/\beta_z \right\rVert_{L^\infty(\mathbb{C})} = \left\lVert \beta^\delta_{\overline{z}}/\beta^\delta_z \right\rVert_{L^\infty(\mathbb{C})}$, and $\beta^\delta(\delta\xi_j)=r_j\delta\xi_j$ for $1\leq j\leq m$, as needed. In order to apply Theorem \ref{fixpoint}, we will need to establish the set $E_{\varepsilon}$ is convex:

\begin{lem} \label{convexity} For any $m\in\mathbb{N}$ and $0<\varepsilon<1$, $E_{\varepsilon}$ is a convex subset of $\mathbb{C}^m$.

\end{lem}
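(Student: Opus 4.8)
The plan is to reduce everything to one planar statement: \emph{$\exp(\overline{D}(0,\varepsilon))$ is a convex subset of $\mathbb{C}$ whenever $0<\varepsilon<1$}. Granting this, observe that, directly from the definition (\ref{convex_set}), $E_\varepsilon$ is the intersection over $1\leq j\leq m$ of the sets $\{(r_1,\dots,r_m)\in\mathbb{C}^m : r_j\in\exp(\overline{D}(0,\varepsilon))\}$ and $\{(r_1,\dots,r_m)\in\mathbb{C}^m : (\xi_{j+1}r_{j+1}-\xi_j r_j)/(\xi_{j+1}-\xi_j)\in\exp(\overline{D}(0,\varepsilon))\}$ (with the conventions $\xi_{m+1}=\xi_1$, $r_{m+1}=r_1$). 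Each of these is the preimage of $\exp(\overline{D}(0,\varepsilon))$ under a $\mathbb{C}$-linear — hence $\mathbb{R}$-linear, after identifying $\mathbb{C}^m\cong\mathbb{R}^{2m}$ and $\mathbb{C}\cong\mathbb{R}^2$ — map $\mathbb{C}^m\to\mathbb{C}$: the coordinate projection $r\mapsto r_j$ in the first case, and $r\mapsto(\xi_{j+1}r_{j+1}-\xi_j r_j)/(\xi_{j+1}-\xi_j)$ in the second, which is well defined because $\xi_{j+1}\neq\xi_j$. Since the preimage of a convex set under a linear map is convex and a finite intersection of convex sets is convex, this reduction suffices.

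To prove $\exp(\overline{D}(0,\varepsilon))$ is convex, first note that $\exp$ is injective on $\overline{D}(0,\varepsilon)$: this disc has diameter $2\varepsilon<2<2\pi$, so it contains no two points differing by a nonzero integer multiple of $2\pi i$. Being a continuous injection on a compact set, $\exp$ restricts to a homeomorphism of $\overline{D}(0,\varepsilon)$ onto its image; it carries the open disc to an open set $U$ and the bounding circle onto the Jordan curve $\gamma(\phi):=\exp(\varepsilon e^{i\phi})$, $\phi\in[0,2\pi]$. Since $\exp$ is an open map, $U$ is open with topological boundary $\gamma$, so by the Jordan curve theorem $U$ is the bounded complementary component of $\gamma$ and $\exp(\overline{D}(0,\varepsilon))=\overline{U}$ is the closed Jordan domain bounded by $\gamma$. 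Thus it remains only to check that $\gamma$ is a convex curve.

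For this I would compute the tangent direction. We have $\gamma'(\phi)=\varepsilon i e^{i\phi}\exp(\varepsilon e^{i\phi})$, which never vanishes, and a continuous branch of $\arg\gamma'$ is $\theta(\phi)=\tfrac{\pi}{2}+\phi+\varepsilon\sin\phi$; hence $\theta'(\phi)=1+\varepsilon\cos\phi>0$ for all $\phi$, using $\varepsilon<1$. Thus $\theta$ is strictly increasing with $\theta(2\pi)-\theta(0)=2\pi$, so the signed curvature $\theta'(\phi)/|\gamma'(\phi)|$ of $\gamma$ is everywhere positive. A regular, simple, closed $C^1$ plane curve whose tangent angle admits a monotone continuous determination bounds a convex region; hence $\overline{U}=\exp(\overline{D}(0,\varepsilon))$ is convex, and therefore so is $E_\varepsilon$. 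The only step here beyond routine bookkeeping is this curvature computation, and the role of the hypothesis $\varepsilon<1$ is precisely to force $1+\varepsilon\cos\phi$ to be strictly positive; for $\varepsilon<\pi$ one still obtains a Jordan curve from the injectivity argument, but not necessarily a convex one.
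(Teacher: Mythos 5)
Your proof is correct, and it splits the problem the same way the paper does — convexity of the planar set $\exp(\overline{D}(0,\varepsilon))$ plus the observation that the constraints defining $E_{\varepsilon}$ in (\ref{convex_set}) are linear in $(r_1,\dots,r_m)$ — but the two halves are handled differently. For the linear part, the paper checks directly that a convex combination of two points of $E_{\varepsilon}$ again satisfies the defining condition, via the identity expressing $\frac{[tr_{j+1}+(1-t)r_{j+1}']\xi_{j+1}-[tr_j+(1-t)r_j']\xi_j}{\xi_{j+1}-\xi_j}$ as the corresponding convex combination; your packaging of $E_\varepsilon$ as an intersection of preimages of $\exp(\overline{D}(0,\varepsilon))$ under $\mathbb{C}$-linear maps is the same content in more abstract form. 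The genuine divergence is the planar step: the paper verifies $\mathrm{Re}\left(1+zh''(z)/h'(z)\right)=\mathrm{Re}(1+z)>0$ on $\mathbb{D}$ for $h=\exp$ and cites Theorem 2.11 of Duren (the analytic criterion for a convex univalent image, applied to a rescaling), whereas you prove convexity of $\exp(\overline{D}(0,\varepsilon))$ from scratch: injectivity of $\exp$ on the small disc since $2\varepsilon<2\pi$, identification of the image as the closed Jordan domain bounded by $\gamma(\phi)=\exp(\varepsilon e^{i\phi})$, and monotonicity of the tangent angle $\theta(\phi)=\pi/2+\phi+\varepsilon\sin\phi$, i.e. $\theta'(\phi)=1+\varepsilon\cos\phi>0$ because $\varepsilon<1$. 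Note that your computation is exactly the boundary form of the criterion the paper cites (since $\mathrm{Re}(1+\varepsilon e^{i\phi})=1+\varepsilon\cos\phi$), so in effect you re-prove the relevant special case of Duren's theorem; what your route buys is self-containedness, at the cost of invoking the Jordan curve theorem and the standard fact that a regular simple closed curve with monotone tangent angle bounds a convex region, while the paper's citation reduces the planar step to a one-line verification. Both arguments use $\varepsilon<1$ at the same point, and both (yours and the paper's) tacitly treat the nondegenerate case $m\geq2$ so that $\xi_{j+1}\neq\xi_j$.
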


\begin{proof} Denote $h(z):=\exp(z)$. It is readily verified that  \[ \textrm{Re}\left( 1+ zh''(z)/h'(z) \right) > 0 \textrm{ for } z\in\mathbb{D},\] so that by Theorem 2.11 of \cite{Dur83},  $\exp(\overline{D}(0,1))$ is convex. The same Theorem 2.11 of \cite{Dur83} applied to a rescaled version of $h(z)=\exp(z)$ similarly shows that $\exp(\overline{D}(0,\varepsilon))$ is convex for any $0<\varepsilon<1$. We fix $m\in\mathbb{N}$ and $0<\varepsilon<1$ for the remainder of the proof.

%First we show that $\exp(\overline{D}(0,1))$ is convex. By Theorem 2.11 of \cite{Dur83}, it suffices to show \[ \textrm{Re}\left( 1+ zf''(z)/f'(z) \right) > 0 \] for $f(z):=\exp(z)$ and $z\in\mathbb{D}$, which is readily verified. The same Theorem 2.11 of \cite{Dur83} applied to a rescaled version of $f(z):=\exp(z)$ similarly shows that $\exp(\overline{D}(0,\varepsilon))$ is convex for any $0<\varepsilon<1$. We fix $m\in\mathbb{N}$ and $0<\varepsilon<1$ for the remainder of the proof.

Next we show that $t(r_1, ..., r_m)+(1-t)(r_1', ..., r_m') \in E_\varepsilon$ for any choice of $0\leq t\leq1$ and $(r_1, ..., r_m)$, $(r_1', ..., r_m') \in E_\varepsilon$. Note that $t(r_1, ..., r_m)+(1-t)(r_1', ..., r_m') \in \prod_{j=1}^m \exp(\overline{D}(0,\varepsilon))$ since $\prod_{j=1}^m \exp(\overline{D}(0,\varepsilon))$ is convex (it is a product of convex sets). That the other condition in (\ref{convex_set}) is satisfied follows from the calculation:

%The set $\prod_{j=1}^m \exp(\overline{D}(0,\varepsilon))$ is convex as it is a product of convex sets, so it remains to show that $t(r_1, ..., r_m)+(1-t)(r_1', ..., r_m') \in E_\varepsilon$ for any choice of $0\leq t\leq1$ and $(r_1, ..., r_m)$, $(r_1', ..., r_m') \in E_\varepsilon$. This follows from the calculation

\[ \frac{\left[tr_{j+1}+(1-t)r_{j+1}'\right]\xi_{j+1}-\left[ tr_j+(1-t)r_j' \right]\xi_j}{\xi_{j+1}-\xi_j} = t\frac{r_{j+1}\xi_{j+1}-r_j\xi_j}{\xi_{j+1}-\xi_j} + (1-t)\frac{r_{j+1}'\xi_{j+1}-r_j'\xi_j}{\xi_{j+1}-\xi_j}. \]

\end{proof}

We will henceforth suppress the parameter $\varepsilon$ in the definition of $\beta=\beta^\delta_{R, m, \varepsilon, (r_j)}$, as we will always choose $\varepsilon:=\log( \hspace{-1mm}\sqrt[\leftroot{-1}\uproot{2}n_0]{R})<1$ as in Proposition \ref{beta_extension}.  Lastly, we recall, from \cite{Bis15}, the definition of a quasiconformal map $\rho_w:\mathbb{D}\rightarrow\mathbb{D}$ which is the identity on $|z|=1$, conformal on a region containing $0$, and perturbs the origin to $w$:

 \[ \rho_w(z)=\begin{cases} 
      z+w & \textrm{ if } 0\leq|z|\leq1/8 \\
      z\frac{(8|z|-1)}{7}+(z +w)\frac{8-8|z|}{7} & \textrm{ if } 1/8\leq|z|\leq 1.
   \end{cases}
\]

\begin{lem} \label{rho} There exists a constant $k_2<1$ independent of $w\in \overline{D}(0,3/4)$ such that $||\frac{(\rho_w)_{\overline{z}}}{(\rho_w)_z}||_{L^\infty(\mathbb{D})}<k_2$. The $L^\infty(\mathbb{D})$-valued map $w\mapsto (\rho_w)_{\overline{z}}/(\rho_w)_{z}$ is continuous as a function of $w\in \overline{D}(0,3/4)$.

\end{lem}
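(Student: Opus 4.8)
The plan is to treat the two assertions separately but with the same underlying observation: the map $\rho_w$ is piecewise-smooth, with one piece ($|z|\le 1/8$) on which it is a pure translation (hence conformal), and one interpolating annulus $1/8\le |z|\le 1$ on which $\rho_w$ is an explicit formula affine in the two parameters $z$ and $w$ (after writing $|z|$, $z$, $\bar z$ as independent symbols). So the whole matter reduces to controlling the Beltrami coefficient of the map on the annulus $A := \{1/8\le |z|\le 1\}$, uniformly in $w\in\overline D(0,3/4)$. First I would compute $\partial_z\rho_w$ and $\partial_{\bar z}\rho_w$ on $A$ directly from the formula $\rho_w(z) = z\frac{8|z|-1}{7} + (z+w)\frac{8-8|z|}{7}$, using $\partial_z |z| = \frac{\bar z}{2|z|}$ and $\partial_{\bar z}|z| = \frac{z}{2|z|}$. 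This gives
\[
(\rho_w)_z(z) = 1 - \frac{8w}{7}\cdot\frac{\bar z}{2|z|}, \qquad (\rho_w)_{\bar z}(z) = -\frac{8w}{7}\cdot\frac{z}{2|z|},
\]
so that $\bigl|(\rho_w)_{\bar z}\bigr| = \frac{4|w|}{7}$ and $\bigl|(\rho_w)_z\bigr| \ge 1 - \frac{4|w|}{7}$ on $A$ by the triangle inequality.

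For the first assertion, feed the bound $|w|\le 3/4$ into the two displayed quantities: then $\bigl|(\rho_w)_{\bar z}/(\rho_w)_z\bigr| \le \frac{4|w|/7}{1-4|w|/7} \le \frac{3/7}{1-3/7} = \frac{3}{4}$ on $A$, while on $|z|\le 1/8$ the dilatation is $0$. Hence $k_2 := 3/4 < 1$ works and is manifestly independent of $w\in\overline D(0,3/4)$; if one wants a strict inequality $<k_2$ one simply enlarges the constant slightly (take $k_2 = 7/8$, say). For the second assertion, note that on the disc $|z|\le 1/8$ the dilatation is identically $0$ regardless of $w$, and on $A$ the formulas above exhibit $(\rho_w)_{\bar z}/(\rho_w)_z$ as, for each fixed $z$, a continuous (indeed rational, with non-vanishing denominator bounded below by $1-3/7>0$) function of $w$; moreover the difference $\bigl\| (\rho_{w})_{\bar z}/(\rho_{w})_z - (\rho_{w'})_{\bar z}/(\rho_{w'})_z \bigr\|_{L^\infty(\mathbb D)}$ is controlled by $|w-w'|$ via these explicit formulas together with the uniform lower bound on the denominators, so $w\mapsto (\rho_w)_{\bar z}/(\rho_w)_z$ is in fact Lipschitz, hence continuous, as an $L^\infty(\mathbb D)$-valued map on $\overline D(0,3/4)$.

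I do not expect a genuine obstacle here; the only mild subtlety is bookkeeping the non-smoothness of $\rho_w$ across $|z|=1/8$ and $|z|=1$. Since $\rho_w$ is Lipschitz and piecewise $C^1$, its distributional derivatives agree a.e. with the classical derivatives computed piecewise, and the measure-zero circles $|z|=1/8$, $|z|=1$ are irrelevant for the $L^\infty$ norm; this is exactly the same routine verification already used implicitly for $\psi$ and $\beta$ earlier in this section, so I would simply remark on it rather than belabor it. The one place to be slightly careful is that the constant $k_2$ must not depend on $w$, which is why I keep the bound in the form $\frac{4|w|/7}{1-4|w|/7}$ and only at the end substitute the worst case $|w|=3/4$.
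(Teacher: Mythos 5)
Your computation is correct: on the interpolation annulus $\rho_w(z)=z+\tfrac{8w}{7}(1-|z|)$, so $|(\rho_w)_{\bar z}|=\tfrac{4|w|}{7}$, $|(\rho_w)_z|\ge 1-\tfrac{4|w|}{7}$, giving the uniform bound $\le 3/4$ for $|w|\le 3/4$, and the uniform lower bound on the denominator yields Lipschitz (hence continuous) dependence of the Beltrami coefficient on $w$ in $L^\infty(\mathbb{D})$. This is essentially the same explicit-computation argument as in the sources the paper cites for this lemma, so nothing further is needed.
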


\noindent For the proof of Lemma \ref{rho}, see Lemma 3.4 of \cite{2018arXiv180711820M} or Section 3 of \cite{MR3339086}. Let \[ \Lambda(\delta, m):=\delta(\delta/m)^{1/(m-1)}(m-1)/m. \] We will consider, in ensuing sections, the following composition:

\begin{equation}\label{composition} \iota^{w, \delta, m, R, (r_j)} := \rho_w \circ \beta^{\Lambda(\delta, m)}_{R, m-1, (r_j)_{j=1}^{m-1}} \circ \psi_{\delta, m}: \mathbb{D} \rightarrow\mathbb{D}
\end{equation}

\noindent (see Figure \ref{fig:D-component_map}) where the terms $\Lambda(\delta, m)$ and $m-1$ in the second factor are chosen so that $\beta$ perturbs precisely the critical values of $\psi_{\delta, m}$. We will sometimes suppress the superscripts in (\ref{composition}) and simply write $\iota$. We have established in Lemma \ref{interpolation}, Proposition \ref{beta_extension}, and Lemma \ref{rho} a bound on the dilatation of (\ref{composition}) which is essentially independent of the parameters, and we will wish to vary those parameters so that the support of the dilatation of (\ref{composition}) is as small as desired:

\begin{prop}\label{small_support} Let $s<1$, $1/16>\tilde\delta>0$, and $1<\tilde{R}<1/s$. Then there exists $m_0''\in\mathbb{N}$ (depending on $s$, $\tilde\delta$, $\tilde{R}$) such that if $m>m_0''$, $1\leq R<\tilde{R}$, $(r_j)_{j=1}^{m-1} \in E_{\varepsilon}$, $w\in \overline{D}(0,3/4)$ and $1/16\geq\delta\geq\tilde\delta$, then $\emph{supp}\left( \iota^{w, \delta, m, R, (r_j)}_{\overline{z}}\right) \subset \{ z \in \mathbb{D} : |z|>s\}$. 

\end{prop}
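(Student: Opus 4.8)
The plan is to show that $\iota := \iota^{w,\delta,m,R,(r_j)}$ is holomorphic on the fixed disc $D(0,s')$, where $s' := (s + 1/\tilde{R})/2$, so that $s < s' < 1/\tilde{R}$; since $\iota_{\overline{z}} \equiv 0$ on any open set on which $\iota$ agrees with a holomorphic map, this gives that $D(0,s')$ is disjoint from $\textrm{supp}(\iota_{\overline{z}})$, which (as $s' > s$) is exactly the claim. The strategy is that near the origin each of the three factors $\psi := \psi_{\delta,m}$, $\beta := \beta^{\Lambda(\delta,m)}_{R,m-1,(r_j)}$, $\rho := \rho_w$ reduces to an explicit holomorphic formula, and one only needs to track that the image of $D(0,s')$ under the composition of the earlier factors stays inside the region on which the next factor is given by that formula.

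First I would handle $\psi$. Since $\delta \leq 1/16$, we have $r = 1 - 4\delta/m \geq 1 - 1/(4m) > s'$ once $m$ is large, so on $D(0,s')$ we have $\eta\equiv 1$ and $\psi(z) = z^m + \delta z$, which is holomorphic, with $|\psi(z)| < (s')^m + \delta s'$ there. Next I would handle $\beta$: by $(\ref{definition_of_beta})$ together with the rescaling $\beta^{\Lambda}(z) = \Lambda\beta(z/\Lambda)$, the map $\beta$ is the identity on $\overline{D}(0, \Lambda(\delta,m)/R)$, so it remains to verify $(s')^m + \delta s' \leq \Lambda(\delta,m)/R$; since $R < \tilde{R}$ it is enough to check $(s')^m + \delta s' < \Lambda(\delta,m)/\tilde{R}$, i.e. (after dividing by $\delta$) that $(s')^m/\delta + s' < \tilde{R}^{-1}(\delta/m)^{1/(m-1)}(m-1)/m$. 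Here $\Lambda(\delta,m)/\delta = (\delta/m)^{1/(m-1)}(m-1)/m \to 1$ uniformly for $\delta \in [\tilde{\delta}, 1/16]$ (since $\frac{1}{m-1}\left|\log(\delta/m)\right| \leq \frac{|\log\tilde{\delta}| + \log m}{m-1} \to 0$), while $(s')^m/\delta + s' \leq (s')^m/\tilde{\delta} + s' \to s'$, and $s' < 1/\tilde{R}$ by construction; bounding the left side above and the right side below by their values at $\delta = \tilde{\delta}$ then shows that for $m$ large the inequality holds for \emph{all} admissible $\delta$ and $R$, whence $\beta \circ \psi = \psi$ on $D(0,s')$. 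This is the step that uses the hypothesis $\tilde{R} < 1/s$.

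Finally I would handle $\rho$: by definition $\rho$ is the translation $z \mapsto z + w$ on $\overline{D}(0,1/8)$, and since $\delta s' < 1/16$ (as $\delta \leq 1/16$, $s' < 1$) and $(s')^m < 1/16$ for $m$ large, we get $\psi(D(0,s')) \subset D(0,1/8)$; combined with $\beta\circ\psi = \psi$ from the previous step, this shows $\iota = \rho\circ\beta\circ\psi$ coincides with $z\mapsto z^m + \delta z + w$ on $D(0,s')$, hence is holomorphic there. Taking $m_0''$ to be any integer exceeding all the finitely many thresholds appearing above then completes the argument.

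I expect the only real obstacle to be bookkeeping: verifying that the several smallness requirements on $m$ can be met simultaneously and, crucially, \emph{uniformly} over $\delta\in[\tilde{\delta},1/16]$ and $R\in[1,\tilde{R})$ — the parameters $w$ and $(r_j)$ cause no difficulty, since the origin-neighbourhoods on which $\rho$ and $\beta$ are holomorphic do not depend on them. The one genuinely substantive observation is that the hypothesis $s < 1/\tilde{R}$ is precisely what forces $\psi(D(0,s'))$ into the identity-region $\overline{D}(0,\Lambda(\delta,m)/R)$ of $\beta$.
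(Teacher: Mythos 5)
Your proposal is correct and follows essentially the same route as the paper's proof: you track the image of a disc near the origin through $\psi_{\delta,m}$, $\beta^{\Lambda(\delta,m)}_{R,m-1,(r_j)}$, and $\rho_w$, and the decisive estimate $(s')^m+\delta s' < R^{-1}\Lambda(\delta,m)$ (uniform over $\delta\in[\tilde\delta,1/16]$ and $R<\tilde R$, using $s<1/\tilde R$) is exactly the paper's key inequality. The only cosmetic difference is that you work on the slightly larger disc $D(0,s')$ and phrase the conclusion as $\iota$ coinciding with the holomorphic map $z\mapsto z^m+\delta z+w$ there, rather than bounding the supports of the three dilatation contributions separately.
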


\begin{proof} The map $\psi_{\delta, m}(z)$ is holomorphic, by definition, for $z\in\{z\in\mathbb{D}: |z|<1-(4\delta)/m \} \subset \{ z \in \mathbb{D} : |z| < 1-1/(4m) \}$, and $1-4/m > s$ for sufficiently large $m$. Next we consider the map $\rho_w(z)$, which is holomorphic for $|z| < 1/8$. Note that for sufficiently large $m$, $s^m<1/16$, whence $|z^m+\delta z| < s^m + \delta < 1/8$ for $|z|<s$ and hence the pullback of the dilatation of $\rho_w$ under $\beta \circ \psi_{\delta, m}$ is supported in $|z|>s$ (note that $\beta(z) \equiv z$ for $|z|>1/8$ and small $R$). 

Lastly we consider the pullback of $\textrm{supp}(\beta_{\overline{z}})=\{z\in\mathbb{D}: R^{-1}\Lambda(\delta, m) < |z| < R\Lambda(\delta, m)\}$ under $\psi_{\delta, m}$. We want to show that for large $m$, $\tilde{R}>R>1$ and $|z|<s$, we have $|z^m+\delta z| < R^{-1}\Lambda(\delta, m)$. Well since $|z^m+\delta z| < s^m+\delta s $, it suffices to show $s^m+\delta s < R^{-1}\Lambda(\delta, m)$, which can be rearranged to $s^m < \delta\left[ R^{-1}(\delta/m)^{1/(m-1)}(m-1)/m -s \right]$. We have

\begin{equation}\hspace{-5mm} \delta\left(R^{-1}(\delta/m)^{1/(m-1)}(m-1)/m - s\right)> \tilde\delta\left(\tilde{R}^{-1}(\tilde\delta/m)^{1/(m-1)}(m-1)/m - s\right) \xrightarrow{m\rightarrow\infty} \tilde\delta\left(\tilde{R}^{-1} -s\right)>0, \nonumber
\end{equation}

\noindent whereas $s^m\rightarrow0$ as $m\rightarrow\infty$. It follows that for sufficiently large $m$ and $1<R<\tilde{R}$, we have that the pullback of $\textrm{supp}(\beta_{\overline{z}})$ under $\psi_{\delta, m}$ is contained in $\{ z \in \mathbb{D} : |z|>s\}$.

\end{proof}

\begin{figure}
\centering
\hspace*{-0mm}\scalebox{.68}{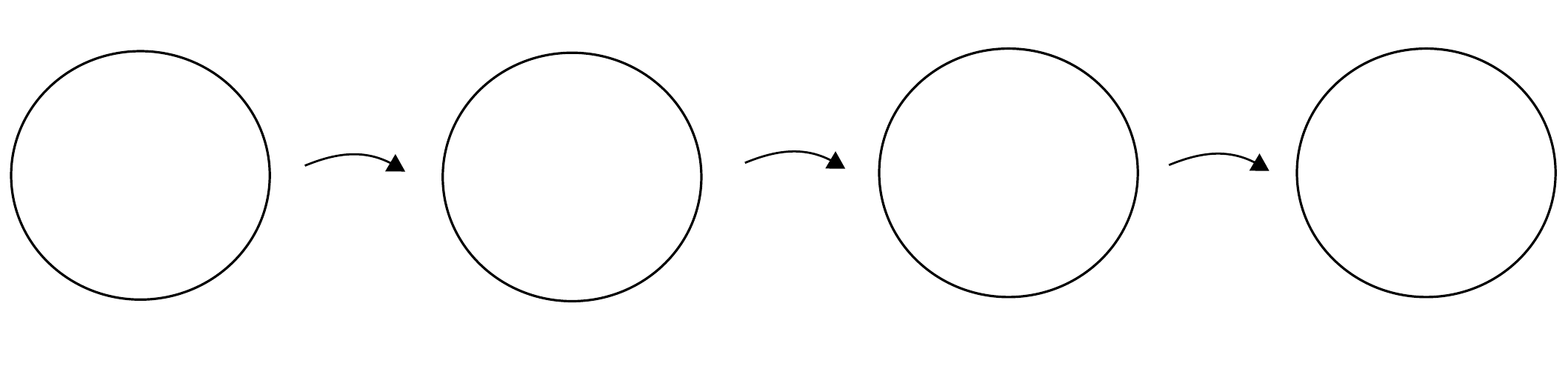}
\caption{ This figure illustrates the quasiregular function $\iota: \mathbb{D}\rightarrow\mathbb{D}$ as in (\ref{composition}). The $m-1$ critical points and their images are marked. Note that $\iota|_{\partial\mathbb{D}}(z)=z^m$. For the sake of clarity, the critical values are pictured at a larger scale than as considered in Proposition \ref{small_support}. }
\label{fig:D-component_map}       % Give a unique label
\end{figure}

\begin{prop}\label{continuity_into_L^infty} For fixed $m\in \mathbb{N}$ and  $R>1$, the $L^{\infty}(\mathbb{D})$-valued map 
\begin{equation} (w,\delta,(r_j)) \mapsto  \left(\iota^{w, \delta, m, R, (r_j)}_{\overline{z}}\right) \big/ \left(\iota^{w, \delta, m, R, (r_j)}_z\right) \nonumber\end{equation} 

\noindent is continuous as a function of $(w,\delta,(r_j)_{j=1}^{m-1}) \in \overline{D}(0,3/4) \times (0,\delta_0] \times E_{\varepsilon}$ for $\delta_0$ as in Lemma \ref{interpolation}.
\end{prop}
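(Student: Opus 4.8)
The plan is to express the complex dilatation $\mu_\iota:=\iota_{\overline z}/\iota_z$ of $\iota=\rho_w\circ\beta^{\Lambda(\delta,m)}_{R,m-1,(r_j)}\circ\psi_{\delta,m}$ via the chain rule for Beltrami coefficients and then deduce its continuity from that of the dilatations of the three factors, supplied by Lemma \ref{first_dependence_on_parameters}, Proposition \ref{beta_extension}, and Lemma \ref{rho}. Write $\tau_h:=\overline{h_z}/h_z$ for quasiregular $h$ (a unimodular function defined a.e.\ where $h_z\neq0$); recall that if $\lVert\mu_g\rVert_\infty,\lVert\mu_h\rVert_\infty\le k<1$ then $\mu_{g\circ h}=(\mu_h+\tau_h(\mu_g\circ h))/(1+\overline{\mu_h}\tau_h(\mu_g\circ h))$. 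I would apply this twice, first to $(\rho_w\circ\beta^{\Lambda})\circ\psi_{\delta,m}$ and then inside the first factor. Since every dilatation in sight is bounded away from $1$ uniformly in the parameters (Lemma \ref{interpolation}, Proposition \ref{beta_extension}, Lemma \ref{rho}), the algebraic operations occurring in this formula are Lipschitz on the relevant region, so it suffices to prove $L^\infty(\mathbb{D})$-continuity in $(w,\delta,(r_j))$ of the ingredients $\mu_{\psi_{\delta,m}}$, $\mu_{\beta^{\Lambda(\delta,m)}}\circ\psi_{\delta,m}$, $\mu_{\rho_w}\circ\beta^{\Lambda(\delta,m)}\circ\psi_{\delta,m}$, and the phase factors $\tau_{\psi_{\delta,m}}$, $\tau_{\beta^{\Lambda(\delta,m)}\circ\psi_{\delta,m}}$. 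Continuity in $(w,(r_j))$ for fixed $\delta$ is the easy case — the inner maps are then frozen and one only composes the $L^\infty$-continuous families $\mu_{\rho_w}$ and $\mu_\beta$ with fixed measurable maps, which cannot increase the $L^\infty$ norm — so the substance is the $\delta$-dependence.

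For that I would decompose $\mathbb{D}$ according to the supports of the dilatations of the three factors, as in the proof of Proposition \ref{small_support}, after shrinking $\delta_0$ so that $R\,\Lambda(\delta_0,m)<1/8$. With $r(\delta):=1-(4\delta)/m$: on $\{|z|\ge r(\delta)\}$ one has $|\psi_{\delta,m}(z)|\ge r(\delta)^m-\delta>R\,\Lambda(\delta_0,m)$, so $\beta^{\Lambda}\circ\psi_{\delta,m}=\psi_{\delta,m}$ there and $\mu_\iota=\mu_{\rho_w\circ\psi_{\delta,m}}$; this is continuous because $\mu_{\psi_{\delta,m}}$ is (Lemma \ref{first_dependence_on_parameters}), $(\psi_{\delta,m})_z=mz^{m-1}+\delta\eta+\delta z\eta_z$ is nonvanishing on $\{|z|\ge r(\delta)\}$ and continuous in $\delta$ (from the proofs of Lemma \ref{first_dependence_on_parameters} and Lemma \ref{interpolation}), and $\mu_{\rho_w}$ is a genuinely continuous function on $\mathbb{D}$, continuous in $w$ (Lemma \ref{rho}), composed with $\psi_{\delta,m}$ — which converges uniformly on $\mathbb{D}$ by the equicontinuity of the $\eta_\delta$ exploited in the proof of Lemma \ref{first_dependence_on_parameters}, composition of a continuous map with a uniformly convergent sequence preserving uniform convergence; the motion of $\{|z|\ge r(\delta)\}$ with $\delta$ produces no jump because $\psi_{\delta,m}$ and its first derivatives are continuous across $\{|z|=r(\delta)\}$ by smoothness of the bump function. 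On the subset of $\{|z|<r(\delta)\}$ on which $\psi_{\delta,m}(z)\in\textrm{supp}\,\mu_{\rho_w}$ the same reasoning applies (there $\mu_{\psi_{\delta,m}}\equiv0$, $\beta^{\Lambda}\circ\psi_{\delta,m}=\psi_{\delta,m}$, $\mu_\iota=\tau_{\psi_{\delta,m}}(\mu_{\rho_w}\circ\psi_{\delta,m})$, and $(\psi_{\delta,m})_z$ is nonvanishing because the critical points $c_j(\delta)=(-\delta/m)^{1/(m-1)}$ have small modulus), while $\mu_\iota\equiv0$ on the remainder of $\{|z|<r(\delta)\}$; on all these pieces the outer phase factor $\tau_{\beta^{\Lambda}\circ\psi_{\delta,m}}$ reduces to $\tau_{\psi_{\delta,m}}$.

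The genuinely delicate piece is $A_\beta(\delta):=\{z:|z|<r(\delta),\ \psi_{\delta,m}(z)\in\textrm{supp}\,\mu_{\beta^{\Lambda(\delta,m)}}\}$, an annular neighborhood of the critical points $c_j(\delta)$ — which \emph{move} with $\delta$ — on which $\mu_{\beta^{\Lambda}}$, supported in $\{R^{-1}\Lambda(\delta,m)<|\zeta|<R\Lambda(\delta,m)\}$ around the critical values, is genuinely nonzero, and on which $\mu_\iota=\tau_{\psi_{\delta,m}}\cdot(\mu_{\beta^{\Lambda(\delta,m)}}\circ\psi_{\delta,m})$ (here $\mu_{\psi_{\delta,m}}\equiv0$ and $\rho_w$ is an honest translation on the image of $A_\beta(\delta)$, so $\mu_{\rho_w\circ\beta^{\Lambda}}=\mu_{\beta^{\Lambda}}$ there). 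I would exploit the rescaling $u:=z/\lambda(\delta)$, $\lambda(\delta):=(\delta/m)^{1/(m-1)}$: for $|z|<r(\delta)$, a short computation gives $(\psi_{\delta,m})_z(z)=\delta(u^{m-1}+1)$ and $\psi_{\delta,m}(z)/\Lambda(\delta,m)=(u^{m}+mu)/(m-1)=:\Psi(u)$, so, using $\mu_{\beta^{\Lambda}}(\zeta)=\mu_\beta(\zeta/\Lambda)$, the restriction of $\mu_\iota$ to $A_\beta(\delta)=\lambda(\delta)\cdot\{u:R^{-1}<|\Psi(u)|<R\}$ equals $z\mapsto\widetilde\mu(z/\lambda(\delta))$ with $\widetilde\mu:=\tau\cdot(\mu_\beta\circ\Psi)$, $\tau(u):=\overline{u^{m-1}+1}/(u^{m-1}+1)$. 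Crucially $\widetilde\mu$ depends on the parameters only through $\mu_\beta$ — hence continuously, uniformly in $(r_j)$, by Proposition \ref{beta_extension} — and not at all on $\delta$ or $w$. This reduces the $\delta$-dependence of the last piece to precomposition of the fixed function $\widetilde\mu$ with the dilation $z\mapsto z/\lambda(\delta)$, where $\lambda$ is a fixed continuous positive function of $\delta$ which stays bounded away from $0$ on any compact subinterval of $(0,\delta_0]$. The heart of the argument — and the step I expect to be the main obstacle — is precisely this last reduction: one must control the effect on the (curve-)discontinuity set of $\widetilde\mu$ of dilating by the ratio $\lambda(\delta)/\lambda(\delta')$, using that $A_\beta(\delta)$ has uniformly bounded modulus. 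Once that is carried out, gluing the contributions from the three pieces completes the proof.
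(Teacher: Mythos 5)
Your plan stalls exactly at the step you yourself flag as the main obstacle, and that step is not merely technical: it cannot be carried out in the sup norm. On the piece $A_\beta(\delta)$ you have correctly reduced the $\delta$-dependence to precomposition of the fixed function $\widetilde\mu=\tau\cdot(\mu_\beta\circ\Psi)$ with the dilation $z\mapsto z/\lambda(\delta)$; but $\mu_\beta$ is the Beltrami coefficient of a piecewise-affine map, hence piecewise constant with genuine jumps across the (fixed) triangulation edges for generic $(r_j)\in E_\varepsilon$, so $\widetilde\mu$ has jump discontinuities along curves. Dilating by a ratio $\lambda(\delta)/\lambda(\delta')\neq 1$, however close to $1$, shifts these curves and produces a set of positive measure on which $\widetilde\mu(\cdot/\lambda(\delta))$ and $\widetilde\mu(\cdot/\lambda(\delta'))$ differ by the fixed jump height, so $\lVert\widetilde\mu(\cdot/\lambda(\delta))-\widetilde\mu(\cdot/\lambda(\delta'))\rVert_{L^\infty}$ does not tend to $0$. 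The same problem already infects the piece you call easy: $\mu_{\rho_w}$ is \emph{not} continuous on $\mathbb{D}$ (it jumps across $|\zeta|=1/8$, since $\rho_w(z)=z+w\,h(|z|)$ with $h$ only piecewise linear), and in the term $\mu_{\rho_w}\circ\psi_{\delta,m}$ the preimage $\psi_{\delta,m}^{-1}(\{|\zeta|=1/8\})$ moves with $\delta$. So "controlling the effect on the discontinuity set" cannot rescue sup-norm continuity in $\delta$; no further estimate of the kind you propose will close this.

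What your reduction does deliver, essentially for free, is pointwise a.e.\ convergence of the dilatations as the parameters converge (the discontinuity sets are null, and off them your formulas converge pointwise), together with the uniform bound $k<1$ coming from Lemma \ref{interpolation}, Proposition \ref{beta_extension} and Lemma \ref{rho}. That is exactly the hypothesis of Theorem \ref{parameters} and is all that is used downstream in Proposition \ref{fixpoint_application}, so the serviceable version of the statement (and the one your argument proves) is continuity in the sense of a.e.\ convergence of $\mu_\iota$ along parameter sequences, not continuity into $L^\infty(\mathbb{D})$. For comparison, the paper's own proof is essentially your first paragraph: it invokes the composition formula for Beltrami coefficients together with Lemma \ref{first_dependence_on_parameters}, Proposition \ref{beta_extension} and Lemma \ref{rho}, without performing your decomposition and without addressing the precomposition-with-a-moving-inner-map issue; your analysis correctly isolates that subtlety, but as written your proposal neither completes the $L^\infty$ claim nor reformulates it in the weaker form that your computation actually supports. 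Minor additional points: shrinking $\delta_0$ is not available to you (it is fixed by Lemma \ref{interpolation}; instead bound $R\Lambda(\delta,m)$ directly on the permissible range), and the claim $|\psi_{\delta,m}(z)|\geq r(\delta)^m-\delta$ should be checked uniformly, but these are cosmetic next to the main gap.
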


\begin{proof} This follows from the continuity of the $L^{\infty}(\mathbb{D})$-valued maps of Lemma \ref{first_dependence_on_parameters}, Proposition \ref{beta_extension}, Lemma \ref{rho}, and the transformation formula: (see, for instance, Section IV.5.2 of \cite{MR0344463})

\[ \mu_{\phi\circ\chi}(z) = \frac{\mu_{\chi}(z) + \mu_{\phi}(\chi(z))e^{-2i\arg \chi_z(z)} }{1+\mu_{\chi}(z)\mu_{\phi}(\chi(z))e^{-2i\arg \chi_{\overline{z}}(z)}} \] 

\noindent for the dilatation $\mu_{\phi\circ\chi}$ of the composition of two quasiconformal maps $\phi, \chi: \mathbb{D}\rightarrow\mathbb{D}$. 
\end{proof}

\begin{rem} Recall that for $\delta>0$, we defined $\beta^{\delta}(z):=\delta\beta(z/\delta)$. We will have occasion to consider the degenerate case $\delta=0$, where we define $\beta^{0}(z):=z$. Note that $\psi_{0, m}(z)=z^m$ either by convention or suitable interpretation of the definition in Lemma \ref{interpolation}, so that for $\delta=0$ the mapping (\ref{composition}) becomes $z\mapsto \rho_w(z^m)$. 
\end{rem}

\section{A Base Family of Quasiregular Maps }
\label{model_map}

In this Section, we construct a family of quasiregular maps $g$ depending on several sets of parameters and provide relevant estimates. In the next Section, we prove that for some particular choice of these parameters, $g \circ \phi^{-1}$ is the desired function $f$ in the statement of Theorem \ref{main_theorem}, where $\phi^{-1}$ is an appropriately normalized straightening map of Theorem \ref{MRT}. This Section largely follows Section 4 of \cite{2017arXiv171110629F}, and we will omit those proofs which can be found there.

We define the horizontal half-strip $S^+ := \{ z\in\mathbb{C} : \textrm{Re}(z)>0, |\textrm{Im}(z)|<\pi/2 \}$, points $z_n :=  a_n +i\pi \sim n\pi +i\pi$ (see Section 4.1 of \cite{2017arXiv171110629F} for a precise definition of the points $z_n$), and discs $D_n := \overline{D}(z_n, 1)$. One defines the quasiregular map 

\begin{equation} \label{quasiregular_definition} g(z)=\begin{cases} 
      \sigma(\lambda\sinh(z)) & \textrm{ if } z\in S^+ \\
       \iota^{w_n, \delta_n, m_n, R_n, (r_j^n)}(z-z_n) & \textrm{ if } z \in D_n, \\
   \end{cases}
\end{equation}

\noindent where $\sigma(z)\equiv\exp(z)$ for $\textrm{Re}(z)>2\pi$ (see Section 4.1 of \cite{2017arXiv171110629F} for further discussion of the map $\sigma$). We have emphasized the dependence in the definition of $g$ on several sets of parameters: $\lambda, w, R, (r_j)_{j=1}^{m-1}, \delta, m$ (see Section \ref{disc_component_maps} for a discussion of the parameters $w, R, (r_j), \delta, m$). We have noted in (\ref{quasiregular_definition}), furthermore, that $w, R, (r_j), \delta, m$ are allowed to depend on $n$. We will use the notation $\boldsymbol{w}$ to denote the vector $(w_1, w_2, ...)$, and similarly for $\boldsymbol{\delta}$, $\boldsymbol{R}$, $\boldsymbol{m}$. We will use either of the notations $w_n$ or $\boldsymbol{w}(n)$ to denote the $n^{\textrm{th}}$ element of $\boldsymbol{w}$, and similarly for $\boldsymbol{\delta}$, $\boldsymbol{R}$, $\boldsymbol{m}$. We will use the notation $(\boldsymbol{r}_j)$ to denote the sequence $(( \boldsymbol{r}_j(k) )_{j=1}^{m_k-1})_{k=1}^{\infty}$ of vectors $(r_j(1))_{j=1}^{m_1-1} \in \mathbb{C}^{m_1-1}, (r_j(2))_{j=1}^{m_2-1} \in \mathbb{C}^{m_2-1}$, .... The following is an application of the Folding Theorem of \cite{Bis15}:

%\begin{thm}\label{g_extension} There exist $m_0'''\in\mathbb{N}$, $\delta_0>0$, $n_0\in\mathbb{N}$ and  $k_3<1$ such that if $\boldsymbol{m}(k)>m_0'''$, $0\leq\boldsymbol{\delta}(k)<\delta_0$, $1\leq\boldsymbol{R}(k)<3/2$, $(\boldsymbol{r}_j(k))_{j=1}^{m_k-1} \in E_{\hspace{-1mm}\sqrt[\leftroot{-5}\uproot{2}n_0]{\boldsymbol{R}(k)}}$ and $\boldsymbol{w}(k)\in \overline{D}(0,3/4)$ for all $k\in\mathbb{N}$, then, for any $\lambda>1$, $g$ as in (\ref{quasiregular_definition}) may be extended to a quasiregular map $g:\mathbb{C}\rightarrow\mathbb{C}$ such that $||g_{\overline{z}}/g_z||_{L^\infty(\mathbb{C})}<k_3$. The function $g:\mathbb{C}\rightarrow\mathbb{C}$ satisfies $g(-z)=g(z)$, $g(\overline{z})=\overline{g(z)}$ for all $z\in\mathbb{C}$. The singular set of $g$ consists only of the critical values 

\begin{thm}\label{g_extension} There exist $m_0'''\in\mathbb{N}$, $\delta_0>0$, $n_0\in\mathbb{N}$ and  $k_3<1$ such that if \[ \boldsymbol{m}(k)>m_0'''\textrm{, } 0\leq\boldsymbol{\delta}(k)<\delta_0\textrm{, } 1\leq\boldsymbol{R}(k)<3/2\textrm{, } (\boldsymbol{r}_j(k))_{j=1}^{m_k-1} \in E_{\hspace{-1mm}\sqrt[\leftroot{-5}\uproot{2}n_0]{\boldsymbol{R}(k)}}\textrm{ and } \boldsymbol{w}(k)\in \overline{D}(0,3/4)\] for all $k\in\mathbb{N}$, then, for any $\lambda>1$, $g$ as in (\ref{quasiregular_definition}) may be extended to a quasiregular map $g:\mathbb{C}\rightarrow\mathbb{C}$ such that $||g_{\overline{z}}/g_z||_{L^\infty(\mathbb{C})}<k_3$. The function $g:\mathbb{C}\rightarrow\mathbb{C}$ satisfies $g(-z)=g(z)$, $g(\overline{z})=\overline{g(z)}$ for all $z\in\mathbb{C}$. The singular set of $g$ consists only of the critical values 

\[ \pm1 \emph{  and  } \left( \left(w_k+\delta_k\left(\frac{\delta_k}{m_k}\right)^{\left(\frac{1}{m_k-1}\right)}\left(\frac{m_k-1}{m_k}\right)\xi_j \right)_{j=1}^{m_k-1} \right)_{k=1}^{\infty} \]

\noindent and their copies under the symmetries $z\mapsto -z$, $z\mapsto\bar{z}$, where $(\xi_j)_{j=1}^{m_k-1}$ are the $(m_k-1)^\textrm{th}$ roots of $-1$.

\end{thm}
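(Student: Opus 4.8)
The plan is to invoke Bishop's Folding Theorem to extend the quasiregular germ $g$ defined so far only on $S^+$ and on the discs $(D_n)$ to a quasiregular map on all of $\mathbb{C}$, while keeping the dilatation bound and the symmetry structure under control. First I would set up the appropriate infinite bipartite graph (or tree with unbounded valence) that the Folding Theorem takes as input: the preimage of $[-1,1]$ (or of a slightly enlarged neighborhood) should be a ``tree-like'' set $T\subset\mathbb{C}$ together with the specification that on each complementary unbounded Jordan domain $g$ behaves like $\exp$, and on each bounded complementary component corresponding to the discs $(D_n)$, $g$ behaves like the Disc-Component map $\iota^{w_n,\delta_n,m_n,R_n,(r_j^n)}$ of (\ref{composition}). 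The half-strip $S^+$ with the map $z\mapsto\sigma(\lambda\sinh z)$ is already in the ``model'' form that \cite{Bis15} requires outside a bounded set (where $\sigma=\exp$), so this piece is handed to the theorem directly; one then reflects everything across $\mathbb{R}$ and across $i\mathbb{R}$ using the two prescribed symmetries $z\mapsto\bar z$ and $z\mapsto -z$, so that the global graph is symmetric and the resulting $g$ inherits $g(-z)=g(z)$, $g(\bar z)=\overline{g(z)}$.

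Next I would address the dilatation bound. The Folding Theorem introduces quasiregular interpolation only in a bounded neighborhood of the tree $T$, and the key point of \cite{Bis15} (as exploited in \cite{MR3339086}, \cite{MR3579902}, \cite{2017arXiv171110629F}) is that the dilatation constant it produces depends only on the ``combinatorial'' separation data of the graph, not on the fine parameters. Inside the discs $D_n$ the dilatation is that of $\iota^{w_n,\delta_n,m_n,R_n,(r_j^n)}$, which by Lemma \ref{interpolation}, Proposition \ref{beta_extension}, and Lemma \ref{rho} is bounded by a constant independent of the parameters provided $\boldsymbol{m}(k)>m_0'''$, $0\le\boldsymbol{\delta}(k)<\delta_0$, $1\le\boldsymbol{R}(k)<3/2$, $(\boldsymbol{r}_j(k))\in E_{\sqrt[n_0]{\boldsymbol{R}(k)}}$, and $\boldsymbol{w}(k)\in\overline{D}(0,3/4)$; here $m_0'''$ is chosen to exceed all of $m_0,m_0',m_0''$ and the valence bound required by the Folding Theorem, and $n_0$ is as in Proposition \ref{beta_extension}. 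Taking $k_3<1$ to be the maximum of $k_0$ (Lemma \ref{interpolation}), $k_1$ (Proposition \ref{beta_extension}), $k_2$ (Lemma \ref{rho}), the dilatation of $\sigma$ and $\lambda\sinh$, and the Folding Theorem's interpolation constant, gives $\|g_{\bar z}/g_z\|_{L^\infty(\mathbb{C})}<k_3$ uniformly in all the allowed parameters and in $\lambda>1$.

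Finally I would identify the singular set. By construction $g$ is a local homeomorphism away from the critical points of the Disc-Component maps (the map on $S^+$ is a composition of $\sinh$, the scaling by $\lambda$, and $\sigma$, all of which are locally injective on the relevant region, and the Folding Theorem's interpolation introduces no new critical points — it only folds, which is the content of why \cite{Bis15} is used rather than a naive quasiconformal surgery). The critical points of $\psi_{\delta_k,m_k}$ are the $(m_k-1)^{\mathrm{st}}$ roots of $-\delta_k/m_k$, with critical values $\delta_k(-\delta_k/m_k)^{1/(m_k-1)}(m_k-1)/m_k\,\xi_j$, which are then moved by $\beta^{\Lambda(\delta_k,m_k)}$ and translated by $w_k$ via $\rho_{w_k}$, producing exactly the listed points $w_k+\delta_k(\delta_k/m_k)^{1/(m_k-1)}((m_k-1)/m_k)\xi_j$; the asymptotic values come from the $\exp$-type behavior over the unbounded complementary components, and these are $\pm 1$ (the two endpoints of $[-1,1]$, their finiteness being exactly the ``tract'' structure of the Folding Theorem), together with no other asymptotic values since every unbounded complementary domain maps like $\exp$ onto a half-plane complement. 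Applying the two symmetries then yields the stated full singular set. The main obstacle is the first step: correctly packaging the data $(S^+,\sigma(\lambda\sinh\cdot))$ and the discs $(D_n,\iota_n)$ into the precise hypotheses of the Folding Theorem — in particular verifying the separation/size conditions on the graph $T$ and the model $\exp$-tracts so that the interpolation constant is genuinely uniform — but since this is carried out in essentially the same way in \cite{Bis15}, \cite{MR3339086}, \cite{MR3579902}, \cite{2017arXiv171110629F}, I would cite those treatments for the details rather than reproduce them.
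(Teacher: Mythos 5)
Your proposal follows essentially the same route as the paper's proof: take $m_0'''=\max\{m_0,m_0',m_0''\}$ and $k_3=\max\{k_0,k_1,k_2\}$ (enlarged if necessary to absorb the folding constant), invoke Theorem 7.2 of \cite{Bis15} (as detailed in \cite{MR3339086}, \cite{2017arXiv171110629F}) for the extension, the global dilatation bound, and the identification of the singular set, with the symmetries built into the construction. One small correction: the folding step \emph{does} introduce new critical points, but their critical values lie in $\{\pm1\}$ — this, rather than an asymptotic-value count over the exp-tracts, is how $\pm1$ enters Bishop's accounting of the singular set — though since you defer to Theorem 7.2 of \cite{Bis15} for that statement anyway, this slip does not affect the argument.
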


\begin{proof} The proof closely resembles the proof of Theorem 4.1 of \cite{2017arXiv171110629F}, but we summarize it as it is essential to the proof of Theorem \ref{main_theorem}. The bound $||g_{\overline{z}}/g_z||_{L^\infty(\cup D_n)}<k_3$ follows from considering $m_0''':=\textrm{max}\{m_0, m_0', m_0''\}$, $\delta_0$, $n_0$, $k_3:=\textrm{max}\{k_0, k_1, k_2\}$ for constants as in Lemma \ref{interpolation}, Proposition \ref{beta_extension}, Lemma \ref{rho} and Proposition \ref{small_support}. The extension of $g$ and the bound on $||g_{\overline{z}}/g_z||_{L^\infty(\mathbb{C})}$ are consequences of Theorem 7.2 of \cite{Bis15} (perhaps by increasing $k_3$) as described in Section 17 of \cite{Bis15} (see also Section 3 of \cite{MR3339086}). The symmetry $g(-z)=g(z)$, $g(\overline{z})=\overline{g(z)}$ is built into the definition of $g$. The singular values 

\[ \left(w_k+\delta_k\left(\frac{\delta_k}{m_k}\right)^{\left(\frac{1}{m_k-1}\right)}\left(\frac{m_k-1}{m_k}\right)\xi_j \right)_{j=1}^{m_k-1} \] 

\noindent arise from the critical values of $g|_{D_k}$. That the only other singular values of $f$ are reflected copies of the above critical values and $\pm1$ follows from Theorem 7.2 of \cite{Bis15}.
\end{proof}

\begin{rem}\label{localized_dilatation} Let $V:= \cup_{n=1}^{\infty} D_n$, and let $U:=\mathbb{C}\setminus \left( V \cup V^* \cup -V \cup -V^* \right)$, where $V^*$ denotes the complex conjugate of the set $V$.  Note that the extension of $g$ in $U$ is independent of a choice of $\boldsymbol{\delta}$, $\boldsymbol{R}$, $(\boldsymbol{r}_j)$, $\boldsymbol{w}$ since varying these parameters does not change the definition of $g$ on $\partial D_n$.
\end{rem}

The remainder of Section \ref{model_map} is dedicated to recording a number of results which roughly state that, for a class of parameters (which we will call \emph{permissible}), the behavior of $g$ and $f:=g\circ\phi^{-1}$ is sufficiently similar in some precise sense which we will need for the proof of Theorem \ref{main_theorem}.

\begin{definition}\label{delta_permissible} Let $\delta_0, n_0$ be as given in Theorem \ref{g_extension}. We call the parameters $\boldsymbol{\delta}$, $\boldsymbol{R}$, $(\boldsymbol{r}_j)$, $\boldsymbol{w}$ \emph{permissible} if $0\leq\boldsymbol{\delta}(k)<\delta_0$, $1\leq\boldsymbol{R}(k)<3/2$, $(\boldsymbol{r}_j(k))_{j=1}^{m_k-1}\in E_{\log\hspace{-2mm}\sqrt[\leftroot{-5}\uproot{2}n_0]{\boldsymbol{R}(k)}}$, and $\boldsymbol{w}(k)\in \overline{D}(0,3/4)$ for all $k\in\mathbb{N}$.
\end{definition}

\begin{prop}\label{normalization} There exist $\lambda_0\in\mathbb{R}$, $\boldsymbol{m}_0\in\mathbb{N}^{\mathbb{N}}$, $\boldsymbol{s}_0 \in (0,1)^{\mathbb{N}}$ such that if $\boldsymbol{\delta}$, $\boldsymbol{R}$, $(\boldsymbol{r}_j)$, $\boldsymbol{w}$ are permissible, $\lambda>\lambda_0$, $\boldsymbol{m}\geq\boldsymbol{m}_0$, and $(\emph{supp }g_{\overline{z}}) \cap D_n \subset \{ z \in D_n : |z-z_n| > \boldsymbol{s}_0(n)\}$ for all $n\in\mathbb{N}$, then there exist constants $a_1, a_0, a_{-1} \in \mathbb{C}$ such that

\begin{equation}\label{normalization_equation} \phi(z)=a_1z+a_0+\frac{a_{-1}}{z} + O\left(\frac{1}{|z|^2}\right) \emph{   as   } z\rightarrow\infty,
\end{equation}

\noindent where $\phi$ is any quasiconformal mapping as in Theorem \ref{MRT} such that $g\circ\phi^{-1}$ is holomorphic.

\end{prop}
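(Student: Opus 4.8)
The goal is to show that the straightening map $\phi$ (normalized, say, to fix $0,1,\infty$) has an expansion of the form \eqref{normalization_equation} near infinity. The point is that $g$ is holomorphic, hence has zero dilatation, on the unbounded region $U$ of Remark \ref{localized_dilatation}; in particular $g_{\overline z}$ is supported in the bounded set $V\cup V^*\cup(-V)\cup(-V^*)$ together with the foldings from the Folding Theorem, all of which lie in a bounded region of $\mathbb{C}$ — this is where permissibility and the hypothesis $(\operatorname{supp} g_{\overline z})\cap D_n\subset\{|z-z_n|>\boldsymbol s_0(n)\}$ enter, to guarantee that the supports fit inside the discs $D_n$ and do not spread. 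Actually the essential structural input we need is simply that the Beltrami coefficient $\mu:=g_{\overline z}/g_z$ is \emph{compactly supported} (and has $\|\mu\|_\infty<1$, by Theorem \ref{g_extension}).

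First I would invoke the standard regularity theory for the Beltrami equation with compactly supported coefficient: if $\mu\in L^\infty(\mathbb{C})$ with $\|\mu\|_\infty<1$ and $\operatorname{supp}\mu\subset\overline D(0,\rho)$ for some $\rho>0$, then the principal solution $\Phi$ of $\Phi_{\overline z}=\mu\Phi_z$ — i.e.\ the quasiconformal homeomorphism of $\mathbb{C}$ with $\Phi(z)=z+o(1)$ as $z\to\infty$ — is in fact \emph{conformal} on $\mathbb{C}\setminus\overline D(0,\rho)$, and hence, being injective and holomorphic near $\infty$ with $\Phi(\infty)=\infty$, admits a Laurent expansion $\Phi(z)=z+b_0+b_{-1}/z+b_{-2}/z^2+\cdots$ convergent for $|z|$ large. (This is classical; see Chapter 4 of \cite{MR2245223}, the same reference already cited for Theorems \ref{MRT} and \ref{parameters}.) The reason $\Phi$ is conformal off the support is that $\Phi_{\overline z}=\mu\Phi_z=0$ a.e.\ on $\mathbb{C}\setminus\operatorname{supp}\mu$, and a quasiconformal map with vanishing dilatation on an open set is holomorphic there by Weyl's lemma.

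Next, $\phi$ in the statement is any quasiconformal solution of $\phi_{\overline z}/\phi_z=\mu$ such that $g\circ\phi^{-1}$ is holomorphic — by the uniqueness clause of Theorem \ref{MRT}, any two such differ by post-composition with a conformal automorphism of $\mathbb{C}$, i.e.\ an affine map $z\mapsto \alpha z+\gamma$. Writing $\phi=\alpha\Phi+\gamma$ with $\Phi$ the principal solution above, and substituting the Laurent expansion of $\Phi$, gives
\[
\phi(z)=\alpha\Bigl(z+b_0+\frac{b_{-1}}{z}+\frac{b_{-2}}{z^2}+\cdots\Bigr)+\gamma
= \alpha z + (\alpha b_0+\gamma) + \frac{\alpha b_{-1}}{z}+O\!\left(\frac{1}{|z|^2}\right),
\]
so setting $a_1:=\alpha$, $a_0:=\alpha b_0+\gamma$, $a_{-1}:=\alpha b_{-1}$ yields exactly \eqref{normalization_equation}. (If $\phi$ is taken with a fixed normalization such as $\phi(0)=0$, $\phi(1)=1$, $\phi(\infty)=\infty$, then $\alpha,\gamma$ are determined, but we do not need this.) The constants $\lambda_0,\boldsymbol m_0,\boldsymbol s_0$ are produced exactly so that $g$ of \eqref{quasiregular_definition} extends (via Theorem \ref{g_extension}) with the folding supports and the disc supports all contained in a fixed bounded region — one takes $\lambda_0,\boldsymbol m_0$ large enough and $\boldsymbol s_0(n)$ close enough to $1$ that Proposition \ref{small_support} applies in each $D_n$ and the Folding Theorem's added dilatation lives in the prescribed bounded neighborhood of the real axis; I would simply cite \cite{Bis15} (as in the proof of Theorem \ref{g_extension}) for the boundedness of the folding support.

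\textbf{Main obstacle.} The only genuine content is the compact support of $\mu$, and the subtlety there is the Folding Theorem: the extension of $g$ supplied by \cite{Bis15} introduces dilatation not just inside the $D_n$ but in a neighborhood of the boundary curve along which folding is performed, and one must be sure this neighborhood is bounded (it is — the folding is done in a bounded portion of the configuration, with $g=\exp$ unperturbed far out to the right, and $g$ holomorphic in $U$ by Remark \ref{localized_dilatation}). Once compact support is in hand, the passage to the Laurent expansion is routine classical quasiconformal theory, and the reduction of a general $\phi$ to the principal solution via an affine change is immediate from Theorem \ref{MRT}. I would therefore spend essentially all the write-up verifying $\operatorname{supp}\mu$ is bounded and then quote the expansion of the principal solution.
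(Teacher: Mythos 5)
There is a genuine gap: your entire argument rests on the claim that $\mu=g_{\overline z}/g_z$ is compactly supported, and in this construction that is false. The discs $D_n=\overline D(z_n,1)$ have centers $z_n\sim n\pi+i\pi$ marching off to infinity, and for permissible parameters (which is all that Proposition \ref{normalization} assumes) the dilatation of $g|_{D_n}$ is nonzero in the annulus $\{\boldsymbol s_0(n)<|z-z_n|<1\}$ for infinitely many $n$; in addition, the folding correction from \cite{Bis15} produces dilatation in a neighborhood of the \emph{unbounded} boundary $\partial S^+$ (cf.\ Remark \ref{choice_of_lambda}: $\mathrm{supp}(g_{\overline z})\cap S^+\subset\{z:\operatorname{dist}(z,\partial S^+)<1/16\}$, a set reaching out to infinity), and $\sigma$ is only equal to $\exp$ for $\mathrm{Re}(z)>2\pi$ in the image variable, so $g|_{S^+}$ is genuinely quasiregular near $\partial S^+$ arbitrarily far out. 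You also misread Remark \ref{localized_dilatation}: it says the extension of $g$ on $U$ is \emph{independent of the parameters} $\boldsymbol\delta,\boldsymbol R,(\boldsymbol r_j),\boldsymbol w$, not that $g$ is holomorphic on $U$. Consequently the principal solution is not conformal in any neighborhood of $\infty$, and the Laurent-expansion step, which is the heart of your proof, has nothing to stand on.

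The actual argument (the paper defers to the proof of Proposition 4.4 of \cite{2017arXiv171110629F}) must cope with a non-compactly supported $\mu$ whose support is merely \emph{sparse} near infinity: the folding dilatation lives in an exponentially thin horn along $\partial S^+$ (the $\tau$-edges have Euclidean size comparable to $e^{-x}$ because $|(\lambda\sinh)'|\asymp\lambda e^{x}$), and the annuli $\{\boldsymbol s_0(n)<|z-z_n|<1\}$ are made thin by requiring $\boldsymbol s_0(n)\to 1^-$ sufficiently fast — this is exactly the hypothesis the present paper adds, and it is invisible in your write-up because compact support would make it unnecessary. One then applies a pointwise-regularity theorem for quasiconformal maps at a point (after inverting $z\mapsto 1/z$, a Teichm\"uller--Wittich--Belinskii/Dyn'kin-type result, cf.\ \cite{MR1466801}) whose integral decay hypotheses are verified from this sparseness; that is what yields the three-term expansion $a_1z+a_0+a_{-1}/z+O(|z|^{-2})$. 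Your reduction of a general $\phi$ to a fixed solution via the uniqueness clause of Theorem \ref{MRT} is fine, but the key analytic step needs to be replaced as above; as written, the proof fails at its first structural claim.
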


%\begin{proof} This is a consequence of Theorem 7.2 of \cite{Bis15} and a Theorem of Dyn'kin \cite{MR1466801}. The statement (\ref{normalization_equation}) follows from \cite{MR1466801} once we know that 

%\begin{equation}\label{normalization_equation_2} \int_{\textrm{supp}(g_{\overline{z}}) \cap (|z|>n)} \frac{\textrm{d}A(z)}{|z|^2} = O(e^{-cn}) \textrm{ as } n\rightarrow\infty
%\end{equation}

%\noindent for some $c>0$, where $\textrm{d}A$ is area measure on $\mathbb{C}$ (see also the discussion in Section 8 of \cite{Bis15}). The estimate (\ref{normalization_equation_2}) follows on $\textrm{supp}(g_{\overline{z}})\cap (|z|>n)\cap (\cup D_n)$ by requiring $\boldsymbol{s}_0(n) \rightarrow 1^-$ as $n\rightarrow\infty$ sufficiently quickly. On  $\textrm{supp}(g_{\overline{z}})\cap(|z|>n)\setminus (\cup D_n)$,  (\ref{normalization_equation_2}) follows from the extension of $g$ as in Section 17 of \cite{Bis15} for sufficiently large $\lambda$, $\boldsymbol{m}$.

%\end{proof}

\noindent The proof of Proposition 4.4 in \cite{2017arXiv171110629F} applies once one requires $\boldsymbol{s}_0(n)\rightarrow1^{-}$ sufficiently quickly as $n\rightarrow\infty$.

\begin{rem} Given $\phi$ as in Proposition \ref{normalization} satisfying (\ref{normalization_equation}), we may normalize $\phi$ so that:

\begin{equation}\label{hydrodynamical}  \phi(z)=z+\frac{a}{z} + O\left(\frac{1}{|z|^2}\right) \emph{   as   } z\rightarrow\infty
\end{equation}

\noindent for some $a\in\mathbb{C}$. This is the normalization we will always use henceforth. 
%Given parameters $\lambda, \boldsymbol{w}, \boldsymbol{r}, (\boldsymbol{r}_j)_{j=1}^m, \boldsymbol{\delta}, \boldsymbol{m}$ satisfying 

\end{rem}

\begin{prop}\label{close_to_id} For any $C>0$, $\varepsilon>0$, $T\geq1$, there exist $\lambda_0\in\mathbb{R}$, $\boldsymbol{m}_0\in\mathbb{N}^{\mathbb{N}}$, $\boldsymbol{s}_0 \in (0,1)^{\mathbb{N}}$, such that if $\lambda>\lambda_0$, $\boldsymbol{m}\geq\boldsymbol{m}_0$, the parameters $\boldsymbol{\delta}$, $\boldsymbol{R}$, $(\boldsymbol{r}_j)$, $\boldsymbol{w}$ are permissible, and $(\emph{supp }g_{\overline{z}}) \cap D_n \subset \{ z \in D_n : |z-z_n| > \boldsymbol{s}_0(n)\}$ for all $n\in\mathbb{N}$, then there exists a quasiconformal mapping $\phi: \mathbb{C} \rightarrow\mathbb{C}$ satisfying (\ref{hydrodynamical}) such that $g\circ\phi^{-1}$ is holomorphic and: 

\begin{equation}\label{qc_map_estimate} \left| \phi(z)-z \right| < \frac{C}{|z|} \emph{ for } |z|>T, \emph{ and }
\end{equation}
\begin{equation}\label{qc_map_estimate2} \left| \phi(z)-z \right| < \varepsilon \emph{ for all } z\in\mathbb{C}.
\end{equation}

\end{prop}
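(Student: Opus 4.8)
The plan is to obtain $\phi$ by applying the Measurable Riemann Mapping Theorem (Theorem \ref{MRT}) to the Beltrami coefficient $\mu := g_{\overline{z}}/g_z$ of the extended quasiregular map $g$ from Theorem \ref{g_extension}, normalized as in (\ref{hydrodynamical}); holomorphicity of $f := g\circ\phi^{-1}$ is then automatic from the Stoilow factorization, since $g$ and $\phi$ share the same dilatation. The content of the proposition is the two quantitative estimates (\ref{qc_map_estimate}) and (\ref{qc_map_estimate2}), which are to be extracted from Proposition \ref{normalization}. By that proposition (applicable once $\boldsymbol{s}_0(n)\to1^-$ fast enough, $\lambda>\lambda_0$, $\boldsymbol{m}\geq\boldsymbol{m}_0$, and the parameters are permissible), $\phi$ admits the expansion $\phi(z)=z+a/z+O(1/|z|^2)$ as $z\to\infty$. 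Hence there is a radius $T_0$ and a constant $M$, both depending only on the above data, with $|\phi(z)-z|\leq M/|z|$ for $|z|>T_0$. The point of the estimates is that $M$ and $T_0$ can be driven down by pushing the support of $\mu$ out toward infinity and by taking $\lambda$ large: as $\lambda$ increases and $\boldsymbol{s}_0(n)\to1^-$, the set $\operatorname{supp}\mu = \operatorname{supp} g_{\overline z}$ (which is contained in the discs $D_n$ near their boundaries, their reflected copies, and the ``folding'' region near the negative axis, all pushed far from the origin) recedes; a normal-families / continuous-dependence argument (Theorem \ref{parameters}, together with the fact that $\|\mu\|_\infty<k_3$ is uniform) then forces $\phi$ to be uniformly close to the identity.

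Concretely, I would proceed as follows. First, fix the target constants $C,\varepsilon,T$. Apply Proposition \ref{normalization} to get $\lambda_0, \boldsymbol{m}_0, \boldsymbol{s}_0$ guaranteeing the asymptotic expansion; this yields some radius $T_1\geq T$ and constant $C_1$ with $|\phi(z)-z|<C_1/|z|$ for $|z|>T_1$. Second, to get the \emph{sharp} constant $C$ in (\ref{qc_map_estimate}) I would argue by contradiction via compactness: if for a sequence of admissible parameter choices with $\lambda^{(k)}\to\infty$ (and $\boldsymbol{s}_0^{(k)}(n)\to 1^-$ suitably) the corresponding $\phi^{(k)}$ failed the bound, then since $\operatorname{supp}\mu^{(k)}$ exits every compact set, $\mu^{(k)}\to 0$ a.e.; by Theorem \ref{parameters} the normalized solutions $\phi^{(k)}\to\mathrm{id}$ uniformly on compacta, and a standard estimate (writing $\phi^{(k)}(z)-z$ via the Cauchy transform of $(\phi^{(k)})_{\overline z}$, whose support is far out) upgrades this to the decay bound $|\phi^{(k)}(z)-z|<C/|z|$ for $|z|>T$ once $k$ is large — contradiction. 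This pins down $\lambda_0$, $\boldsymbol m_0$, $\boldsymbol s_0$ (possibly enlarging them) for which (\ref{qc_map_estimate}) holds. Third, (\ref{qc_map_estimate2}) for all $z\in\mathbb{C}$: on $|z|>T$ it follows from (\ref{qc_map_estimate}) (shrink $T$ or note $C/|z|<\varepsilon$ there), and on the compact set $|z|\leq T$ it follows from the same continuous-dependence argument — $\phi^{(k)}\to\mathrm{id}$ uniformly on $\overline{D}(0,T)$ — so $\sup_{|z|\leq T}|\phi^{(k)}(z)-z|<\varepsilon$ for large $k$.

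The main obstacle is the interplay between the three controls: the constant in the $O(1/|z|^2)$ term of the expansion, the location of $\operatorname{supp}\mu$, and the normalization. One must check that pushing $\operatorname{supp} g_{\overline z}$ out (by taking $\boldsymbol{s}_0(n)\to 1^-$ and $\boldsymbol m \geq \boldsymbol m_0$ large, so that by Proposition \ref{small_support} the dilatation support inside each $D_n$ sits in $\{|z-z_n|>\boldsymbol s_0(n)\}$) genuinely forces $\phi$ close to the identity \emph{uniformly over all permissible} $\boldsymbol\delta,\boldsymbol R,(\boldsymbol r_j),\boldsymbol w$ — this is where the uniform bound $\|\mu\|_\infty<k_3$ from Theorem \ref{g_extension} and the uniformity in Theorem \ref{parameters} are essential, since we are not free to choose the perturbation parameters (they will be fixed later by the Brouwer argument). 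A secondary technical point is justifying the passage from ``$\phi^{(k)}\to\mathrm{id}$ uniformly on compacta'' to the explicit decay rate $C/|z|$ outside a compact set; this is handled by the Cauchy-transform representation $\phi(z)=z + \frac{1}{\pi}\!\int\! \frac{h(\zeta)}{\zeta-z}\,dA(\zeta)$ for the Stoilow-type correction, using that the relevant density is supported far from the origin and has controlled $L^p$ norm, so its Cauchy transform decays like $1/|z|$ with a small constant.
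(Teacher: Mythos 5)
Your overall route is the one the paper intends: the paper gives no independent argument for this proposition, deferring to ``the same normal family argument as the proof of Proposition 4.6 in \cite{2017arXiv171110629F}'', and your contradiction-plus-compactness scheme (uniform bound $\lVert\mu\rVert_\infty<k_3$ from Theorem \ref{g_extension}, continuous dependence/normality forcing $\phi^{(k)}\to\mathrm{id}$ locally uniformly, then upgrading to the decay estimate, with uniformity over the permissible $\boldsymbol\delta,\boldsymbol R,(\boldsymbol r_j),\boldsymbol w$ correctly identified as the essential point) is exactly that argument.

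One caveat, because it touches the two load-bearing steps: your stated reasons for $\mu^{(k)}\to0$ a.e.\ and for the $C/|z|$ decay are not accurate as written. The support of $g_{\overline z}$ does \emph{not} exit every compact set and is \emph{not} supported far from the origin: it always contains thin collars $\{\boldsymbol s_0(n)<|z-z_n|<1\}$ of every $\partial D_n$ (including small $n$), their reflected copies, and a neighborhood of $\partial S^+$ coming from the folding, and it is unbounded since the discs $D_n$ march off to infinity. What is true, and what the argument actually uses, is that as $\lambda\to\infty$ and $\boldsymbol s_0(n)\to1^-$ the support shrinks onto a set of measure zero (the circles $\partial D_n$, their reflections, and $\partial S^+$), which gives $\mu^{(k)}\to0$ a.e.; and the passage to the decay bound on the unbounded region $|z|>T$ is not a matter of the density sitting far out, but of its (suitably weighted) area being small, i.e.\ of $1-\boldsymbol s_0(n)$ tending to $0$ sufficiently fast so that the far-away collars contribute negligibly to the Cauchy-transform estimate at moderate $|z|$ --- this is precisely the ``$\boldsymbol s_0(n)\to1^-$ sufficiently quickly'' condition the paper imports for Proposition \ref{normalization}, and it must be built into the choice of $\boldsymbol s_0$ here as well. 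With those two justifications replaced, your proof is the paper's.
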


\noindent Again, the proof is the same normal family argument as the proof of Proposition 4.6 in \cite{2017arXiv171110629F}. The proofs of Proposition \ref{prop_on_derivative_of_f} and Corollary \ref{dist_to_1/2} below are also the same as the proofs of Proposition 4.11 and Corollary 4.14 (respectively) in \cite{2017arXiv171110629F}, and hence are omitted.

\begin{definition}\label{s_definition} Let $\lambda_0$, $\boldsymbol{m}_0$, $\boldsymbol{s}_0$ be as given in Proposition \ref{close_to_id} for $\varepsilon=\varepsilon_0:=1/32$, and $C=T=1$. We call the parameters $\lambda$, $\boldsymbol{m}$ \emph{permissible} if $\lambda>\lambda_0$ and $\boldsymbol{m}(k)>\boldsymbol{m_0}(k)$, for all $k\in\mathbb{N}$.
\end{definition}

\begin{rem}\label{Permissibility_remark} Permissible parameters $\lambda$, $\boldsymbol{\delta}$, $\boldsymbol{R}$, $(\boldsymbol{r}_j)$, $\boldsymbol{m}$, $\boldsymbol{w}$ determine a quasiregular function $g$ via (\ref{quasiregular_definition}) and Theorem \ref{g_extension}. If, in addition,  $(\textrm{supp }g_{\overline{z}}) \cap D_n \subset \{ z \in D_n : |z-z_n| > \boldsymbol{s}_0(n)\}$ for all $n\in\mathbb{N}$, then $\phi$ satisfies (\ref{qc_map_estimate}) and (\ref{qc_map_estimate2}) with $C=T=1$ and $\varepsilon=1/32$, where $\phi$ is a quasiconformal map normalized as in (\ref{hydrodynamical}) such that $g\circ\phi^{-1}$ is holomorphic.

\end{rem}

\begin{rem} We will henceforth begin considering the local inverse $g^{-1}$, which will always be defined in a neighborhood of $g(x)=y$ with $x,y>0$ such that $g^{-1}(y)=x$. There are no positive critical points of $g$ by (\ref{quasiregular_definition}) so that this inverse is always well defined, at least locally near $y$. The domain of the branch $g^{-1}$ will always map (under $g^{-1}$) to a subset of $S^+$. The same remarks apply to the local inverse $f^{-1}$.
\end{rem}

\begin{prop}\label{prop_on_derivative_of_f} Suppose that $g'(x)\geq2$ for $x\geq1/32$, that $\lambda$, $\boldsymbol{\delta}$, $\boldsymbol{R}$, $(\boldsymbol{r}_j)$, $\boldsymbol{m}$, $\boldsymbol{w}$ are permissible, and $\textrm{supp}(g_{\overline{z}}) \cap D_n \subset \{ z \in D_n : |z-z_n| > \boldsymbol{s}_0(n)\}$ for all $n\in\mathbb{N}$. Assume furthermore that $\textrm{supp}(g_{\overline{z}})\cap S^+\subset\{z\in S^+: \dist(z,\partial S^+) < 1/16\}$. Then 

%\begin{align}\label{derivative_of_f} {\scriptstyle \left( \frac{(1/8)^2(1/4-2\varepsilon_0)}{(3/8)^2(1/4)} \right)^n \cdot \prod_{k=1}^n (g^{-1})'(g^k(1/2)+2\varepsilon_0) \leq \left| (f^{-n})'(g^n(1/2)) \right| 
% \leq \left( \frac{(5/8)^2(1/4+2\varepsilon_0)}{(3/8)^2(1/4)\lambda} \right)^n\cdot\frac{1}{\lambda-(\varepsilon_0/\lambda^{n-2})} }.
%\end{align}

\begin{align}\label{derivative_of_f} \left( \frac{(1/8)^2(1/4-2\varepsilon_0)}{(3/8)^2(1/4)} \right)^n \cdot \prod_{k=1}^n (g^{-1})'(g^k(1/2)+2\varepsilon_0) \leq \\ \left| (f^{-n})'(g^n(1/2)) \right|  \leq  \label{derivative_of_f1} \\
 \left( \frac{(5/8)^2(1/4+2\varepsilon_0)}{(3/8)^2(1/4)\lambda} \right)^n\cdot\frac{1}{\lambda-(\varepsilon_0/\lambda^{n-2})}. \label{derivative_of_f2}
\end{align}

%\begin{eqnarray*}\label{bound_2}
%\left| \frac{w_2-w_1}{z_2-z_1} \right|
%& = & 

\end{prop}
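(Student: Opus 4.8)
The plan is to estimate $|(f^{-n})'(g^n(1/2))|$ by decomposing the inverse branch $f^{-n}$ as a composition $f^{-1}\circ\cdots\circ f^{-1}$ of $n$ single inverse branches and tracking the derivative along the orbit $1/2 = g^0(1/2), g(1/2), g^2(1/2), \ldots$. First I would observe that since $f = g\circ\phi^{-1}$, we have $f^{-1} = \phi\circ g^{-1}$ on the relevant positive real neighborhoods, so $f^{-n} = \phi\circ g^{-1}\circ\phi\circ g^{-1}\circ\cdots\circ\phi\circ g^{-1}$, and by the chain rule $|(f^{-n})'(g^n(1/2))|$ factors as a product of terms $|\phi'(\cdot)|\cdot|(g^{-1})'(\cdot)|$ evaluated at the successive images. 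The role of the hypothesis $\mathrm{supp}(g_{\overline z})\cap S^+\subset\{z: \dist(z,\partial S^+)<1/16\}$ is that along the real axis $g$ (hence $g^{-1}$) is holomorphic, and the role of $\lambda$ large and $g'(x)\ge 2$ for $x\ge 1/32$ is to guarantee the orbit moves consistently to the right and that the inverse branch is a well-defined contraction.

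Next I would use the normalization of $\phi$ from Remark/Proposition \ref{close_to_id} and Remark \ref{Permissibility_remark}: since $\phi$ satisfies $|\phi(z)-z|<1/32 = \varepsilon_0$ everywhere and $|\phi(z)-z|<1/|z|$ for $|z|>1$, the points $\phi(x)$ for $x>0$ lie within $2\varepsilon_0$ of $x$ (accounting for both the perturbation and the fact that we are tracking $g$-orbits vs $f$-orbits). This is why the bounds in \eqref{derivative_of_f}--\eqref{derivative_of_f2} involve $(g^{-1})'$ evaluated at the shifted points $g^k(1/2)+2\varepsilon_0$ rather than exactly at $g^k(1/2)$: the $f$-orbit point at step $k$ differs from the $g$-orbit point $g^k(1/2)$ by at most $2\varepsilon_0$, and since $(g^{-1})'$ is monotone on the positive reals (as $g$ is convex-ish/increasing there), one gets the one-sided comparison. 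The factors $\frac{(1/8)^2(1/4-2\varepsilon_0)}{(3/8)^2(1/4)}$ and $\frac{(5/8)^2(1/4+2\varepsilon_0)}{(3/8)^2(1/4)\lambda}$ should come from combining Koebe-type distortion (Theorem \ref{thm:Koebe}(a)) applied to the univalent branch $g^{-1}$ on a disc of radius comparable to $1/4$ around each orbit point — the $1/8$, $3/8$, $5/8$ being $\frac14 \mp \text{(displacement)}$ — together with the estimate $|\phi'|$ which one extracts by differentiating \eqref{hydrodynamical}, giving $|\phi'(z)-1| = O(1/|z|^2)$, and near $1/2$ a cruder bound; the $1/\lambda$ factor in the upper bound and the final $1/(\lambda-\varepsilon_0/\lambda^{n-2})$ term reflect that $g(z)=\sigma(\lambda\sinh z)$ has derivative growing with $\lambda$, so $(g^{-1})'$ near the image of $1/2$ contributes a $1/\lambda$.

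The main obstacle I anticipate is the careful bookkeeping at the \emph{first} step — near $z=1/2$, where $|\phi(z)-z|$ is not small in the relative sense $1/|z|$ but only in the absolute sense $\varepsilon_0$, so the distortion estimate must be set up on a fixed disc (radius $\sim 1/4$) about $1/2$ rather than using the asymptotic expansion, and one must verify this disc is contained in the domain of univalency of the relevant inverse branch and avoids the singular set; this is exactly where the hypothesis that $\mathrm{supp}(g_{\overline z})$ stays $1/16$-close to $\partial S^+$ and the assumption on $\mathrm{supp}(g_{\overline z})\cap D_n$ are used to ensure $g^{-1}$ is univalent and holomorphic on a definite neighborhood of the forward orbit of $1/2$. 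Once the single-step estimate
\[
\frac{(1/8)^2(1/4-2\varepsilon_0)}{(3/8)^2(1/4)}\,(g^{-1})'(g^k(1/2)+2\varepsilon_0)\ \le\ |(f^{-1})'(\text{step-}k\text{ point})|\ \le\ \frac{(5/8)^2(1/4+2\varepsilon_0)}{(3/8)^2(1/4)\lambda}
\]
is established uniformly in $k$ (with the extra $1/(\lambda-\varepsilon_0/\lambda^{n-2})$ absorbing the last-step correction), the proposition follows by taking the product over $k=1,\ldots,n$. Since, as the authors note, this argument is identical to the proof of Proposition 4.11 in \cite{2017arXiv171110629F}, I would cite that proof for the routine distortion computations and only highlight the modifications needed for the escaping-orbit setup here.
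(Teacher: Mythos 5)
Your plan is essentially the argument the paper relies on (it omits the proof, deferring to Proposition 4.11 of \cite{2017arXiv171110629F}): the chain-rule decomposition via $f^{-1}=\phi\circ g^{-1}$, the inductive fact that the backward orbit of $g^n(1/2)$ stays within $2\varepsilon_0$ of the real orbit points $g^k(1/2)$ (from $|\phi-\mathrm{id}|<\varepsilon_0$ and the contraction $(g^{-1})'\leq 1/2$ supplied by $g'\geq 2$), and Koebe distortion on fixed discs about the orbit, where the hypotheses on $\mathrm{supp}(g_{\overline{z}})$ make $g$, and hence $\phi$, conformal. One small correction that does not affect the architecture: you cannot get $|\phi'(z)-1|=O(1/|z|^2)$ by differentiating (\ref{hydrodynamical}), since $\phi$ is not conformal in any neighborhood of $\infty$ (its dilatation is supported in the discs $D_n$ and near $\partial S^+$ arbitrarily far out), and likewise monotonicity of $(g^{-1})'$ on $\mathbb{R}^+$ alone does not cover the complex evaluation points; both issues are handled by what you already invoke elsewhere, namely the uniform bound $|\phi-\mathrm{id}|<\varepsilon_0$, conformality of $\phi$ on a fixed-width neighborhood of the positive real axis, and the Koebe estimates of Theorems \ref{thm:Koebe} and \ref{thm:Koebe_argument}.
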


\begin{rem}\label{choice_of_lambda} We will henceforth fix $\lambda=\lambda_0$ as in Proposition \ref{close_to_id}, with several extra  conditions: we assume that $\lambda_0$ is sufficiently large so that $g'(x)\geq2$ for $x\geq 1/32$, and that $\lambda_0$ is sufficiently large so that $g^n(1/2)\rightarrow\infty$ as $n\rightarrow\infty$ (see Lemma 3.2 of \cite{MR3339086}). Furthermore, we assume $\lambda$ is sufficiently large so that $\textrm{supp}(g_{\overline{z}})\cap S^+\subset\{z\in S^+: \dist(z,\partial S^+) < 1/16\}$ (see the definition of $T(r_0)$ as in Theorem 1.1 of \cite{Bis15}). Lastly, we assume that $\lambda=\lambda_0$ is sufficiently large so that (\ref{derivative_of_f2}) tends to $0$ as $n\rightarrow\infty$. Note that (\ref{derivative_of_f}) and (\ref{derivative_of_f2}) are independent of permissible $\boldsymbol{\delta}$, $\boldsymbol{R}$, $(\boldsymbol{r}_j)$, $\boldsymbol{m}$, $\boldsymbol{w}$. Furthermore, observe that the map $g|_{S+}$ and the points $z_n$ as in (\ref{quasiregular_definition}) are now both fixed henceforth as they depend only on $\lambda$.

%We define the sequence of positive real numbers $(C_n)_{n=1}^{\infty}$ as the lower bounds in (\ref{derivative_of_f}).
\end{rem}

\begin{definition}\label{sequence_along_which_we} Define the sequence $(\tilde{p}_n)_{n=1}^{\infty}$ such that $|z_{\tilde{p}_n}-g^n(1/2)|$ is minimized. \end{definition}

\begin{rem} The reason for the $\sim$ notation in Definition \ref{sequence_along_which_we} is that we will later more frequently use a subsequence of $(\tilde{p}_n)_{n=1}^\infty$ in Section \ref{fixpoint_section}: see Remark \ref{referee_suggestion}. It is for this subsequence of $(\tilde{p}_n)_{n=1}^\infty$ that we will later reserve the notation $(p_n)_{n=1}^\infty$. We also remark here that the sequence $(\tilde{p}_n)$ depends only on $\lambda$ (and in particular is independent of $\boldsymbol{\delta}$, $\boldsymbol{R}$, $(\boldsymbol{r}_j)$, $\boldsymbol{m}$, $\boldsymbol{w}$), and $\lambda$ has been fixed in Remark \ref{choice_of_lambda}.

% for which we will reserve the notation $(p_n)_{n=1}^\infty$: see Remark \ref{referee_suggestion}.
\end{rem}

\begin{cor}\label{dist_to_1/2} There exists $n'\in\mathbb{N}$ such that if $\boldsymbol{\delta}$, $\boldsymbol{R}$, $(\boldsymbol{r}_j)$, $\boldsymbol{m}$, $\boldsymbol{w}$ are permissible, and $(\textrm{supp }g_{\overline{z}}) \cap D_k \subset \{ z \in D_k : |z-z_k| > \boldsymbol{s}_0(k)\}$ for all $k\in\mathbb{N}$, then $f^{-n}(D_{\tilde{p}_n})\subset \overline{D}(1/2, 1/8) \subset \overline{D}(0, 3/4)$ for all $n>n'$.
\end{cor}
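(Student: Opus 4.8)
The plan is to show that $f^{-n}(D_{\tilde{p}_n})$ is a small set close to $1/2$ by combining three ingredients: (i) the derivative estimate (\ref{derivative_of_f2}) from Proposition \ref{prop_on_derivative_of_f}, which gives an exponentially decaying upper bound on $|(f^{-n})'(g^n(1/2))|$; (ii) the fact that $f^{-n}(g^n(1/2))$ is close to $1/2$; and (iii) a Koebe-type distortion argument (Theorem \ref{thm:Koebe}) to pass from control of the derivative at one point to control of the image of the entire disc $D_{\tilde{p}_n}$ under the univalent branch $f^{-n}$.

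First I would estimate $|f^{-n}(g^n(1/2)) - 1/2|$. The point $g^n(1/2)$ is a positive real, and by construction (Definition \ref{sequence_along_which_we}) the center $z_{\tilde{p}_n}$ of $D_{\tilde{p}_n}$ is the nearest point $z_k$ to $g^n(1/2)$; since consecutive $z_k$ are spaced roughly $\pi$ apart, $g^n(1/2)$ lies within a bounded distance of $z_{\tilde p_n}$, so $D_{\tilde p_n}$ contains $g^n(1/2)+O(1)$ and, in particular, contains a definite neighborhood of $g^n(1/2)$ along the real axis. Next, one tracks the orbit backwards: since $f$ and $g$ agree up to precomposition with $\phi^{-1}$, and $\phi$ satisfies the estimate (\ref{qc_map_estimate}) with $C=1$ (Remark \ref{Permissibility_remark}), one has $|f(x) - g(x)|$ controlled for $x$ large; iterating the inverse branch $f^{-1}$ (which on positive reals is expanding, $g'(x)\ge 2$, by Remark \ref{choice_of_lambda}) and using that $f^{-1}$ is a contraction there by a factor $\le 1/2$, the errors accumulate geometrically and one gets $f^{-n}(g^n(1/2)) \to 1/2$; more precisely it stays within, say, $\varepsilon_0 = 1/32$ of $1/2$, matching the constant $2\varepsilon_0$ appearing in (\ref{derivative_of_f}). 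This is essentially the content built into Proposition \ref{prop_on_derivative_of_f}, so I would invoke it rather than redo the orbit estimate.

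Then I would apply Theorem \ref{thm:Koebe}(a) to the univalent branch $F := f^{-n}$ on a disc $D(a, r)$ with $a = g^n(1/2)$ and $r$ chosen so that $D_{\tilde{p}_n} \subset D(a, 2r)$ with $D(a,r)$ still inside the domain of the branch; since $|z_{\tilde p_n} - g^n(1/2)|$ is bounded, one can take $r$ to be a fixed constant (say $r = 3$), and then $|z - a| \le $ const on $D_{\tilde p_n}$. Theorem \ref{thm:Koebe}(a) gives $|F(z) - F(a)| \le \frac{r^2}{(r-|z-a|)^2}|F'(a)|\,|z-a|$, so $\operatorname{diam} F(D_{\tilde p_n}) \le \text{const} \cdot |F'(a)| = \text{const}\cdot|(f^{-n})'(g^n(1/2))|$, which by (\ref{derivative_of_f2}) tends to $0$ as $n\to\infty$ (Remark \ref{choice_of_lambda} fixed $\lambda$ large enough for exactly this). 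Combining: $f^{-n}(D_{\tilde p_n})$ is contained in a disc of radius $o(1)$ centered at $f^{-n}(g^n(1/2))$, which lies within $\varepsilon_0$ of $1/2$; hence for all $n$ larger than some $n'$, $f^{-n}(D_{\tilde p_n}) \subset \overline{D}(1/2, 1/8) \subset \overline{D}(0,3/4)$.

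The main obstacle is ensuring the branch $f^{-n}$ is genuinely univalent on a fixed-size disc around $g^n(1/2)$ — i.e., that the Koebe hypothesis is satisfied uniformly in $n$ — and that the "const" in the distortion bound does not itself grow with $n$. This requires knowing that the relevant pullback domains stay within $S^+$ (guaranteed by the convention on the branch $g^{-1}$, and transferred to $f^{-1}$) and away from the positive critical values, which is where the hypothesis $\operatorname{supp}(g_{\overline z}) \cap S^+ \subset \{z : \operatorname{dist}(z, \partial S^+) < 1/16\}$ and the permissibility conditions enter. Since the argument is identical to the proof of Corollary 4.14 of \cite{2017arXiv171110629F}, I would simply cite that proof, noting only that the decay of (\ref{derivative_of_f2}) — arranged in Remark \ref{choice_of_lambda} — is what forces the containment for $n > n'$.
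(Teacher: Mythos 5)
Your proposal follows essentially the same route as the paper, which in fact omits the proof entirely and refers to Proposition 4.11 and Corollary 4.14 of \cite{2017arXiv171110629F}: the argument there is exactly your combination of the decay of $|(f^{-n})'(g^{n}(1/2))|$ from (\ref{derivative_of_f2}) (arranged via the choice of $\lambda$ in Remark \ref{choice_of_lambda}), the fact that the backward orbit point $f^{-n}(g^n(1/2))$ stays within $2\varepsilon_0=1/16$ of $1/2$, and Koebe distortion for the univalent branch of $f^{-n}$ on a disc of definite size containing $D_{\tilde p_n}$. Your specific numerical choices (e.g.\ taking $r=3$ while only assuming $D_{\tilde p_n}\subset D(a,2r)$, whereas $|z_{\tilde p_n}-g^n(1/2)|+1$ can exceed $4$, so the univalence disc must be taken somewhat larger) would need adjusting, but since you defer these details to the cited proof, the approach is the intended one.
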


%\begin{proof} First note that $|f^{-n}(g^n(1/2))-1/2| < 2\varepsilon_0<1/16$ by the proof of Proposition \ref{prop_on_derivative_of_f}. We use (\ref{derivative_of_f}) and Theorem \ref{thm:Koebe}(a) with $F=f^{-n}$, $z\in D_{p_n}$, $r=10$, $a=g^n(1/2)$ to yield:

%\begin{equation}\label{dist_bound} \left| f^{-n}(z) - f^{-n}(g^n(1/2)) \right| \leq \frac{10^2\left( 3\pi/2 + 1 \right)}{\left(10-\left( 3\pi/2 + 1 \right)\right)^2}  \left(\frac{4(\pi/2+1)^2(1+2\varepsilon_0)}{\pi^2\lambda}\right)^n\frac{1}{\lambda-(\varepsilon_0/\lambda^{n-2})}. \nonumber
%\end{equation}

%\noindent Note that the right-hand side tends to $0$ as $n\rightarrow\infty$ by the choice of $\lambda=\lambda_0$ as in Remark \ref{choice_of_lambda}, so that for large $n$ we have the conclusion of Corollary \ref{dist_to_1/2} as needed.

%\end{proof}

\section{Quasiconformal Surgery and Fixpoints}
\label{fixpoint_section}

Recall that the function $g$ as defined in (\ref{quasiregular_definition}) and Theorem \ref{g_extension} depended on parameters $\lambda$, $\boldsymbol{\delta}$, $\boldsymbol{R}$, $(\boldsymbol{r}_j)$, $\boldsymbol{m}$, $\boldsymbol{w}$. Our goal is to assign values to the parameters $\lambda$, $\boldsymbol{\delta}$, $\boldsymbol{R}$, $(\boldsymbol{r}_j)$, $\boldsymbol{m}$, $\boldsymbol{w}$ such that the associated entire function $f:=g\circ\phi^{-1}$ is as in Theorem \ref{main_theorem}.   We have already fixed $\lambda$ in Remark \ref{choice_of_lambda}, and in Proposition \ref{selection_of_r_m} below we will assign values to the parameters $\boldsymbol{R}$, $\boldsymbol{m}$ by an inductive procedure. Later in this Section the parameters $\boldsymbol{\delta}$, $(\boldsymbol{r}_j)$, $\boldsymbol{w}$ will be assigned in Proposition \ref{penultimate} using the fixpoint Theorem \ref{fixpoint}. It is in Theorems \ref{fixpoint} and \ref{parameters}, used in this Section to control the orbits of singular values, where the approach in the present work differs most notably from \cite{2017arXiv171110629F}.

\begin{rem}\label{purely_expository} The following remark is purely expository and is meant to motivate the more technical aspects of Section \ref{fixpoint_section}.

 The difficulty of the proof of Theorem \ref{main_theorem} lies in choosing parameters for $g$ such that the critical values of $f:=g\circ\phi^{-1}$ escape to $\infty$, while still ensuring $f$ possesses the desired wandering domain. Figure  \ref{fig:general_strat} illustrates the desired behavior: we wish to choose parameters $\boldsymbol{\delta}$, $\boldsymbol{R}$, $(\boldsymbol{r}_j)$, $\boldsymbol{m}$, $\boldsymbol{w}$ so that, for instance, the critical points of $f|_{\phi(D_{1})}$ are eventually mapped (under $f$) to $+1$, whence $f^n(1)\rightarrow\infty$ as $n\rightarrow\infty$. It is not evident, however, how such a choice of parameters is to be achieved: the iterates of the critical points of $f|_{\phi(D_1)}$ \emph{depend} on a choice of the parameters $\boldsymbol{\delta}$, $\boldsymbol{R}$, $(\boldsymbol{r}_j)$, $\boldsymbol{m}$, $\boldsymbol{w}$. Moreover, this dependence is not explicit: the Beltrami coefficient of $g$, and hence the behavior of the correction map $\phi$, depend on a choice of $\boldsymbol{\delta}$, $\boldsymbol{R}$, $(\boldsymbol{r}_j)$, $\boldsymbol{m}$, $\boldsymbol{w}$. The straightening Theorem \ref{MRT} is not constructive, and so it is not evident what the pointwise dependence of iterates of $f$ is on the parameters $\boldsymbol{\delta}$, $\boldsymbol{R}$, $(\boldsymbol{r}_j)$, $\boldsymbol{m}$, $\boldsymbol{w}$. 
 
 The solution to this, as mentioned in the Introduction, is to employ the fixpoint Theorem \ref{fixpoint}: we will study the dependence of $f$ at a given point on the parameters $\boldsymbol{\delta}$, $\boldsymbol{R}$, $(\boldsymbol{r}_j)$, $\boldsymbol{m}$, $\boldsymbol{w}$. A fixpoint of an aptly chosen map (see (\ref{fixpoint_map}) and the proof of Proposition \ref{fixpoint_application}) will yield the desired choice of parameters. There are important hypotheses we need to verify in order to employ Theorem \ref{fixpoint}. To this end, we need to verify a number of estimates which we will discuss below in Remark \ref{referee_suggestion}. 
 
 \end{rem}
 
 \begin{figure}
% Use the relevant command to insert your figure file.
% For example, with the graphicx package use
\centering
\scalebox{.4}{%% Creator: Inkscape 1.0.1 (c497b03c, 2020-09-10), www.inkscape.org
%% PDF/EPS/PS + LaTeX output extension by Johan Engelen, 2010
%% Accompanies image file '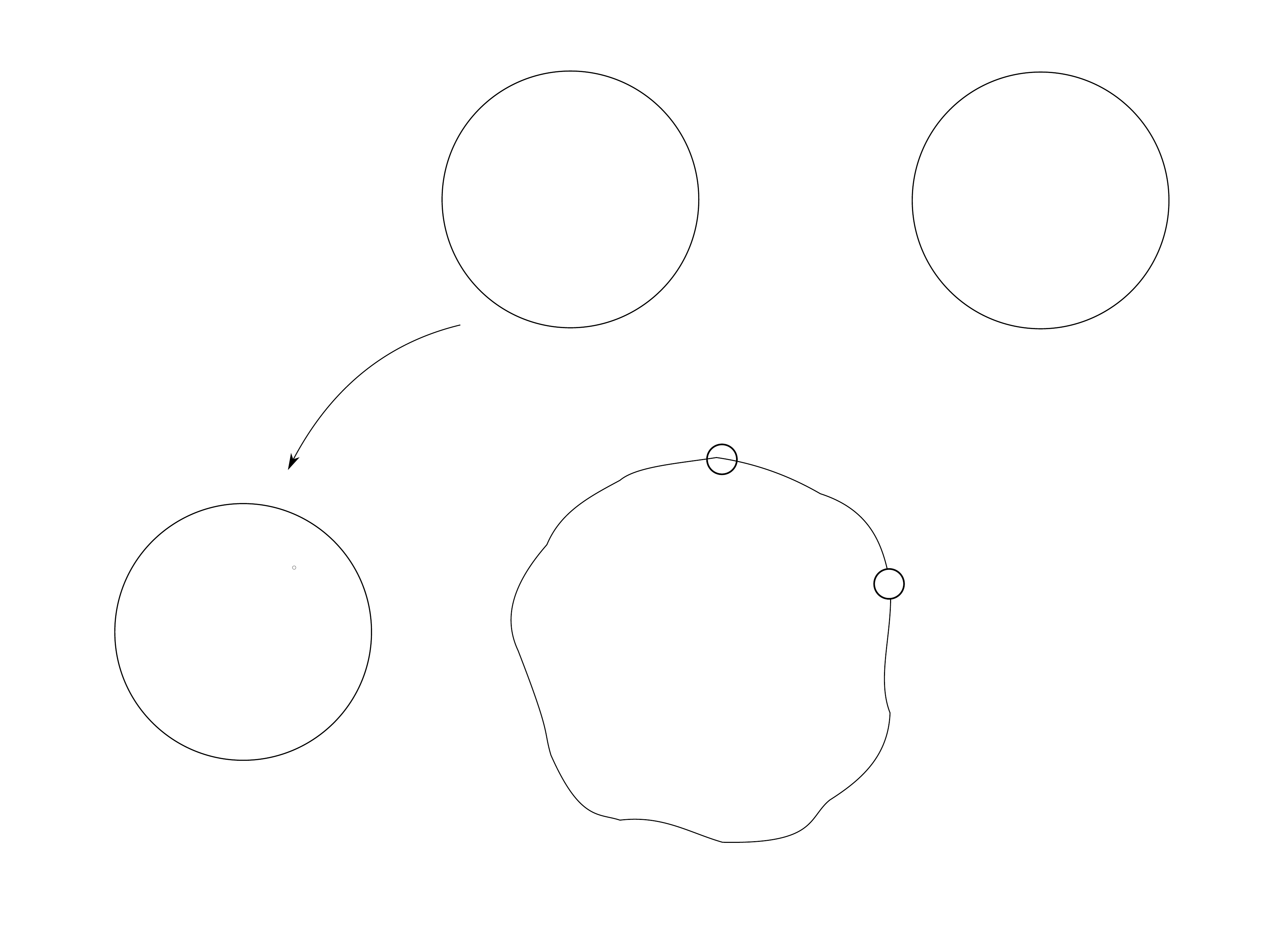' (pdf, eps, ps)
%%
%% To include the image in your LaTeX document, write
%%   \input{<filename>.pdf_tex}
%%  instead of
%%   \includegraphics{<filename>.pdf}
%% To scale the image, write
%%   \def\svgwidth{<desired width>}
%%   \input{<filename>.pdf_tex}
%%  instead of
%%   \includegraphics[width=<desired width>]{<filename>.pdf}
%%
%% Images with a different path to the parent latex file can
%% be accessed with the `import' package (which may need to be
%% installed) using
%%   \usepackage{import}
%% in the preamble, and then including the image with
%%   \import{<path to file>}{<filename>.pdf_tex}
%% Alternatively, one can specify
%%   \graphicspath{{<path to file>/}}
%% 
%% For more information, please see info/svg-inkscape on CTAN:
%%   http://tug.ctan.org/tex-archive/info/svg-inkscape
%%
\begingroup%
  \makeatletter%
  \providecommand\color[2][]{%
    \errmessage{(Inkscape) Color is used for the text in Inkscape, but the package 'color.sty' is not loaded}%
    \renewcommand\color[2][]{}%
  }%
  \providecommand\transparent[1]{%
    \errmessage{(Inkscape) Transparency is used (non-zero) for the text in Inkscape, but the package 'transparent.sty' is not loaded}%
    \renewcommand\transparent[1]{}%
  }%
  \providecommand\rotatebox[2]{#2}%
  \newcommand*\fsize{\dimexpr\f@size pt\relax}%
  \newcommand*\lineheight[1]{\fontsize{\fsize}{#1\fsize}\selectfont}%
  \ifx\svgwidth\undefined%
    \setlength{\unitlength}{1098.06126392bp}%
    \ifx\svgscale\undefined%
      \relax%
    \else%
      \setlength{\unitlength}{\unitlength * \real{\svgscale}}%
    \fi%
  \else%
    \setlength{\unitlength}{\svgwidth}%
  \fi%
  \global\let\svgwidth\undefined%
  \global\let\svgscale\undefined%
  \makeatother%
  \begin{picture}(1,0.72106678)%
    \lineheight{1}%
    \setlength\tabcolsep{0pt}%
    \put(0,0){\includegraphics[width=\unitlength,page=1]{general_strat_experiment.pdf}}%
    \put(0.09617136,0.32982987){\color[rgb]{0,0,0}\makebox(0,0)[lt]{\lineheight{0}\smash{\begin{tabular}[t]{l}\scalebox{3}{$\mathbb{D}$}\end{tabular}}}}%
    \put(0,0){\includegraphics[width=\unitlength,page=2]{general_strat_experiment.pdf}}%
    \put(0.31029491,0.65712436){\color[rgb]{0,0,0}\makebox(0,0)[lt]{\lineheight{0}\smash{\begin{tabular}[t]{l}\scalebox{3}{$D_{p_1}$}\end{tabular}}}}%
    \put(0.68488726,0.66376669){\color[rgb]{0,0,0}\makebox(0,0)[lt]{\lineheight{0}\smash{\begin{tabular}[t]{l}\scalebox{3}{$D_{p_{2}}$}\end{tabular}}}}%
    \put(0.23288167,0.42958768){\color[rgb]{0,0,0}\makebox(0,0)[lt]{\lineheight{0}\smash{\begin{tabular}[t]{l}\scalebox{3}{$g$}\end{tabular}}}}%
    \put(0.57313894,0.42058886){\color[rgb]{0,0,0}\makebox(0,0)[lt]{\lineheight{0}\smash{\begin{tabular}[t]{l}\scalebox{3}{$f^{-n_2}\circ\phi$}\end{tabular}}}}%
    \put(0,0){\includegraphics[width=\unitlength,page=3]{general_strat_experiment.pdf}}%
  \end{picture}%
\endgroup%
}
% figure caption is below the figure
\caption{ This Figure depicts the behavior for $f$ as in Theorem \ref{main_theorem}. The critical points of $g|_{D_{p_1}}$ are marked with stars, and the white circles on $\partial D_{p_2}$ represent points which map to $+1$ under $g$. Thus, the figure depicts the desired situation in which critical values of $f|_{\phi(D_{p_1})}$ map after $n_2+1$ iterates of $f$ to $+1$, with sufficient contraction of $f|_{D_{p_1}}$ so that $f^{n_2+1}\circ\phi( (1/2)D_{p_1} ) \subset \phi((2/3)D_{p_2} )$.       }
\label{fig:general_strat}       % Give a unique label
\end{figure}

\begin{rem}\label{referee_suggestion} In Proposition \ref{selection_of_r_m} below, we will fix a choice of $\boldsymbol{R}$, $\boldsymbol{m}$ so that for a large class of choices of $\boldsymbol{\delta}$, $(\boldsymbol{r}_j)$, $\boldsymbol{w}$, the resulting maps $g$ and $f:=g\circ\phi^{-1}$ satisfy certain estimates (\ref{eq0})-(\ref{eq4}) listed below. Before stating Proposition \ref{selection_of_r_m}, we will introduce each of these estimates with some brief motivation. Each of (\ref{eq0})-(\ref{eq4}) will be necessary either in verifying the hypotheses of the fixpoint Theorem \ref{fixpoint}, or in the construction of a wandering domain for $f$.

% $\boldsymbol{\delta}$, $(\boldsymbol{r}_j)$, $\boldsymbol{w}$

% In Proposition \ref{selection_of_r_m} below, we will establish that for a large class of choices of parameters, the associated maps $g$ and $f:=g\circ\phi^{-1}$ satisfy certain estimates (\ref{eq0})-(\ref{eq4}) listed below. Before the statement of Proposition \ref{selection_of_r_m}, however, we introduce each of the estimates (\ref{eq0})-(\ref{eq4})  with some motivation. 

First, so that the estimates of the previous Section \ref{model_map} may be applied to the correction map $\phi$, we will need to establish that $g$ satisfies: \begin{align} \textrm{supp}(g_{\overline{z}})\cap D_{n} \subset \{ z\in D_n: |z-z_n| > \boldsymbol{s_0}(n) \} \emph{ for all }n\in\mathbb{N}. \phantom{bdfadsf=\,} \label{eq0} \end{align}

%which we will need to establish in Proposition \ref{selection_of_r_m} below in order to apply the fixpoint Theorem \ref{fixpoint}.

% In order to apply the estimates of Section \ref{model_map}, we will need:
 
%In Proposition \ref{selection_of_r_m}, we will choose parameters $\boldsymbol{r}$, $\boldsymbol{m}$, and introduce constants $(C_k)_{k=1}^\infty$ and a subsequence $(n_k)_{k=1}^\infty$ which  
 
%\noindent  In order to lighten notation, it will be useful to employ the following convention:

%\begin{definition} Let $(\tilde{p}_n)_{n=1}^\infty$ be as in Definition \ref{sequence_along_which_we}. Given a sequence $(n_k)_{k=1}^\infty$ of natural numbers, we define the sequence $(p_k)_{k=1}^\infty$ by $p_k:=\tilde{p}_{n_k}$. 
%\end{definition}

%We are omitting the dependence of $(p_k)_{k=1}^\infty$ on $(n_k)_{k=1}^\infty$ in our notation

 In order to lighten notation, it will be useful to employ the following convention. Given a sequence $(n_k)_{k=1}^\infty$ of natural numbers, and the subsequence $(\tilde{p}_n)_{n=1}^\infty$ of Definition \ref{sequence_along_which_we}, we define the sequence $(p_k)_{k=1}^\infty$ by $p_k:=\tilde{p}_{n_k}$. We are omitting the dependence of $(p_k)_{k=1}^\infty$ on the sequence $(n_k)_{k=1}^\infty$ in our notation, but this will not cause confusion as we will only consider one sequence $(n_k)_{k=1}^\infty$ defined in Proposition \ref{selection_of_r_m}.
 
 In order for the map (\ref{fixpoint_map}) (see the proof of Proposition \ref{fixpoint_application}) to satisfy the hypotheses of the fixpoint Theorem \ref{fixpoint}, we will need to find a subsequence $(n_k)_{k=1}^\infty$ so that the following holds: \begin{align} \frac{f^{-n_k}\circ\phi(z_{p_k} + \xi) - f^{-n_k}\circ\phi(z_{p_k})}{\xi\cdot(f^{-n_k})'(g^{n_k}(1/2)) }  \in   \exp\left(\overline{D}\left(0, \log\hspace{-1.5mm}\sqrt[\leftroot{-1}\uproot{2}n_0]{\boldsymbol{R}(p_{k-1})}\right)\right) \textrm{ and }  \label{eq1}
\\ \frac{f^{-n_k}\circ\phi(z_{p_k}+\xi_{j+1})-f^{-n_k}\circ\phi(z_{p_k}+\xi_j)}{(f^{-n_k})'(g^{n_k}(1/2))(\xi_{j+1}-\xi_{j})} \in \exp\left(\overline{D}\left(0, \log\hspace{-1.5mm}\sqrt[\leftroot{-1}\uproot{2}n_0]{\boldsymbol{R}(p_{k-1})}\right)\right)\textrm{, }  \label{eq1'}
%\\  \forall\xi \in \partial\mathbb{D} \emph{, } k\geq2\emph{, and } 1\leq j \leq \boldsymbol{m}(p_{n_{k-1}})-1 \emph{, where } (\xi_{j})_{j=1}^{\boldsymbol{m}(p_{n_{k-1}})-1} \emph{ are the }  (\boldsymbol{m}(p_{n_{k-1}})-1)^{\emph{st}} \emph{ roots of  } -1. \nonumber
%\\ \phantom{asdf} \nonumber
\\   \forall\xi \in \partial\mathbb{D} \textrm{, } k\geq2\textrm{, } 1\leq j \leq \boldsymbol{m}(p_{k-1})-1 \textrm{, where } (\xi_{j})_{j=1}^{\boldsymbol{m}(p_{k-1})-1}:=(-1)^{\frac{1}{\boldsymbol{m}(p_{k-1})-1}}. \nonumber
\end{align}

\noindent Next, in order to be able to apply Proposition \ref{small_support}, we will need to find positive constants $(C_k)_{k=2}^\infty$ so that the following estimate holds: \begin{align} C_{{k+1}} \leq \left(\boldsymbol{m}(p_k)\right)^{\frac{1}{\boldsymbol{m}(p_k)}}\left( \frac{\boldsymbol{m}(p_k)}{\boldsymbol{m}(p_k)-1} \left(f^{-n_{k+1}}\right)'\left(g^{n_{k+1}}(1/2)\right) \right)^{\frac{\boldsymbol{m}(p_k)-1}{\boldsymbol{m}(p_k)}} \leq \frac{1}{16} \emph{ for } k\geq1. \label{eq2} 
\end{align}
 
\noindent In order to be able to apply the estimate of Lemma \ref{rho}, we will need that: \begin{align} f^{-n_k}\circ\phi(z_{p_k}) \in \overline{D}(0,3/4)\textrm{  for  } k\geq2. \label{eq3}
\end{align}

\noindent Lastly, we will need the following estimate on the contraction of $f|_{D_{p_k}}$ in order to construct a wandering Fatou component: \begin{align}  
%\\ C_{n_{k+1}} \leq \left[\left(\frac{ {m_{p_k}} }{ {m_{p_k}}-1 }\right)\cdot(f^{-n_{k+1}})'(g^{n_{k+1}}(1/2))\right]^{({m_{p_k}}-1)/{m_{p_k}}}{m_{p_k}}^{1/{m_{p_k}}} \leq \frac{1}{16} \emph{ for } k\geq1, \label{eq2}
\left(\frac{k}{k+1}\right)^{\boldsymbol{m}(p_k)} + \frac{k}{k+1} \left(\boldsymbol{m}(p_k)\right)^{\frac{1}{\boldsymbol{m}(p_k)}}\left( \frac{\boldsymbol{m}(p_k)}{\boldsymbol{m}(p_k)-1} \left(f^{-n_{k+1}}\right)'\left(g^{n_{k+1}}(1/2)\right) \right)^{\frac{\boldsymbol{m}(p_k)-1}{\boldsymbol{m}(p_k)}} \phantom{} \label{eq4}
\\ < \inf_{\xi\in\partial\mathbb{D}} \left| f^{-n_{k+1}}\circ\phi\left(\frac{(k+1)\xi}{k+2}+z_{p_{k+1}}\right) - f^{-n_{k+1}}\circ\phi\left(z_{p_{k+1}}\right) \right|\emph{  for  } k\geq1. \phantom{af} \nonumber
\end{align}
 
\noindent The left-hand side of (\ref{eq4}) roughly signifies the contraction of $f|_{D_{p_k}}$, and the right-hand side of (\ref{eq4}) is roughly the radius of a disc contained in $f^{-n_{k+1}}(D_{p_{k+1}})$. Thus, establishing (\ref{eq4}) will allow us to conclude there is sufficient contraction of $f|_{D_{p_k}}$ in order to construct a wandering domain for $f$:  see the proof of Theorem \ref{main_theorem} at the end of this Section for details.

%see the proof of Theorem \ref{main_theorem} at the end of this Section for details. Thus, establishing (\ref{eq4}) will allow us to conclude there is sufficient contraction of $f|_{D_{p_k}}$ in order to 

%\begin{rem} The purpose of the relations (\ref{eq1})-(\ref{eq3}) in Proposition \ref{selection_of_r_m}  below is to establish that the map (\ref{fixpoint_map}) in Proposition \ref{fixpoint_application} satisfies the hypotheses of the fixpoint Theorem \ref{fixpoint}. In particular, (\ref{eq1})-(\ref{eq3}) are needed to establish that there is a choice of $\boldsymbol{r}$, $\boldsymbol{m}$ such that the domain of (\ref{fixpoint_map}) is mapped to a subset of itself, essentially independently of a choice of permissible $\boldsymbol{\delta}$, $(\boldsymbol{r}_j)$, $\boldsymbol{w}$. A fixpoint of (\ref{fixpoint_map}) then corresponds to a choice of $\boldsymbol{\delta}$, $(\boldsymbol{r}_j)$, $\boldsymbol{w}$ such that the singular values of $f|_{D_{p_1}}$ escape to $\infty$ (see Figure \ref{fig:general_strat}). The relation (\ref{eq4}) will ensure sufficient contraction of $f|_{D_{\tilde{p}_k}}$ as needed in the proof of Theorem \ref{main_theorem} (again, see Figure \ref{fig:general_strat}), and (\ref{eq0}) will be necessary for applying the estimates of Section \ref{model_map} to the correction map $\phi$ (see Remark \ref{Permissibility_remark}).
%\end{rem}

%\noindent The purpose of the following Proposition is that, in fact, it is possible to choose parameters $\boldsymbol{n}$, $\boldsymbol{r}$, $\boldsymbol{m}$ 
\end{rem}

\noindent With the above discussion, we now state the following:

\begin{prop}\label{selection_of_r_m} There exists a subsequence $(n_k)_{k=1}^{\infty}$ of natural numbers, a choice of permissible parameters $\boldsymbol{R}$, $\boldsymbol{m}$, and positive constants $(C_{k})_{k=2}^{\infty}$ such that: for any choice of permissible $(\boldsymbol{r}_j)$, $\boldsymbol{w}$, $\boldsymbol{\delta}$ with $1/16\geq\boldsymbol{\delta}(p_k)\geq C_{{k+1}}$ for all $k\in\mathbb{N}$ and $\boldsymbol{w}(l)=\boldsymbol{\delta}(l)=0$ for $l\in\mathbb{N}\setminus(p_k)_{k=1}^{\infty}$, the relations (\ref{eq0})-(\ref{eq4}) hold.

%$1/16>\boldsymbol{\delta}(n_k)>C_k/2$ and any choice of permissible $(\boldsymbol{r}_j)_{j=1}^m$, $\boldsymbol{w}$; one has $\textrm{supp}(g_{\overline{z}})\cap D_{n} \subset \{ z\in D_n: |z-z_n| > \boldsymbol{s_0}(n) \}$ for all $n\in\mathbb{N}$, and

\end{prop}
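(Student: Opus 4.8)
The plan is to build the subsequence $(n_k)$, the parameters $\boldsymbol{R},\boldsymbol{m}$, and the constants $(C_k)$ by a simultaneous induction on $k$, exploiting the fact (recorded in Remark \ref{choice_of_lambda}) that $\lambda$, the map $g|_{S^+}$, the points $z_n$, and the sequence $(\tilde p_n)$ are all already fixed and independent of the remaining parameters, and the fact (Remark \ref{localized_dilatation}) that the extension of $g$ off of $\cup_n D_n$ does not depend on $\boldsymbol{\delta},\boldsymbol{R},(\boldsymbol{r}_j),\boldsymbol{w}$. The key structural observation is that the left-hand sides of the quantities to be controlled in (\ref{eq1})--(\ref{eq4}) involve $f^{-n_k}\circ\phi$ evaluated near $z_{p_k}=z_{\tilde p_{n_k}}$, which by Corollary \ref{dist_to_1/2} lies in $\overline D(1/2,1/8)$ for $n_k$ large, i.e.\ in the region where, by Proposition \ref{close_to_id}, $\phi$ is as close to the identity as we wish and the branch $f^{-n_k}$ is a well-behaved univalent map to which the Koebe-type estimates of Theorems \ref{thm:Koebe} and \ref{thm:Koebe_argument} apply on the disc $D(1/2,1/8)\subset D(1/2,3/4)$.

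The induction proceeds as follows. Having fixed $\boldsymbol{R}(p_1),\dots,\boldsymbol{R}(p_{k-1})$ and $\boldsymbol{m}(p_1),\dots,\boldsymbol{m}(p_{k-1})$ (and setting $\boldsymbol{R}(l)=1$, $\boldsymbol{m}(l)$ large on the complementary indices so that $\boldsymbol{\delta}(l)=\boldsymbol{w}(l)=0$ there collapses $\iota$ to $z\mapsto z^m$ and contributes no dilatation in $D_l$), I would first choose $n_k$ large: large enough that Corollary \ref{dist_to_1/2} applies (so (\ref{eq3}) holds, since $f^{-n_k}\circ\phi(z_{p_k})\in\overline D(1/2,1/8)\subset\overline D(0,3/4)$), large enough that the estimate (\ref{derivative_of_f2}) of Proposition \ref{prop_on_derivative_of_f} forces $|(f^{-n_k})'(g^{n_k}(1/2))|$ to be so small that the upper bound in (\ref{eq2}) and the relevant term in (\ref{eq4}) are $\le 1/16$ (here one uses that raising a tiny quantity to the power $\tfrac{\boldsymbol{m}(p_k)-1}{\boldsymbol{m}(p_k)}\approx 1$ stays tiny once $\boldsymbol{m}(p_k)$ is chosen), and large enough that the Koebe argument estimates give (\ref{eq1}) and (\ref{eq1'}): indeed, applying Theorem \ref{thm:Koebe_argument} and Theorem \ref{thm:Koebe} to the univalent branch $f^{-n_k}$ on $D(1/2,3/4)$ together with $|\phi(z)-z|<1/32$ on all of $\mathbb{C}$ and $|\phi(z)-z|<C/|z|$ for $|z|>1$, the ratios in (\ref{eq1}), (\ref{eq1'}) lie in any prescribed neighborhood of $1$ once the points $\phi(z_{p_k}+\xi)$ are close enough to $\phi(z_{p_k})$ relative to the inner radius of the domain of the branch — which is arranged by a suitable lower bound on $n_k$ combined with control of the derivative of $f^{-n_k}$ via Theorem \ref{thm:Koebe}(b). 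Having chosen $n_k$, I then define $C_{k+1}$ to be (a hair above) the left-hand quantity in (\ref{eq2}) with $k\!+\!1$ in place of $k$ once $\boldsymbol{m}(p_{k+1})$ will be chosen — more precisely, one chooses $\boldsymbol{m}(p_k)$ first so that the displayed power expression in (\ref{eq2}),(\ref{eq4}) is pinned down, then sets $C_{k+1}$ equal to it (so the $\ge C_{k+1}$ side of (\ref{eq2}) is automatic and the $\le 1/16$ side holds by the smallness of the derivative), and finally verifies (\ref{eq4}) by noting its left-hand side is $(\tfrac{k}{k+1})^{\boldsymbol{m}(p_k)}+\tfrac{k}{k+1}\cdot(\text{that small power expression})$, which tends to $0$ as $n_{k+1}\to\infty$, while its right-hand side is, by Theorem \ref{thm:Koebe}(a) applied to $f^{-n_{k+1}}$ on $D(1/2,3/4)$ (or rather to $f^{-n_{k+1}}\circ\phi$, using (\ref{qc_map_estimate2})), comparable to $\tfrac{k+1}{k+2}\cdot|(f^{-n_{k+1}})'(g^{n_{k+1}}(1/2))|$ up to the Koebe constants for the disc $D(z_{p_{k+1}},\tfrac{k+1}{k+2})\subset D(z_{p_{k+1}},1)$; choosing $\boldsymbol{R}(p_k)<3/2$ close to $1$ and $\boldsymbol{m}(p_k)$ appropriately large then separates the two sides. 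Finally (\ref{eq0}) follows from Proposition \ref{small_support} applied in $D_{p_k}$ with $s=\boldsymbol{s}_0(p_k)$, using $\boldsymbol{\delta}(p_k)\ge C_{k+1}=\tilde\delta$ and $\boldsymbol{R}(p_k)<\tilde R<3/2$, provided $\boldsymbol{m}(p_k)$ exceeds the threshold $m_0''$ of that Proposition — which is an additional lower bound on $\boldsymbol{m}(p_k)$ to be folded into the induction.

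The bookkeeping must be ordered carefully because the constants are interlocked: $C_{k+1}$ depends on $\boldsymbol{m}(p_k)$ and on $(f^{-n_{k+1}})'(g^{n_{k+1}}(1/2))$, hence on $n_{k+1}$, while the permissibility constraint in the hypothesis of the Proposition requires $\boldsymbol{\delta}(p_k)\in[C_{k+1},1/16]$, so one needs $C_{k+1}\le 1/16$ — this is exactly what the $\le 1/16$ half of (\ref{eq2}) delivers, provided $n_{k+1}$ is taken large enough after $\boldsymbol{m}(p_k)$ is fixed. So at stage $k$ the order is: (i) pick $\boldsymbol{R}(p_k)\in[1,3/2)$ and $\boldsymbol{m}(p_k)$ large (beyond $m_0'''$, beyond $m_0''$ of Proposition \ref{small_support}, and large enough to pin the power expressions near $1$ and to make (\ref{eq4}) separable); (ii) pick $n_{k+1}$ large (Corollary \ref{dist_to_1/2}, smallness of the derivative via Proposition \ref{prop_on_derivative_of_f}, Koebe argument bounds for (\ref{eq1}),(\ref{eq1'}) with $\boldsymbol{R}(p_k)$ now fixed, and (\ref{eq4})); (iii) set $C_{k+1}$ equal to the displayed power expression in (\ref{eq2}); then all of (\ref{eq0})--(\ref{eq4}) at index $k$ hold for every permissible $(\boldsymbol{r}_j),\boldsymbol{w},\boldsymbol{\delta}$ obeying the stated constraints, since the only dependence of the left-hand sides on those free parameters enters through $\phi$, and by Remarks \ref{localized_dilatation}, \ref{Permissibility_remark} and Proposition \ref{close_to_id} that dependence is uniformly controlled (the bound $|\phi(z)-z|<1/32$ and $|\phi(z)-z|<1/|z|$ holds for \emph{every} permissible choice). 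The main obstacle is precisely this uniformity: one must verify that all the Koebe-type estimates in (\ref{eq1})--(\ref{eq4}) can be made to hold simultaneously for the \emph{entire} permissible parameter range rather than for one fixed choice — which is why the argument is organized so that $\boldsymbol{R},\boldsymbol{m},(n_k),(C_k)$ are chosen using only $\lambda$-dependent data and the uniform estimates of Section \ref{model_map}, never using the eventual values of $\boldsymbol{\delta},(\boldsymbol{r}_j),\boldsymbol{w}$, which are left free for the fixpoint argument of Proposition \ref{penultimate}.
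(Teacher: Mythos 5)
Your proposal follows the paper's general skeleton (inductive choice of $\boldsymbol{R}$, $\boldsymbol{m}$, $(n_k)$, $(C_k)$ using only $\lambda$-dependent data, Corollary \ref{dist_to_1/2} for (\ref{eq3}), Proposition \ref{prop_on_derivative_of_f} for (\ref{eq2}), Proposition \ref{small_support} for (\ref{eq0}), and Koebe distortion of the branch $f^{-n_k}$ combined with (\ref{qc_map_estimate}), (\ref{qc_map_estimate2}) for the ratio estimates), but two steps do not go through as you describe. The first gap is (\ref{eq1'}): it is not ``the same Koebe argument'' as (\ref{eq1}). After factoring as in (\ref{convexity_estimate}), the Koebe estimates for $f^{-n_k}$ control only the first and third factors; the middle factor is the difference quotient (\ref{troublesome_term}) of $\phi$ over the two points $z_{p_k}+\xi_j$, $z_{p_k}+\xi_{j+1}$, whose separation is of order $1/\boldsymbol{m}(p_{k-1})$. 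At that tiny scale the bounds $|\phi(z)-z|<1/32$ and $|\phi(z)-z|<1/|z|$ give no control: displacements of size $\approx 1/|z_{p_k}|$ applied to points a distance $\approx 2\pi/\boldsymbol{m}(p_{k-1})$ apart can distort the quotient arbitrarily, and you cannot simply arrange $|z_{p_k}|\gg\boldsymbol{m}(p_{k-1})$, because (as explained below) $\boldsymbol{m}(p_{k-1})$ has to be chosen \emph{after} $n_k$. This is precisely the ``troublesome term'' the paper spends the estimates (\ref{first_delta})--(\ref{last_Kobe_estimate}) on: it is estimated only at the next stage, by making the dilatation of $\phi$ on $\overline{D}(z_{p_k},(1+\pi/2)/2)$ negligible through the choice of $\boldsymbol{m}(p_k)$, invoking Theorem \ref{Lehto-Virtanen} and a normal family argument to treat $\phi$ as conformal there, and then applying Theorem \ref{thm:Koebe}(a) to $\phi$ itself, normalized by the difference quotient at the fixed scale $\zeta$ via (\ref{kobe_estimate_2}). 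Your write-up does not register this difficulty at all.

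The second gap is your order of choices at stage $k$ (first $\boldsymbol{m}(p_k)$, then $n_{k+1}$, then $C_{k+1}$), which breaks (\ref{eq4}) and (\ref{eq0}). The left-hand side of (\ref{eq4}) does \emph{not} tend to $0$ as $n_{k+1}\to\infty$ with $\boldsymbol{m}(p_k)$ fixed: its first summand $(k/(k+1))^{\boldsymbol{m}(p_k)}$ is a fixed positive number, while the right-hand side is comparable to $\frac{k+1}{k+2}\left|(f^{-n_{k+1}})'(g^{n_{k+1}}(1/2))\right|$, which tends to $0$ as $n_{k+1}\to\infty$ by (\ref{derivative_of_f2}) and Remark \ref{choice_of_lambda}; so for all sufficiently large $n_{k+1}$ the inequality fails. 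Likewise, raising a tiny $x$ to the power $\frac{m-1}{m}$ with $m$ fixed in advance need not keep it comparable to $x$ (one needs $m\gtrsim\log(1/x)$), which is needed to beat the right-hand side. Both facts force $\boldsymbol{m}(p_k)$ to be chosen after $n_{k+1}$ and $C_{k+1}$, which is the paper's order: $n_{k+1}$ is fixed first together with the auxiliary bound (\ref{wandering_estimate2}); $C_{k+1}$ is taken to be the parameter-independent lower bound for $\left|(f^{-n_{k+1}})'(g^{n_{k+1}}(1/2))\right|$ coming from (\ref{derivative_of_f}) (your $C_{k+1}$, defined as the displayed power expression, is not a constant — it depends on $\phi$ and hence on the free parameters $\boldsymbol{\delta}$, $(\boldsymbol{r}_j)$, $\boldsymbol{w}$); and only then is $\boldsymbol{m}(p_k)$ taken large, using Proposition \ref{small_support} with $\tilde\delta=C_{k+1}$ (unavailable in your ordering, so your appeal to its threshold $m_0''$ is circular) and using that the left side of (\ref{eq4}) tends to $\frac{k}{k+1}\left|(f^{-n_{k+1}})'\right|$ as $\boldsymbol{m}(p_k)\to\infty$ uniformly over the admissible derivative values. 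Note that this forced ordering is exactly what creates the difficulty with (\ref{eq1'}) above, so the two gaps cannot be repaired independently; the paper's extra machinery exists to resolve that tension.
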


\begin{proof} In order to define the sequences $(n_k)_{k=1}^\infty$, $(\boldsymbol{R}(p_k))_{k=1}^{\infty}$, $(\boldsymbol{m}(p_k))_{k=1}^{\infty}$, $(C_k)_{k=2}^{\infty}$, our logic will be as follows. We start by defining $n_1:=1$ so that $p_1:=\tilde{p}_{n_1}=\tilde{p}_1$. We first choose $\boldsymbol{R}(p_1)$, $n_2$, $C_{2}$, $\boldsymbol{m}(p_1)$ (in that order) so that (\ref{eq1}), (\ref{eq3}) hold with $k=2$ and (\ref{eq2}), (\ref{eq4}) hold with $k=1$ if $\boldsymbol{\delta}$, $\boldsymbol{R}$, $(\boldsymbol{r}_j)$, $\boldsymbol{m}$, $\boldsymbol{w}$ are any permissible extension of the choices $\boldsymbol{R}(p_1)$, $\boldsymbol{m}(p_1)$  and $1/16\geq\boldsymbol{\delta}(p_1)\geq C_{2}$, \emph{under the extra assumption} that (\ref{eq0}) holds. For each $l>1$, we then recursively choose $\boldsymbol{R}(p_l)$, $n_{l+1}$, $C_{{l+1}}$, $\boldsymbol{m}(p_l)$ (in that order) based on our previous choices $\boldsymbol{R}(p_k)$, $n_{k+1}$, $C_{{k+1}}$, $\boldsymbol{m}(p_k)$ for $1\leq k<l$, so that (\ref{eq1}), (\ref{eq3}) hold with $k=l+1$ and (\ref{eq1'}), (\ref{eq2}), (\ref{eq4}) hold with $k=l$ if $\boldsymbol{\delta}$, $\boldsymbol{R}$, $(\boldsymbol{r}_j)$, $\boldsymbol{m}$, $\boldsymbol{w}$ are any permissible extension of the choices $(\boldsymbol{R}(p_k))_{k=1}^l$, $(\boldsymbol{m}(p_k))_{k=1}^l$ and $1/16\geq\boldsymbol{\delta}(p_k)\geq C_{{k+1}}$ for $1\leq k \leq l$, \emph{under the extra assumption} that (\ref{eq0}) holds. This inductively defines the sequences $(n_k)_{k=1}^\infty$, $(\boldsymbol{R}(p_k))_{k=1}^{\infty}$, $(\boldsymbol{m}(p_k))_{k=1}^{\infty}$, $(C_k)_{k=2}^{\infty}$, whence we will be able to observe that this definition is such that (\ref{eq0}) indeed holds for any permissible $(\boldsymbol{r}_j)$, $\boldsymbol{w}$, $\boldsymbol{\delta}$ with $1/16\geq\boldsymbol{\delta}(p_k)\geq C_{{k+1}}$ for all $k\in\mathbb{N}$, and $\boldsymbol{m}(l)=\boldsymbol{m}_0(l)$,  $\boldsymbol{R}(l)=1$ and $\boldsymbol{w}(l)=\boldsymbol{\delta}(l)=0$ for $l\in\mathbb{N}\setminus(p_k)_{k=1}^{\infty}$.

%we will choose (inductively in $l$ starting with $l=1$) $n_l$, $\boldsymbol{r}(n_l)$, $\boldsymbol{m}(n_l)$, $C_{n_l}$ so that (\ref{eq1})-(\ref{eq4}) hold with $k=l$ if $1/16>\boldsymbol{\delta}(n_l)>C_{n_l}/2$, \underline{under the extra assumption} that $\boldsymbol{\delta}$, $\boldsymbol{r}$, $(\boldsymbol{r}_j)$, $\boldsymbol{m}$, $\boldsymbol{w}$ are such that $\textrm{supp}(g_{\overline{z}})\cap D_{n} \subset \{ z\in D_n: |z-z_n| > \boldsymbol{s_0}(n) \}$ for all $n\in\mathbb{N}$. We will then observe that our choice of $(n_k)_{k=1}^\infty$, $\boldsymbol{r}$, $\boldsymbol{m}$, $(C_k)_{k=1}^{\infty}$ was such that this extra assumption indeed holds for any permissible $(\boldsymbol{r}_j)$, $\boldsymbol{w}$, $\boldsymbol{\delta}$ with $1/16>\boldsymbol{\delta}(n_k)>C_k/2$ for all $k\in\mathbb{N}$ and $\boldsymbol{w}(l)=\boldsymbol{\delta}(l)=0$ for $l\in\mathbb{N}\setminus(n_k)_{k=1}^{\infty}$.

%We will show that with the chosen $(n_k)_{k=1}^\infty$, $\boldsymbol{r}$, $\boldsymbol{m}$, $(C_k)_{k=1}^{\infty}$ 

% we will first work under the assumption that the parameters $\boldsymbol{\delta}$, $\boldsymbol{r}$, $(\boldsymbol{r}_j)_{j=1}^m$, $\boldsymbol{m}$, $\boldsymbol{w}$ are such that the correction map $\phi$ satisfies (\ref{qc_map_estimate}) with $C=R=1$. With this extra assumption, we will choose $(n_k)_{k=1}^\infty$, $\boldsymbol{r}$, $\boldsymbol{m}$, $C_k>0$ and prove (\ref{eq1})-(\ref{eq4}). Lastly we will observe that with 

As already mentioned, we define $n_1:=1$, so that $p_1=\tilde{p}_1$. (see Definition \ref{sequence_along_which_we}). Consider $\boldsymbol{s_0}(p_1)$ where $\boldsymbol{s_0}$ is as in Definition \ref{s_definition}. Define $\boldsymbol{R}(p_1)$ permissible so that $1<\boldsymbol{R}(p_1)<1/\boldsymbol{s_0}(p_1)$. Let $\xi\in\partial\mathbb{D}$ and let $\boldsymbol{\delta}$, $\boldsymbol{R}$, $(\boldsymbol{r}_j)$, $\boldsymbol{m}$, $\boldsymbol{w}$ be any permissible extension of $\boldsymbol{R}(p_1)$ such that (\ref{eq0}) holds. So as to ensure  (\ref{eq1}) for $k=2$, we consider

\begin{align}\label{distortion_quantity} \frac{f^{-n}(\phi(z_{\tilde{p}_n} + \xi)) - f^{-n}(\phi(z_{\tilde{p}_n}))}{(\xi)\cdot(f^{-n})'(g^{n}(1/2)) } = \left( \frac{f^{-n}(\phi(z_{\tilde{p}_n} + \xi)) - f^{-n}(\phi(z_{\tilde{p}_n}))}{(f^{-n})'(\phi(z_{\tilde{p}_n})) ( \phi(z_{\tilde{p}_n} + \xi) - \phi(z_{\tilde{p}_n}) )} \right) \cdot
\\  \cdot \left( \frac{\phi(z_{\tilde{p}_n} + \xi) - \phi(z_{\tilde{p}_n})}{\xi}  \right) \cdot \left( \frac{(f^{-n})'(\phi(z_{\tilde{p}_n}))}{(f^{-n})'(g^{n}(1/2))} \right). \nonumber 
\end{align}

\noindent The three terms on the right-hand side of (\ref{distortion_quantity}) tend to $1$ as $n\rightarrow\infty$ (apply Theorem \ref{thm:Koebe}(a) and Theorem \ref{thm:Koebe_argument}(a) to the first term,  apply (\ref{qc_map_estimate}) to the second term, and apply Theorem \ref{thm:Koebe}(b) and Theorem \ref{thm:Koebe_argument}(b) to the third term) uniformly over $\xi\in\partial\mathbb{D}$. This means we can find $n_2\in\mathbb{N}$ such that  (\ref{distortion_quantity}) with $n\geq n_2$ is contained in $\exp(\overline{D}(0,\log\hspace{-1.5mm}\sqrt[\leftroot{-1}\uproot{2}n_0]{R_{p_1}}))$. By Corollary \ref{dist_to_1/2}, we can further impose the condition that $n_2\in\mathbb{N}$ is chosen sufficiently large so that (\ref{eq3}) holds for $k=2$. In order to later prove (\ref{eq4}) when $k=1$, we need another condition on $n_2$, for which we consider the following expression: 

\begin{align}\label{wandering_estimate} \left| f^{-n}\left(\phi\left(\frac{2}{3}\xi+z_{\tilde{p}_{n}}\right)\right) - f^{-n}\big(\phi(z_{\tilde{p}_{n}})\big) \right| =  \left| \frac{f^{-n}\left(\phi\left(\frac{2}{3}\xi+z_{\tilde{p}_{n}}\right)\right) - f^{-n}\big(\phi(z_{\tilde{p}_{n}})\big)}{(f^{-n})'(\phi(z_{\tilde{p}_n}))\left(\phi\left(\frac{2}{3}\xi+z_{\tilde{p}_{n}}\right) - \phi(z_{\tilde{p}_{n}})\right)} \right| \cdot 
\\ \cdot \left| (f^{-n})'(g^n(1/2)) \right| \cdot \left|\frac{(f^{-n})'(\phi(z_{\tilde{p}_n}))}{(f^{-n})'(g^n(1/2))}\right| \cdot \left| \frac{ \phi\left(\frac{2}{3}\xi+z_{\tilde{p}_{n}}\right) - \phi(z_{\tilde{p}_{n}})}{\frac{2}{3}\xi+z_{\tilde{p}_{n}} - z_{\tilde{p}_{n}}} \right| \cdot \frac{2}{3}. \nonumber
\\ \nonumber
\end{align}

\noindent The first, third, and fourth terms of the right-hand side of (\ref{wandering_estimate}) tend to 1 as $n\rightarrow\infty$ (apply Theorem \ref{thm:Koebe}(a) to the first term, Theorem \ref{thm:Koebe}(b) to the third term, and (\ref{qc_map_estimate}) to the fourth term). Thus we can ensure that $n_2$ is sufficiently large so that 

\begin{align} \label{wandering_estimate2} \frac{2}{3}\left| \frac{f^{-n}\left(\phi\left(\frac{2}{3}\xi+z_{\tilde{p}_{n}}\right)\right) - f^{-n}\big(\phi(z_{\tilde{p}_{n}})\big)}{(f^{-n})'(\phi(z_{\tilde{p}_n}))\left(\phi\left(\frac{2}{3}\xi+z_{\tilde{p}_{n}}\right) - \phi(z_{\tilde{p}_{n}})\right)} \right| \cdot \left|\frac{(f^{-n})'(\phi(z_{\tilde{p}_n}))}{(f^{-n})'(g^n(1/2))}\right| \cdot
 \\ \cdot \left| \frac{ \phi\left(\frac{2}{3}\xi+z_{\tilde{p}_{n}}\right) - \phi(z_{\tilde{p}_{n}})}{\frac{2}{3}\xi+z_{\tilde{p}_{n}} - z_{\tilde{p}_{n}}} \right| > \frac{\frac{2}{3}+\frac{1}{2}}{2} \nonumber
\end{align}

\noindent with $n\geq n_2$. We need one last condition on $n_2$ for the purpose of later being able to prove (\ref{eq1'}) when $k=2$. Consider:

%\begin{align}\label{convexity_estimate} \frac{f^{-n}(\phi(z_{\tilde{p}_{n}}+\xi_{j+1}))-f^{-n}(\phi(z_{\tilde{p}_{n}}+\xi_j))}{(f^{-n})'(g^{n}(1/2))(\xi_{j+1}-\xi_{j})} = \frac{f^{-n}(\phi(z_{\tilde{p}_{n}}+\xi_{j+1}))-f^{-n}(\phi(z_{\tilde{p}_{n}}+\xi_j))}{(f^{-n})'(\phi(z_{\tilde{p}_n}+\xi_j))( \phi(z_{\tilde{p}_{n}}+\xi_{j+1}) - \phi(z_{\tilde{p}_{n}}+\xi_j) )}\cdot
%\nonumber \\  \frac{ \phi(z_{\tilde{p}_{n}}+\xi_{j+1}) - \phi(z_{\tilde{p}_{n}}+\xi_j)}{\xi_{j+1}-\xi_j}\cdot \frac{(f^{-n})'(\phi(z_{\tilde{p}_n}+\xi_j))}{(f^{-n})'(g^{n}(1/2))}, \phantom{asdfs}
%\end{align}

\begin{align}\label{convexity_estimate} \frac{f^{-n}(\phi(z_{\tilde{p}_{n}}+\xi_{j+1}))-f^{-n}(\phi(z_{\tilde{p}_{n}}+\xi_j))}{(f^{-n})'(g^{n}(1/2))(\xi_{j+1}-\xi_{j})} =  \\ \frac{f^{-n}(\phi(z_{\tilde{p}_{n}}+\xi_{j+1}))-f^{-n}(\phi(z_{\tilde{p}_{n}}+\xi_j))}{(f^{-n})'(\phi(z_{\tilde{p}_n}+\xi_j))\cdot( \phi(z_{\tilde{p}_{n}}+\xi_{j+1}) - \phi(z_{\tilde{p}_{n}}+\xi_j) )}\cdot
\nonumber  \frac{ \phi(z_{\tilde{p}_{n}}+\xi_{j+1}) - \phi(z_{\tilde{p}_{n}}+\xi_j)}{\xi_{j+1}-\xi_j} \cdot \\ \cdot \frac{(f^{-n})'(\phi(z_{\tilde{p}_n}+\xi_j))}{(f^{-n})'(g^{n}(1/2))}, \nonumber \phantom{asdfs}
\end{align}

%{\normalsize  
%  \setlength{\abovedisplayskip}{6pt}
%  \setlength{\belowdisplayskip}{\abovedisplayskip}
%  \setlength{\abovedisplayshortskip}{0pt}
%  \setlength{\belowdisplayshortskip}{3pt}
%\begin{align}\label{convexity_estimate} \frac{f^{-n}(\phi(z_{\tilde{p}_{n}}+\xi_{j+1}))-f^{-n}(\phi(z_{\tilde{p}_{n}}+\xi_j))}{(f^{-n})'(g^{n}(1/2))(\xi_{j+1}-\xi_{j})} = \frac{f^{-n}(\phi(z_{\tilde{p}_{n}}+\xi_{j+1}))-f^{-n}(\phi(z_{\tilde{p}_{n}}+\xi_j))}{(f^{-n})'(\phi(z_{\tilde{p}_n}+\xi_j))\cdot( \phi(z_{\tilde{p}_{n}}+\xi_{j+1}) - \phi(z_{\tilde{p}_{n}}+\xi_j) )}\cdot
%\nonumber \\  \frac{ \phi(z_{\tilde{p}_{n}}+\xi_{j+1}) - \phi(z_{\tilde{p}_{n}}+\xi_j)}{\xi_{j+1}-\xi_j}\cdot \frac{(f^{-n})'(\phi(z_{\tilde{p}_n}+\xi_j))}{(f^{-n})'(g^{n}(1/2))}, 
%\end{align}
%}%

\noindent where $(\xi_{j})_{j=1}^{m}$ are the $m^{\textrm{th}}$ roots of $-1$ for some $m\in\mathbb{N}$. Again, by Theorems \ref{thm:Koebe} and \ref{thm:Koebe_argument}, the first and third terms on the right-hand side of (\ref{convexity_estimate}) tend to $1$ as $n\rightarrow\infty$, independently of $m$. Thus we can ensure that $n_2$ is sufficiently large so that the product of the first and third terms in (\ref{convexity_estimate}) with $n\geq n_2$ is contained in $\exp(\overline{D}(0,(\log\hspace{-1.5mm}\sqrt[\leftroot{-1}\uproot{2}n_0]{R_{p_1}})/2))$. We would like to estimate the remaining term

\begin{equation}\label{troublesome_term}  \frac{ \phi(z_{\tilde{p}_{n}}+\xi_{j+1}) - \phi(z_{\tilde{p}_{n}}+\xi_j)}{\xi_{j+1}-\xi_j} \end{equation}

\noindent appearing on the right-hand side of (\ref{convexity_estimate}), so as to ensure (\ref{convexity_estimate}) is contained in 

\noindent $\exp(\overline{D}(0,\log\hspace{-1.5mm}\sqrt[\leftroot{-1}\uproot{2}n_0]{R_{p_1}}))$, but we will need to postpone this estimate until later in the proof, when $\boldsymbol{m}(p_{1})$ will already be fixed and we vary the parameter $\boldsymbol{m}(p_2)$. For now, we let $\eta>0$ be such that if 

\begin{align} \label{first_delta} \left|\arg \left( \frac{ \phi(z_{\tilde{p}_{n}}+\xi_{j+1}) - \phi(z_{\tilde{p}_{n}}+\xi_j)}{\xi_{j+1}-\xi_j} \right)\right| <\eta \textrm{, and } 
\\ \label{second_delta} 1-\eta< \left| \frac{ \phi(z_{\tilde{p}_{n}}+\xi_{j+1}) - \phi(z_{\tilde{p}_{n}}+\xi_j)}{\xi_{j+1}-\xi_j} \right| < 1+\eta \textrm{,}   \end{align} 

\noindent then (\ref{convexity_estimate}) is contained in $\exp(\overline{D}(0,\log\hspace{-1.5mm}\sqrt[\leftroot{-1}\uproot{2}n_0]{R_{p_1}}))$. Fix $\zeta\in\mathbb{C}\setminus\{0\}$ of sufficiently small modulus so that 

\begin{equation}\label{kobe_estimate_1} \hspace{0mm}\sqrt[\leftroot{-1}\uproot{2}1/4]{1-\eta} <  \frac{((1+\pi/2)/2 - |\zeta| )^2}{((1+\pi/2)/2)^2} \textrm{ and } \frac{((1+\pi/2)/2 + |\zeta| )^2}{((1+\pi/2)/2)^2} < \hspace{-1mm}\sqrt[\leftroot{-1}\uproot{2}1/4]{1+\eta}. %\phantom{Asdfasdf}
\end{equation}

\noindent  Ensure, using (\ref{qc_map_estimate}), that $n_2$ is sufficiently large such that 

\begin{equation}\label{kobe_estimate_2} \hspace{-1mm}\sqrt[\leftroot{-1}\uproot{2}1/4]{1-\eta} <\left|\frac{\phi(\xi+\zeta)-\phi(\xi)}{(\xi+\zeta)-\xi}\right| < \hspace{-1mm}\sqrt[\leftroot{-1}\uproot{2}1/4]{1+\eta} \textrm{ for }  \xi \in \partial D_{\tilde{p}_{n_2}}. 
\end{equation}

\noindent The inequalities (\ref{kobe_estimate_1}) and (\ref{kobe_estimate_2}) will later be used in conjunction with Theorems \ref{thm:Koebe},  \ref{thm:Koebe_argument} to estimate (\ref{troublesome_term}). This concludes our definition of $n_2$, and hence the definition of $p_2:=\tilde{p}_{n_2}$.

%We postpone estimating the second term in (\ref{convexity_estimate}) until later in the proof, when $\boldsymbol{m}(p_{1})$ will already be fixed, and we vary the parameter $\boldsymbol{m}(p_2)$ (controlling the dilatation of $\phi$ in a neighborhood of $\partial D_{p_2}$). %For now, we ensure that $n_2$ is sufficiently large so that the product of the first and third terms in (\ref{convexity_estimate}) with $n\geq n_2$ is contained in $\exp(\overline{D}(0,\varepsilon/2))$ with $\varepsilon=\log(\hspace{-1mm}\sqrt[\leftroot{-1}\uproot{2}n_0]{\boldsymbol{r}(p_1)})$ (independently of $m$). %Let $c>0$ be such that if $| (\phi(z_{p_{n}}+\xi_{j+1}) - \phi(z_{p_{n}}+\xi_j))/(\xi_{j+1}-\xi_j)- 1 | < c$, then (\ref{convexity_estimate}) with $n=n_2$ is contained in $\exp(\overline{D}(0,\varepsilon))$. Lastly we ensure that $n_2$ is sufficiently large such that any map $\psi$ which is conformal in $\overline{D}(z_{p_2}, 3/2)$ satisfying (\ref{qc_map_estimate}) with $C=R=1$ has the property that $|\psi'(z_{p_2}+\xi) - 1| \leq ?$ for $\xi \in \partial\mathbb{D}$ (see Theorem \ref{thm:Koebe}(a) and Theorem \ref{thm:Koebe_argument}(a)).

%  satisfies $|\psi'(z_{p_2}+\xi) - 1| \leq ?$ for $\xi \in \partial\mathbb{D}$ (see Theorem \ref{thm:Koebe}(a) and Theorem \ref{thm:Koebe_argument}(a)). 

%and $|\phi(z)-z|<\varepsilon$ for $z\in \overline{D}(z_{p_2},10)$ (using (\ref{qc_map_estimate})).

Having fixed $n_2$, Proposition \ref{prop_on_derivative_of_f} gives a lower bound $C_{2} \leq |(f^{-n_2})'(g^{n_2}(1/2))|$ under the extra assumption of (\ref{eq0}), and this lower bound is independent of permissible $\boldsymbol{\delta}$, $\boldsymbol{R}$, $(\boldsymbol{r}_j)$, $\boldsymbol{m}$, $\boldsymbol{w}$. We now proceed to choose $\boldsymbol{m}(p_1)$. By Proposition \ref{small_support}, we can choose $\boldsymbol{m}(p_1)>\boldsymbol{m}_0(p_1)$ to be sufficiently large so that $\textrm{supp}(g_{\overline{z}})\cap D_{p_1} \subset \{ z\in D_{p_1}: |z-z_{p_1}| > \boldsymbol{s_0}(p_1) \}$ for $1/16\geq\boldsymbol{\delta}(p_1)\geq C_{2}$ and any permissible $(\boldsymbol{r}_j(p_1))_{j=1}^{m_{p_1}-1}$, $\boldsymbol{w}(p_1)$. Ensure furthermore that $m_{p_1}=\boldsymbol{m}(p_1)>2$ so that the lower bound in (\ref{eq2}) follows for $k=1$. The upper bound in (\ref{eq2}) is deduced from the upper bound in Proposition \ref{prop_on_derivative_of_f}. We impose another condition on our selection of $m_{p_1}$ for the purpose of proving (\ref{eq4}) for $k=1$. Note that the left-hand side of (\ref{eq4}) for $k=1$ tends to $(1/2)\left| (f^{-n_2})'(g^{n_2}(1/2)) \right|$ as $m_{p_1}\rightarrow\infty$ (the term $(f^{-n_2})'(g^{n_2}(1/2))$ depends on $m_{p_1}$, however the convergence $\left(1/2\right)^{m_{p_1}} + (m_{p_1})^{1/m_{p_1}}\left( (m_{p_1})x/(m_{p_1}-1) \right)^{(m_{p_1}-1)/m_{p_1}}(1/2) \rightarrow x/2$ as $m_{p_1}\rightarrow\infty$ is uniform over $x$ in the interval in which $(f^{-n_2})'(g^{n_2}(1/2))$ is contained by (\ref{eq2})). Thus by (\ref{wandering_estimate}) and (\ref{wandering_estimate2}), we may further ensure $m_{p_1}=\boldsymbol{m}(p_1)$ is chosen sufficiently large so that (\ref{eq4}) holds for $k=1$ and any permissible extension of  $\boldsymbol{R}(p_1)$, $\boldsymbol{m}(p_1)$ such that (\ref{eq0}) holds and $1/16\geq\boldsymbol{\delta}(p_1)\geq C_{2}$. Lastly, we ensure that $\boldsymbol{m}(p_1)$ is sufficiently large so that 

\begin{equation}\label{last_Kobe_estimate}  \hspace{-0mm}\sqrt{1-\eta} <  \frac{((1+\pi/2)/2)^2}{((1+\pi/2)/2+|\xi_{j+1}-\xi_j|)^2} \textrm{ and } \frac{((1+\pi/2)/2)^2}{((1+\pi/2)/2-|\xi_{j+1}-\xi_j|)^2} < \hspace{-1mm}\sqrt{1+\eta}, \end{equation}

\noindent where $(\xi_j)_{j=1}^{\boldsymbol{m}(p_1)-1}$ are the ordered $(\boldsymbol{m}(p_1)-1)^{\textrm{th}}$ roots of unity. This concludes the definition of $\boldsymbol{R}(p_1)$, $n_2$, $C_{2}$, $\boldsymbol{m}(p_1)$. %Lastly, we ensure $\boldsymbol{m}(p_1)$ is sufficiently large so that the distortion terms $(3/2)^2/(3/2-|\xi_{j+1}-\xi_{j}|)^2$, $\log((3/2+|\xi_{j+1}-\xi_{j}|)/(3/2-|\xi_{j+1}-\xi_{j}|))$ of Theorems \ref{thm:Koebe}(a) and \ref{thm:Koebe_argument}(a). This concludes the definition of $\boldsymbol{r}(p_1)$, $n_2$, $C_{n_2}$, $\boldsymbol{m}(p_1)$.

%To ease notation, we redefine $p_2 \in \mathbb{N}$ such that $|z_{p_2}-g^{n_2}(1/2)|$ is minimized

We define $\boldsymbol{R}(p_2)$, $n_3$, $C_{3}$, $\boldsymbol{m}(p_2)$ similarly. Define $\boldsymbol{R}(p_2)$ permissible so that $1<\boldsymbol{R}(p_2)<1/\boldsymbol{s_0}(p_2)$. Again, since (\ref{distortion_quantity}) tends to $1$ as $n\rightarrow\infty$ for any permissible extension $\boldsymbol{\delta}$, $\boldsymbol{R}$, $(\boldsymbol{r}_j)$, $\boldsymbol{m}$, $\boldsymbol{w}$ such that (\ref{eq0}) holds, we can find $n_3\in\mathbb{N}$ with $n_3>n_2$ such that  (\ref{distortion_quantity}) with $n=n_3$ is contained in $\exp(\overline{D}(0,  \log\hspace{-1mm}\sqrt[\leftroot{-0}\uproot{2}n_0]{\boldsymbol{R}(p_2)})  )$. Proposition \ref{prop_on_derivative_of_f} gives a lower bound $C_{3} \leq |(f^{-n_3})'(g^{n_3}(1/2))|$. By Proposition \ref{small_support}, we can choose $\boldsymbol{m}(p_2)>\boldsymbol{m}_0(p_2)$ with $2(\boldsymbol{m}(p_1)-1) | \boldsymbol{m}(p_2)$ to be sufficiently large so that $\textrm{supp}(g_{\overline{z}})\cap D_{p_2} \subset \{ z\in D_{p_2}: |z-z_{p_2}| > \boldsymbol{s_0}(p_2) \}$ for $1/16\geq\boldsymbol{\delta}(p_2)\geq C_{3}$. Again, the lower bound in (\ref{eq2}) holds for $k=2$ since $m_{p_2}>2$, and the upper bound in (\ref{eq2}) follows from the upper bound in Proposition \ref{prop_on_derivative_of_f}. Ensuring $n_3$, $\boldsymbol{m}(p_2)$ are chosen so that (\ref{eq4}) also holds when $k=2$ is similar to the argument given when $k=1$. Lastly, we estimate (\ref{troublesome_term}) with $n=n_2$ and $(\xi_j)_{j=1}^{\boldsymbol{m}(p_1)-1}$ the ordered $(\boldsymbol{m}(p_1)-1)^{\textrm{th}}$ roots of unity. Since the dilatation of $\phi|_{\overline{D}(z_{p_2}, (1+\pi/2)/2)}$ vanishes as $\boldsymbol{m}(p_2)\rightarrow\infty$, by a normal family argument and Theorem \ref{Lehto-Virtanen} we may assume, for the purposes of estimating (\ref{troublesome_term}), that $\phi$ is conformal in ${\overline{D}(z_{p_2}, (1+\pi/2)/2)}$. Two applications of Theorem \ref{thm:Koebe}(a) together with the estimates (\ref{kobe_estimate_1}), (\ref{kobe_estimate_2}), and (\ref{last_Kobe_estimate}) then prove (\ref{second_delta}), and (\ref{first_delta}) is proven similarly. It follows that (\ref{eq1'}) holds for $k=2$. 

% by a normal family argument and the distortion Theorems \ref{thm:Koebe} and \ref{thm:Koebe_argument}, we can ensure that $\boldsymbol{m}(p_2)$ is sufficiently large so that the second term in (\ref{convexity_estimate}) (with $(\xi_j)_{j=1}^{\boldsymbol{m}(p_1)-1}$ the $(\boldsymbol{m}(p_1)-1)^{\textrm{st}}$ roots of unity) is sufficiently close to $1$ so that (\ref{eq1'}) holds for $k=2$. 

The rest of the sequences $(n_k)_{k=1}^\infty$, $(\boldsymbol{R}(p_k))_{k=1}^\infty$, $(\boldsymbol{m}(p_k))_{k=1}^\infty$, $(C_k)_{k=2}^{\infty}$ are chosen similarly. For any permissible extension $\boldsymbol{\delta}$, $\boldsymbol{R}$, $(\boldsymbol{r}_j)$, $\boldsymbol{m}$, $\boldsymbol{w}$ with $1/16\geq\boldsymbol{\delta}(p_k)\geq C_{{k+1}}$ for all $k\in\mathbb{N}$, under the extra assumption that (\ref{eq0}) holds, the relations (\ref{eq1}), (\ref{eq1'}), (\ref{eq2}), and (\ref{eq4}) follow from construction, and (\ref{eq3}) follows from Corollary \ref{dist_to_1/2}.

Now define $\boldsymbol{R}(l)=1$, $\boldsymbol{m}(l)=\boldsymbol{m}_0(l)$ for $l\in\mathbb{N}\setminus(p_k)_{k=1}^{\infty}$. This completes the definition of $\boldsymbol{R}$, $\boldsymbol{m}$. Note that if $\boldsymbol{\delta}(l)=\boldsymbol{w}(l)=0$ for $l\in\mathbb{N}\setminus(p_k)_{k=1}^{\infty}$, then $g|_{D_l}$ is holomorphic so that (\ref{eq0}) holds for such $l$. That (\ref{eq0}) holds for for any index $p_k$ and any permissible $(\boldsymbol{r}_j(p_k))_{j=1}^{m_{p_k}-1}$, $\boldsymbol{w}(p_k)$, $\boldsymbol{\delta}(p_k)$ with $1/16\geq\boldsymbol{\delta}(p_k)\geq C_{{k+1}}$ was ensured by the above selection of $\boldsymbol{R}(p_k)$, $\boldsymbol{m}(p_k)$. Thus, for our definitions of $(n_k)_{k=1}^{\infty}$ $\boldsymbol{R}$, $\boldsymbol{m}$, $(C_k)_{k=2}^{\infty}$, we have that if $(\boldsymbol{r}_j)$, $\boldsymbol{w}$, $\boldsymbol{\delta}$ are permissible with $1/16\geq \boldsymbol{\delta}(p_k)\geq C_{{k+1}}$ for all $k\in\mathbb{N}$ and $\boldsymbol{w}(l)=\boldsymbol{\delta}(l)=0$ for $l\in\mathbb{N}\setminus(p_k)_{k=1}^{\infty}$, then (\ref{eq0})-(\ref{eq4}) hold.

%Lastly, we justify why (\ref{eq0}) holds for our definition of $(n_k)_{k=1}^\infty$, $\boldsymbol{r}(n_k)$, $\boldsymbol{m}(n_k)$, $(C_k)_{k=1}^{\infty}$ and any permissible $(\boldsymbol{r}_j)$, $\boldsymbol{w}$, $\boldsymbol{\delta}$ with $1/16>\boldsymbol{\delta}(n_k)>C_k/2$ for all $k\in\mathbb{N}$, and $\boldsymbol{m}(l)=\boldsymbol{m}_0(l)$,  $\boldsymbol{r}(l)=1$ and $\boldsymbol{w}(l)=\boldsymbol{\delta}(l)=0$ for $l\in\mathbb{N}\setminus(n_k)_{k=1}^{\infty}$. If $\boldsymbol{w}(l)=\boldsymbol{\delta}(l)=0$, $g|_{D_l}$ is holomorphic and so the relation (\ref{eq0}) holds there. That (\ref{eq0}) holds for $n=n_k$ was ensured by construction. 

%For $l\in\mathbb{N}\setminus(n_k)_{k=1}^\infty$, the definitions of $\boldsymbol{m}(l)$, $\boldsymbol{r}(l)$ need only be such that $\boldsymbol{m}$, $\boldsymbol{r}$ are permissible, and $\textrm{supp}(g_{\overline{z}})\cap D_{l} \subset \{ z\in D_l: |z-z_l| > \boldsymbol{s_0}(l) \}$ for permissible $\boldsymbol{\delta}$, $(\boldsymbol{r}_j)_{j=1}^m$, $\boldsymbol{w}$. Indeed, for such $l$, we may define $g|_{D_l}(z)\equiv(z-z_l)^{\boldsymbol{m}(l)}$ for any permissible $\boldsymbol{m}$. Thus $\textrm{supp}(g_{\overline{z}})\cap D_{l}=\emptyset$ for $l\in\mathbb{N}\setminus(n_k)_{k=1}^\infty$. 

\end{proof}

\begin{rem} We henceforth fix $\boldsymbol{R}$, $\boldsymbol{m}$ as in the statement of Proposition \ref{selection_of_r_m}, and continue to use the sequences $(n_k)_{k=1}^\infty$, $(C_k)_{k=2}^\infty$ as given in Proposition \ref{selection_of_r_m}. Note that for $l\in\mathbb{N}\setminus(p_k)_{k=1}^\infty$, one has $\boldsymbol{m}(l)=\boldsymbol{m}_0(l)$, and $\boldsymbol{R}(l)=1$ by the proof of Proposition \ref{selection_of_r_m}. 
\end{rem}

%and we select $\boldsymbol{\delta}(1) < C_{n_2}$

%\begin{prop}\label{fixpoint_application} For each $l\in\mathbb{N}$ and $1\leq k\leq l$, there exist $\boldsymbol{\delta}(n_k)$ with $1/16>\boldsymbol{\delta}(n_k)>C_k/2$, $(\boldsymbol{r}_j(n_k))_{j=1}^{m_{n_k}}$, and $\boldsymbol{w}(n_k) \in \overline{D}(0,3/4)$ (for a fixed $k$, these parameters may still depend on $l$) such that for any critical point $c$ of $g$ with $c \in D_{p_k}$, one has $f^{n_{k+1}+1}(g(c))=+1$. Moreover, $f^{n_{k+1}}(w_{n_k})=\phi(z_{p_{k+1}})$.\end{prop}

\begin{prop}\label{fixpoint_application} Let $l\in\mathbb{N}$. There exist permissible $\boldsymbol{\delta}$, $(\boldsymbol{r}_j)$, $\boldsymbol{w}$ (depending on $l$) such that for $1\leq k\leq l$, one has:

\begin{equation}  f^{n_{k+1}}\left(w_{p_k}\right)=\phi\left(z_{p_{k+1}}\right)\textrm{, } 1/16\geq\boldsymbol{\delta}\left(p_k\right)\geq C_{{k+1}}\textrm{, and } f^{n_{k+1}+1}(g(c))=+1 \end{equation}

\noindent for any critical point $c$ of $g$ with $c \in D_{p_k}$.

\end{prop}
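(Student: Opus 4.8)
The plan is to set up a finite-dimensional continuous self-map of a compact convex set and invoke Brouwer's Fixpoint Theorem (Theorem \ref{fixpoint}), with the estimates (\ref{eq0})--(\ref{eq4}) of Proposition \ref{selection_of_r_m} supplying exactly the hypotheses needed to make the construction go through. Fix $l \in \mathbb{N}$. For indices $p_1, \dots, p_l$ we wish to choose $\boldsymbol{w}(p_k)$, $\boldsymbol{\delta}(p_k)$, and $(\boldsymbol{r}_j(p_k))_{j=1}^{\boldsymbol{m}(p_k)-1}$ (all other parameters being fixed as prescribed, with $\boldsymbol{w}(l')=\boldsymbol{\delta}(l')=0$ off the sequence $(p_k)$), and the natural domain for these is a product
\[
X := \prod_{k=1}^{l}\left( \overline{D}\!\left(z_{p_{k+1}}, \tfrac14\right) \times \left[C_{k+1}, \tfrac{1}{16}\right] \times E_{\varepsilon_k}\right)
\]
where $E_{\varepsilon_k}$ is the convex set from (\ref{convex_set}) with the relevant value of $m$ and $\varepsilon_k := \log\sqrt[\leftroot{-1}\uproot{2}n_0]{\boldsymbol{R}(p_{k-1})}$; here I am encoding $w_{p_k}$ through the target it should map to, i.e. I will actually work with the variable $f^{n_{k+1}}(w_{p_k})$. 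By Lemma \ref{convexity} each $E_{\varepsilon_k}$ is convex and compact, so $X$ is a non-empty compact convex subset of a Euclidean space.

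The map $\Phi: X \to X$ whose fixpoint we seek is built as follows. Given a point of $X$ we obtain permissible parameters $\boldsymbol{\delta}$, $(\boldsymbol{r}_j)$, $\boldsymbol{w}$ (the estimate $1/16 \geq \boldsymbol{\delta}(p_k) \geq C_{k+1}$ holds by definition of $X$, and (\ref{eq0}) then holds by Proposition \ref{selection_of_r_m}), hence a quasiregular $g$ via (\ref{quasiregular_definition}) and Theorem \ref{g_extension}, hence by the Measurable Riemann Mapping Theorem (Theorem \ref{MRT}) a correction map $\phi$, normalized as in (\ref{hydrodynamical}), with $f := g\circ\phi^{-1}$ entire. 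In the $k$-th coordinate block, $\Phi$ sends the current data to: for the $w$-slot, the point $\phi^{-1}\circ f^{-n_{k+1}}\circ\phi(z_{p_{k+1}})$ — the branch of $f^{-n_{k+1}}$ here is the one tracking the real orbit, well-defined and landing in $\overline{D}(1/2,1/8)\subset\overline{D}(0,3/4)$ by Corollary \ref{dist_to_1/2} and (\ref{eq3}), and composing with $\phi^{-1}$ is harmless since $\phi$ is close to the identity; one then checks this lands in $\overline{D}(z_{p_{k+1}},1/4)$ (this is really (\ref{eq3}) again after translating coordinates, and the point is that a fixpoint $w_{p_k}$ of this coordinate satisfies $f^{n_{k+1}}(w_{p_k}) = \phi(z_{p_{k+1}})$). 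For the $\delta$-slot, $\Phi$ sets $\boldsymbol{\delta}(p_k)$ to be the scaling factor forced by (\ref{eq2}) and Proposition \ref{small_support} so that the critical values of $g|_{D_{p_k}}$ have exactly the modulus needed; this is where the bounds $C_{k+1} \leq \Lambda(\boldsymbol{\delta}(p_k), \boldsymbol{m}(p_k))\text{-type quantity} \leq 1/16$ of (\ref{eq2}) guarantee the output stays in $[C_{k+1}, 1/16]$. For the $(r_j)$-slot, $\Phi$ outputs the vector $(r_j)$ such that the perturbed critical value $r_j \cdot (\text{critical value of } g|_{D_{p_k}})$ lands, after applying the appropriate branch of $f^{-n_{k+1}}\circ\phi$ and dividing by $(f^{-n_{k+1}})'(g^{n_{k+1}}(1/2))$, on the preimage under $g|_{D_{p_{k+1}}}$ of a point mapping to $+1$ — concretely, on one of the rescaled roots of $-1$ in $D_{p_{k+1}}$; that the output lies in $E_{\varepsilon_k}$ is precisely the content of (\ref{eq1}) and (\ref{eq1'}), which were arranged so that the relevant distortion quotients lie in $\exp(\overline{D}(0,\varepsilon_k))$.

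Continuity of $\Phi$ is the heart of the matter and the step I expect to be the main obstacle. It requires: (i) continuity of $\mu \mapsto \phi_\mu(z)$, which is Theorem \ref{parameters} (continuous dependence on parameters); (ii) continuity of the parameters-to-Beltrami-coefficient assignment into $L^\infty(\mathbb{C})$, which is supplied by Proposition \ref{continuity_into_L^infty} together with Remark \ref{localized_dilatation} (the dilatation on $U$ is independent of these parameters, so only the $D_n$ contributions vary, continuously); (iii) that the branches of $f^{-n_{k+1}}$ used vary continuously — this follows because by Proposition \ref{close_to_id} and Remark \ref{choice_of_lambda} the relevant orbits stay in a fixed region away from critical values, so the implicit function theorem / monodromy gives continuous branch selection, and (iv) continuity of $\phi^{-1}$ in the parameters, again from Theorem \ref{parameters}. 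Granting continuity, Theorem \ref{fixpoint} produces a fixpoint $(\boldsymbol{w}(p_k), \boldsymbol{\delta}(p_k), (\boldsymbol{r}_j(p_k)))_{k=1}^l \in X$. Unwinding the three coordinate conditions at the fixpoint gives exactly $f^{n_{k+1}}(w_{p_k}) = \phi(z_{p_{k+1}})$, the inequality $1/16 \geq \boldsymbol{\delta}(p_k) \geq C_{k+1}$, and — since the critical points $c$ of $g$ in $D_{p_k}$ are the $\boldsymbol{m}(p_k)-1$ critical points of $\psi_{\delta, m}$ translated to $z_{p_k}$, with critical values the perturbed rescaled roots of $-1$, which at the fixpoint are mapped by $f^{-n_{k+1}}\circ\phi$ to points of $D_{p_{k+1}}$ that $g$ (equivalently $f\circ\phi$, up to the usual correction-map bookkeeping) sends to $+1$ — the identity $f^{n_{k+1}+1}(g(c)) = +1$ for each such $c$, completing the proof. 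One small point to handle: the roots of $-1$ used for the $\beta$-perturbation in $D_{p_k}$ number $\boldsymbol{m}(p_k)-1$ while those relevant as targets in $D_{p_{k+1}}$ number $\boldsymbol{m}(p_{k+1})-1$; the divisibility condition $2(\boldsymbol{m}(p_k)-1) \mid \boldsymbol{m}(p_{k+1})$ arranged in the proof of Proposition \ref{selection_of_r_m} is what lets each of the former map onto one of the latter, so this causes no trouble.
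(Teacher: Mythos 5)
Your overall strategy is the same as the paper's (a Brouwer fixpoint for a self-map of a product of sets of the form $\overline{D}(0,3/4)\times[C_{k+1},1/16]\times E_{\varepsilon_k}$, with convexity from Lemma \ref{convexity}, the containments from (\ref{eq1})--(\ref{eq3}), and continuity from Proposition \ref{continuity_into_L^infty}, Remark \ref{localized_dilatation} and Theorem \ref{parameters}), but the self-map you actually describe has genuine defects in the $w$-coordinate. First, you take the $w$-slot variable to be $f^{n_{k+1}}(w_{p_k})$, ranging in $\overline{D}(z_{p_{k+1}},1/4)$; but to produce $g$ (hence $\phi$ and $f$) from a point of your set $X$ you need the actual parameter $\boldsymbol{w}(p_k)\in\overline{D}(0,3/4)$ that enters $\rho_w$ in (\ref{composition}), and recovering it from $f^{n_{k+1}}(w_{p_k})$ requires the very function $f$ you are constructing --- so the assignment ``point of $X\mapsto$ Beltrami coefficient'' is circular and $\Phi$ is not well defined. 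Second, your stated $w$-output $\phi^{-1}\circ f^{-n_{k+1}}\circ\phi(z_{p_{k+1}})$ lies near $1/2$ (Corollary \ref{dist_to_1/2}), so the claim that it ``lands in $\overline{D}(z_{p_{k+1}},1/4)$'' is false; and even after repairing the domain to $\overline{D}(0,3/4)$, the extra $\phi^{-1}$ means a fixpoint yields $f^{n_{k+1}}(\phi(w_{p_k}))=\phi(z_{p_{k+1}})$ rather than the required exact identity $f^{n_{k+1}}(w_{p_k})=\phi(z_{p_{k+1}})$; ``$\phi$ is close to the identity'' cannot convert an approximate statement into the exact one the proposition asserts. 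The paper's map (\ref{fixpoint_map}) avoids both problems by using the parameter $w_{p_1}\in\overline{D}(0,3/4)$ itself as the variable and outputting $f^{-n_{2}}(\phi(z_{p_{2}}))$, which lies in $\overline{D}(0,3/4)$ by (\ref{eq3}); no $\phi$-correction on the $w$ side is needed because $f$ and $g$ have the same critical values.

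The second gap is that you never pin down the mechanism by which a fixpoint forces $f^{n_{k+1}+1}(g(c))=+1$. In the paper the $\delta$-output is the explicit quantity $(m_{p_k})^{1/m_{p_k}}\bigl(\tfrac{m_{p_k}}{m_{p_k}-1}(f^{-n_{k+1}})'(g^{n_{k+1}}(1/2))\bigr)^{(m_{p_k}-1)/m_{p_k}}$ (in $[C_{k+1},1/16]$ by (\ref{eq2})), chosen exactly so that at a fixpoint $\Lambda(\delta_{p_k},m_{p_k})=(f^{-n_{k+1}})'(g^{n_{k+1}}(1/2))$, and the $r_j$-output is the distortion quotient $\bigl(f^{-n_{k+1}}\circ\phi(z_{p_{k+1}}+\xi_j)-f^{-n_{k+1}}\circ\phi(z_{p_{k+1}})\bigr)\big/\bigl(\xi_j\,(f^{-n_{k+1}})'(g^{n_{k+1}}(1/2))\bigr)$, which lies in $E_{\varepsilon_k}$ by (\ref{eq1}) and (\ref{eq1'}); then the cancellation (\ref{cancellation}) shows that each critical value $w_{p_k}+r_j\xi_j\Lambda(\delta_{p_k},m_{p_k})$ equals $f^{-n_{k+1}}\circ\phi(z_{p_{k+1}}+\xi_j)$, whose image under $f^{n_{k+1}+1}$ is $g(z_{p_{k+1}}+\xi_j)=\xi_j^{\,m_{p_{k+1}}}=1$ by the divisibility $2(m_{p_k}-1)\mid\boldsymbol{m}(p_{k+1})$. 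Your description instead applies ``$f^{-n_{k+1}}\circ\phi$'' to the critical value itself --- but that branch is only defined along the escaping real orbit, nowhere near $w_{p_k}$ --- and aims at ``rescaled roots of $-1$ in $D_{p_{k+1}}$'', which are the critical points of $g|_{D_{p_{k+1}}}$, not points that $g$ sends to $+1$; the correct targets are the boundary points $z_{p_{k+1}}+\xi_j$ with $\xi_j$ the $(m_{p_k}-1)^{\textrm{th}}$ roots of $-1$. Without the explicit $\delta$- and $r_j$-outputs and the resulting cancellation, the fixpoint you obtain does not deliver the conclusion $f^{n_{k+1}+1}(g(c))=+1$, so the argument as written does not prove the proposition.
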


\begin{rem} Note that $g$ and $f$ share the same critical values ($f$ differs from $g$ by pre-composition with a homeomorphism), and that $f^n(1)\rightarrow\infty$ as $n\rightarrow\infty$.
\end{rem}

\begin{proof} Let us consider the case $l=k=1$. For $n\not= p_1$, define $\boldsymbol{w}(n)=\boldsymbol{\delta}(n)=0$ and $\boldsymbol{r}_j(n)=1$ for $1\leq j\leq m_n-1$. In order to choose $\boldsymbol{w}(p_1)$, $\boldsymbol{\delta}(p_1)$, $(\boldsymbol{r}_j(p_1))_{j=1}^{m_{p_1}-1}$, consider the following map:

\begin{align}\label{fixpoint_map} \overline{D}(0, 3/4) \times [C_{2}, 1/16] \times E_{\varepsilon_1} \longrightarrow \overline{D}(0, 3/4) \times [C_{2}, 1/16] \times E_{\varepsilon_1} \phantom{as}
%\\ \left(w_{p_1}, \delta_{p_1}, \left(r_j(p_1)\right)_{j=1}^{m_{p_1}-1}\right) \longrightarrow \nonumber
% \bigg( f^{-n_2}(\phi(z_{\tilde{p}_2})), \left(m_{p_{1}}\right)^{\frac{1}{m_{p_{1}}}}\left( \frac{m_{p_{1}}}{m_{p_{1}}-1} \left(f^{-n_{2}}\right)'\left(g^{n_{2}}(1/2)\right) \right)^{\frac{m_{p_{1}}-1}{m_{p_{1}}}}, 
%\\ \left(\frac{f^{-n_2}\circ\phi(z_{\tilde{p}_2} + \xi_j) - f^{-n_2}\circ\phi(z_{\tilde{p}_2})}{\xi_j\cdot(f^{-n_2})'(g^{n_2}(1/2)) }  \right)_{j=1}^{m_{p_1}-1}\bigg), \phantom{asdfadsfasfasddasd} \nonumber
\\ \begin{pmatrix}w_{p_1} \vspace{3mm}\\ \delta_{p_1}  \vspace{3mm} \\  \left(r_j\left(p_1\right)\right)_{j=1}^{m_{p_1}-1}  \end{pmatrix} \mapsto \nonumber \begin{pmatrix} f^{-n_2}\left(\phi\left(z_{p_2}\right)\right) \\   \left(m_{p_{1}}\right)^{\frac{1}{m_{p_{1}}}}\left( \frac{m_{p_{1}}}{m_{p_{1}}-1} \left(f^{-n_{2}}\right)'\left(g^{n_{2}}(1/2)\right) \right)^{\frac{m_{p_{1}}-1}{m_{p_{1}}}}\\ \left(\frac{f^{-n_2}\circ\phi(z_{p_2} + \xi_j) - f^{-n_2}\circ\phi(z_{p_2})}{\xi_j\cdot(f^{-n_2})'(g^{n_2}(1/2)) }  \right)_{j=1}^{m_{p_1}-1} \end{pmatrix} \nonumber
%\\ \scriptstyle{ \left(w_{p_1}, \delta_{p_1}, \left(r_j(p_1)\right)_{j=1}^{m_{p_1}-1}\right) \longrightarrow \nonumber
% \bigg( f^{-n_2}(\phi(z_{\tilde{p}_2})), \left(m_{p_{1}}\right)^{\frac{1}{m_{p_{1}}}}\left( \frac{m_{p_{1}}}{m_{p_{1}}-1} \left(f^{-n_{2}}\right)'\left(g^{n_{2}}(1/2)\right) \right)^{\frac{m_{p_{1}}-1}{m_{p_{1}}}}, \left(\frac{f^{-n_2}\circ\phi(z_{\tilde{p}_2} + \xi_j) - f^{-n_2}\circ\phi(z_{\tilde{p}_2})}{\xi_j\cdot(f^{-n_2})'(g^{n_2}(1/2)) }  \right)_{j=1}^{m_{p_1}-1}\bigg), } \nonumber
\end{align}

\noindent where we recall the notation $(\xi_j)_{j=1}^{m_{p_1}-1}$ for the $(m_{p_1}-1)^{\textrm{th}}$ roots of $-1$, and $\varepsilon_1:=\log(\hspace{-1mm}\sqrt[\leftroot{-1}\uproot{2}n_0]{\boldsymbol{R}(p_1)})$. Our goal is to find a fixpoint of (\ref{fixpoint_map}), since for such a fixpoint we would have

\begin{align}\label{cancellation} w_{p_1} + r_j(p_1)\cdot\xi_j\cdot\left(\delta_{p_1}\left(\frac{\delta_{p_1}}{{m_{p_1}}}\right)^{\left(\frac{1}{{m_{p_1}}-1}\right)}\left(\frac{{m_{p_1}}-1}{{m_{p_1}}}\right)\right) 
\\ = f^{-n_2}\left(\phi\left(z_{p_2}\right)\right) +  \left(\frac{f^{-n_2}(\phi(z_{p_2} + \xi_j)) - f^{-n_2}(\phi(z_{p_2}))}{\xi_j\cdot(f^{-n_2})'(g^{n_2}(1/2))}\right) \xi_j(f^{-n_2})'(g_{n_2}(1/2))  \nonumber
\\ = f^{-n_2}\left(\phi\left(z_{p_2} + \xi_j\right)\right), \nonumber
\end{align}

\noindent for any $1\leq j\leq m_{p_1}-1$. As $j$ ranges between $1$ and ${m_{p_1}}-1$, the left-hand side of (\ref{cancellation}) ranges over all critical values of $f$ arising from critical points of $g$ in $D_{p_1}$, whereas the right-hand side is mapped to $1$ by $f^{n_2+1}$ for each $1\leq j \leq {m_{p_1}}-1$, as 

\begin{equation} f^{n_2+1}(f^{-n_2}\left(\phi\left(z_{p_2} + \xi_j\right)\right))=g(z_{p_2} + \xi_j)=(\xi_j)^{\boldsymbol{m}(p_2)}=1, \end{equation} 

\noindent where the last equality holds since $2(m_{p_1}-1) | \boldsymbol{m}(p_2)$ as noted in the proof of Proposition \ref{selection_of_r_m} (see also Figure \ref{fig:general_strat}). 

Why does (\ref{fixpoint_map}) have a fixpoint? This is a consequence of Theorem \ref{fixpoint} once we have established the necessary hypotheses. Indeed, note that $E_{\varepsilon_1}$ is convex by Lemma \ref{convexity}, and so the domain of (\ref{fixpoint_map}) is convex because it is a product of convex sets. The image of $\left(w_{p_1}, \delta_{p_1}, (r_j(p_1))_{j=1}^{m_{p_1}-1}\right)$ under (\ref{fixpoint_map}) is contained in $\overline{D}(0,3/4) \times [C_{2}, 1/16] \times E_{\varepsilon_1}$ by Proposition \ref{selection_of_r_m}: (\ref{eq3}) ensures the first factor in the image is contained in $\overline{D}(0,3/4)$, (\ref{eq2}) ensures the second factor is contained in $[C_{2}, 1/16]$, and (\ref{eq1}), (\ref{eq1'}) ensure the third factor is contained in $E_{\varepsilon_1}$. Lastly, continuity of (\ref{fixpoint_map}) follows from Proposition \ref{continuity_into_L^infty} and Theorem \ref{parameters}. Namely, (\ref{fixpoint_map}) is a composition of two maps: the first is an $L^{\infty}(\mathbb{C})$-valued map sending any $\left(w_{p_1}, \delta_{p_1}, (r_j({p_1}))_{j=1}^{m_{p_1}-1}\right) \in \overline{D}(0, 3/4) \times [C_{2}, 1/16] \times E_{\varepsilon_1}$ to $g_{\overline{z}}/g_{z}$, and this is continuous by Proposition \ref{continuity_into_L^infty} and Remark \ref{localized_dilatation}. The second map in the composition maps $L^{\infty}(\mathbb{C})$ into $\overline{D}(0, 3/4) \times [C_{2}, 1/16] \times E_{\varepsilon_1}$, and is as described in Theorem \ref{parameters} (and in particular is continuous by Theorem \ref{parameters}). Thus the hypotheses of Theorem \ref{fixpoint} are satisfied, and so (\ref{fixpoint_map}) has a fixpoint, as needed.

For larger $l$ and $1\leq k\leq l$, a similar argument holds. Namely, for $t\in\mathbb{N}\setminus\{ p_1, p_2, ...., p_l \}$, one defines $\boldsymbol{w}(t)=\boldsymbol{\delta}(t)=0$ and $(\boldsymbol{r}_j(t))=1$ for $1\leq j\leq m_{t}-1$. A version of the mapping (\ref{fixpoint_map}) with $3l$ product factors is considered, and is continuous for completely analogous reasons to the case $l=1$, whence a fixpoint corresponds to a choice of

\begin{equation} \left(\boldsymbol{\delta}(p_k)\right)_{k=1}^l, \left(\left(\boldsymbol{r}_j\left(p_k\right)\right)_{j=1}^{m_{p_k}-1}\right)_{k=1}^l, (\boldsymbol{w}(p_k))_{k=1}^l \end{equation}

\noindent satisfying the conclusions of Proposition \ref{fixpoint_application}.

\end{proof}

%\noindent We next take a limit so as to ensure the conclusions of Proposition \ref{fixpoint_application} hold over \emph{all} $1\leq k <\infty$:

\begin{prop}\label{penultimate} There exist permissible $\boldsymbol{\delta}$, $(\boldsymbol{r}_j)$, $\boldsymbol{w}$ such that for $1\leq k <\infty$, one has  

\begin{equation}\label{penultimate_relations}  f^{n_{k+1}}\left(w_{p_k}\right)=\phi\left(z_{p_{k+1}}\right)\textrm{, } 1/16\geq\boldsymbol{\delta}\left(p_k\right)\geq C_{{k+1}}\textrm{, and } f^{n_{k+1}+1}(g(c))=+1 \end{equation}

\noindent for any critical point $c$ of $g$ with $c \in D_{p_k}$. Furthermore, for $1\leq k <\infty$:

\begin{equation}\boldsymbol{\delta}(p_k) =  \left(\boldsymbol{m}(p_k)\right)^{\frac{1}{\boldsymbol{m}(p_k)}}\left( \frac{\boldsymbol{m}(p_k)}{\boldsymbol{m}(p_k)-1} \left(f^{-n_{k+1}}\right)'\left(g^{n_{k+1}}(1/2)\right) \right)^{\frac{\boldsymbol{m}(p_k)-1}{\boldsymbol{m}(p_k)}}.
\nonumber \end{equation}

\end{prop}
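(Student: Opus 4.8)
The plan is to upgrade Proposition \ref{fixpoint_application}, which gives a \emph{finite} set of choices $(\boldsymbol{\delta}(p_k),(\boldsymbol{r}_j(p_k)),\boldsymbol{w}(p_k))_{k=1}^l$ satisfying (\ref{penultimate_relations}) for $1\le k\le l$, to an \emph{infinite} set of choices, by a diagonal/compactness argument. First I would observe that for each $l\in\mathbb{N}$ Proposition \ref{fixpoint_application} furnishes permissible parameters, which I denote $\boldsymbol{\delta}^{(l)}$, $(\boldsymbol{r}_j)^{(l)}$, $\boldsymbol{w}^{(l)}$, agreeing with the ``default'' values ($\boldsymbol{w}(t)=\boldsymbol{\delta}(t)=0$, $\boldsymbol{r}_j(t)=1$) for all indices $t\notin\{p_1,\dots,p_l\}$ and satisfying the $l$ equations of (\ref{penultimate_relations}). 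The key point is that each nontrivial coordinate lives in a fixed compact set: $\boldsymbol{w}^{(l)}(p_k)\in\overline{D}(0,3/4)$, $\boldsymbol{\delta}^{(l)}(p_k)\in[C_{k+1},1/16]$, and $(\boldsymbol{r}_j^{(l)}(p_k))_{j}\in E_{\varepsilon_k}$ (a compact subset of $\mathbb{C}^{\boldsymbol{m}(p_k)-1}$, being closed and bounded by (\ref{convex_set})). Hence the sequence of ``parameter vectors'' $(\boldsymbol{\delta}^{(l)},(\boldsymbol{r}_j)^{(l)},\boldsymbol{w}^{(l)})_{l\geq1}$ lives in the compact (metrizable) product space $\prod_{k\geq1}\bigl(\overline{D}(0,3/4)\times[C_{k+1},1/16]\times E_{\varepsilon_k}\bigr)$, and so by a diagonal argument there is a subsequence $(l_i)$ along which all coordinates converge; call the limit $\boldsymbol{\delta}^\infty$, $(\boldsymbol{r}_j)^\infty$, $\boldsymbol{w}^\infty$. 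This limit is permissible (each constraint is closed) and satisfies $1/16\geq\boldsymbol{\delta}^\infty(p_k)\geq C_{k+1}$ for all $k$ and $\boldsymbol{w}^\infty(l)=\boldsymbol{\delta}^\infty(l)=0$ for $l\notin(p_k)$, so Proposition \ref{selection_of_r_m} applies and (\ref{eq0})--(\ref{eq4}) hold.

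Next I would verify that the defining relations (\ref{penultimate_relations}) pass to the limit. Fix $k$; for all $i$ large enough that $l_i\geq k$, the parameters $\boldsymbol{\delta}^{(l_i)},(\boldsymbol{r}_j)^{(l_i)},\boldsymbol{w}^{(l_i)}$ satisfy the three $k$-th relations of (\ref{penultimate_relations}) by Proposition \ref{fixpoint_application}. I then want to let $i\to\infty$. The nontrivial point is continuity of the maps $(\boldsymbol{\delta},(\boldsymbol{r}_j),\boldsymbol{w})\mapsto f^{n_{k+1}}(w_{p_k})$ and $(\boldsymbol{\delta},(\boldsymbol{r}_j),\boldsymbol{w})\mapsto \phi(z_{p_{k+1}})$ and, more subtly, $\mapsto f^{n_{k+1}+1}(g(c))$ for a critical point $c\in D_{p_k}$. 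These follow exactly as in the proof of Proposition \ref{fixpoint_application}: the assignment $(\boldsymbol{\delta},(\boldsymbol{r}_j),\boldsymbol{w})\mapsto g_{\overline z}/g_z\in L^\infty(\mathbb{C})$ is continuous by Proposition \ref{continuity_into_L^infty} together with Remark \ref{localized_dilatation} (only finitely many, in fact the relevant, disc-maps change), then $\mu\mapsto\phi_\mu$ is continuous by Theorem \ref{parameters}, hence $\phi(z_{p_{k+1}})$, the entire function $f=g\circ\phi^{-1}$ on compact sets, and therefore its iterates and local inverse branches at the (escaping, hence eventually far-from-critical) points involved all depend continuously on the parameters. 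Since each of the three relations is an equality between such continuous functions, and the equality holds for all large $i$, it holds in the limit. As $k$ was arbitrary, $\boldsymbol{\delta}^\infty$, $(\boldsymbol{r}_j)^\infty$, $\boldsymbol{w}^\infty$ satisfy (\ref{penultimate_relations}) for all $1\le k<\infty$.

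Finally, the displayed formula $\boldsymbol{\delta}(p_k)=\left(\boldsymbol{m}(p_k)\right)^{1/\boldsymbol{m}(p_k)}\bigl(\tfrac{\boldsymbol{m}(p_k)}{\boldsymbol{m}(p_k)-1}(f^{-n_{k+1}})'(g^{n_{k+1}}(1/2))\bigr)^{(\boldsymbol{m}(p_k)-1)/\boldsymbol{m}(p_k)}$ is not a new constraint but simply the second coordinate of the fixpoint equation for the map (\ref{fixpoint_map}) (its $3l$-factor analogue): at a fixpoint, $\delta_{p_k}$ equals the second output coordinate, which is precisely the right-hand side above. This holds for each finite $l$ and each $1\le k\le l$, and passes to the limit by the same continuity of $(\boldsymbol{\delta},(\boldsymbol{r}_j),\boldsymbol{w})\mapsto(f^{-n_{k+1}})'(g^{n_{k+1}}(1/2))$; alternatively one just records that this identity is an immediate consequence of $f^{n_{k+1}}(w_{p_k})=\phi(z_{p_{k+1}})$ and the chain rule along the orbit, since it is the relation forced by demanding (via (\ref{cancellation})) that the critical values of $g|_{D_{p_k}}$ land exactly on $f^{-n_{k+1}}(\phi(\partial D_{p_{k+1}}))$-preimages of the $\boldsymbol{m}(p_{k+1})$-th roots of unity.

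I expect the main obstacle to be the continuity/compactness bookkeeping: one must be careful that varying $\boldsymbol{\delta}(p_k),(\boldsymbol{r}_j(p_k)),\boldsymbol{w}(p_k)$ over \emph{all} $k$ simultaneously still yields continuity of each \emph{fixed} finite-time quantity $f^{n_{k+1}}(w_{p_k})$, which is fine because $f$ depends continuously on $\mu=g_{\overline z}/g_z$ (Theorem \ref{parameters}) and $\mu$ depends continuously on the whole parameter sequence in the product topology (changing far-away discs changes $g$, hence $\mu$, only on a region whose $L^\infty$-contribution is controlled — indeed each individual disc-map is continuous by Proposition \ref{continuity_into_L^infty}, and the sup over all of them of the \emph{difference} is small when all but finitely many parameters are close to the limit, because $\|g_{\overline z}/g_z\|_\infty$ is uniformly bounded by $k_3<1$ and the supports are disjoint). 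The diagonal extraction itself is routine, as is the verification that the limiting parameters are permissible. With Proposition \ref{penultimate} in hand, the proof of Theorem \ref{main_theorem} is then completed using (\ref{eq4}) to propagate the nesting (\ref{desired_behavior2}) along the subsequence $(p_k)$ and using $f^{n_{k+1}+1}(g(c))=+1$ together with $f^n(1)\to\infty$ to conclude that every singular value escapes.
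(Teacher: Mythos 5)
Your proposal is correct and follows essentially the same route as the paper: take the finite-stage solutions supplied by Proposition \ref{fixpoint_application}, use compactness of $\prod_{k}\left(\overline{D}(0,3/4)\times[C_{k+1},1/16]\times E_{\varepsilon_k}\right)$ to extract a coordinatewise convergent subsequence, and pass the relations (\ref{penultimate_relations}) — together with the $\boldsymbol{\delta}(p_k)$ identity, which is indeed just the second coordinate of the fixpoint equation at each finite stage — to the limit via convergence of the correction maps. Two small caveats: your claim that the parameter-to-dilatation map is $L^\infty(\mathbb{C})$-continuous in the product topology is not needed and is not quite right (product-topology closeness constrains only finitely many discs, so the sup of the dilatation differences over all discs need not be small); what holds, and what the paper uses via Remark \ref{localized_dilatation}, is almost-everywhere convergence of the dilatations with the uniform bound $k_3<1$, after which the paper obtains $\phi^l\to\phi$ locally uniformly by normality together with Theorem \ref{Lehto-Virtanen} and the uniqueness clause of Theorem \ref{MRT}, rather than by invoking Theorem \ref{parameters} directly with the hydrodynamical normalization (\ref{hydrodynamical}). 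Also, your ``alternative'' derivation of the $\boldsymbol{\delta}(p_k)$ formula from the other relations by the chain rule does not work — that identity is not a consequence of (\ref{penultimate_relations}) but of the fixpoint construction itself, as in your primary justification.
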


\begin{proof} For each $l<\infty$, Proposition \ref{fixpoint_application} guarantees the existence of parameters

\begin{align}\label{finite_solution} \left(\boldsymbol{w}^l(p_k), \boldsymbol{\delta}^l(p_k), \left(\boldsymbol{r}_j^l\left(p_k\right)\right)_{j=1}^{m_{p_k}-1}\right)_{k=1}^l  \in \prod_{k=1}^l \left( \overline{D}(0, 3/4) \times [C_{{k+1}}, 1/16] \times E_{\varepsilon_k} \right),
\\ \textrm{ with } \boldsymbol{w}(t)=\boldsymbol{\delta}(t)=0 \textrm{ and } (\boldsymbol{r}_j(t))_{j=1}^{m_{t}-1}=\boldsymbol{1}  \textrm{ for } t\in\mathbb{N}\setminus\{ p_1, p_2, ...., p_l \}\textrm{,}   \phantom{asd}     \nonumber
\end{align}

\noindent such that the associated entire function $f^l$ satisfies (\ref{penultimate_relations}) for $1\leq k\leq l$. We have emphasized notationally the dependence, for fixed $k$, of $ \left(\boldsymbol{w}^l(p_k), \boldsymbol{\delta}^l(p_k), (\boldsymbol{r}_j^l(p_k))_{j=1}^{m_{p_k}-1}\right)$ on $l$. The left-hand side of the first line in (\ref{finite_solution}) embeds into the compact space

\begin{equation}\label{infinite_product} \prod_{k=1}^\infty \left( \overline{D}(0, 3/4) \times [C_{{k+1}}, 1/16] \times E_{\varepsilon_k} \right),
\end{equation}

\noindent whence we can take a convergent subsequence. In other words, there exist $(\boldsymbol{\delta}(p_k))_{k=1}^\infty$, $\left((\boldsymbol{r}_j(p_k)\right)_{j=1}^{m_{p_k}-1})_{k=1}^\infty$, and $(\boldsymbol{w}(p_k))_{k=1}^\infty$ such that

\begin{equation}\label{q.c._convergence} \boldsymbol{w}^l(p_k) \xrightarrow{l\rightarrow\infty} \boldsymbol{w}(p_k)\emph{,  } \boldsymbol{\delta}^l(p_k) \xrightarrow{l\rightarrow\infty} \boldsymbol{\delta}(p_k)\emph{,  } (\boldsymbol{r}_j^l(p_k))_{j=1}^{m_{p_k}-1} \xrightarrow{l\rightarrow\infty} (\boldsymbol{r}_j(p_k))_{j=1}^{m_{p_k}-1}\emph{,  }
\end{equation}

\noindent for each $1 \leq k <\infty$, where we have suppressed the subsequence in $l$ to ease notation. We claim that the parameters 

\begin{align}\label{infinite_solution} (\boldsymbol{\delta}(p_k))_{k=1}^\infty\textrm{, } \left(\left(\boldsymbol{r}_j(p_k)\right)_{j=1}^{m_{p_k}-1}\right)_{k=1}^\infty\textrm{, } (\boldsymbol{w}(p_k))_{k=1}^\infty\textrm{, } \phantom{fasdf}
\\ \textrm{ with } \boldsymbol{w}(t)=\boldsymbol{\delta}(t)=0 \textrm{ and } (\boldsymbol{r}_j(t))_{j=1}^{m_{t}-1}=\boldsymbol{1}  \textrm{ for } t\in\mathbb{N}\setminus\left(p_k\right)_{k=1}^{\infty} \nonumber\end{align}
% \boldsymbol{w}(t)=\boldsymbol{\delta}(t)=0 \textrm{ for } t\in\mathbb{N}\setminus\left(p_k\right)_{k=1}^{\infty} \nonumber\end{align}

\noindent satisfy the conclusions of Proposition \ref{penultimate}. Let $\phi^l$, $\phi$ denote the quasiconformal mappings, normalized as in (\ref{hydrodynamical}), associated with the parameters as in (\ref{finite_solution}), (\ref{infinite_solution}), respectively. Note that $\phi^l_{\overline{z}}/\phi^l_{z} \rightarrow \phi_{\overline{z}}/\phi_{z}$ a.e. as $l\rightarrow\infty$ by (\ref{q.c._convergence}) and Remark \ref{localized_dilatation}, so that by taking a further subsequence in $l$ if necessary, we claim that

\begin{equation}\label{convergence_in_l} \phi^l \xrightarrow{l\rightarrow\infty} \phi \textrm{ uniformly on compact subsets of } \mathbb{C}.
\end{equation} 

\noindent Indeed, the maps $\phi^l$ converge in a subsequence to some quasiconformal map $\psi$ by normality of $(\phi^l)_{l=1}^\infty$, and since $\phi^l_{\overline{z}}/\phi^l_{z}$ converges a.e. to $\phi_{\overline{z}}/\phi_{z}$, one has $\psi_{\overline{z}}/\psi_z = \phi_{\overline{z}}/\phi_{z}$ a.e. by Theorem \ref{Lehto-Virtanen}, whence by the uniqueness of Theorem \ref{MRT} it follows that $\psi\equiv\phi$. 

Let $f:=g\circ\phi^{-1}$ be as associated with the parameters in (\ref{infinite_solution}). For any $k\in\mathbb{N}$ and $l>k$, (\ref{penultimate_relations}) holds true with $f$ replaced by $f^l:=g\circ(\phi^l)^{-1}$, whence the corresponding statements for $f$ follow from (\ref{q.c._convergence}) and (\ref{convergence_in_l}). The last conclusion in the statement of Proposition \ref{penultimate} also follows from (\ref{q.c._convergence}) and (\ref{convergence_in_l}).

\end{proof}

%Lastly, we need to verify that $f:=g\circ\phi^{-1}$ as in Proposition \ref{penultimate} is the desired function of Theorem \ref{main_theorem}, in other words $f$ has a wandering domain:

\begin{proof}[of Theorem \ref{main_theorem}] Take $f:=g\circ\phi^{-1}$ for parameters $\lambda$ as chosen in Remark \ref{choice_of_lambda}, $\boldsymbol{m}$, $\boldsymbol{R}$ as chosen in Proposition \ref{selection_of_r_m}, and  $\boldsymbol{w}$, $\boldsymbol{\delta}$, $(\boldsymbol{r}_j)$ as chosen in Proposition \ref{penultimate}. We claim that $\phi(D(z_{p_1}, 1/2))$ is contained in a wandering Fatou component for the map $f$. Note that: 

\begin{equation}\label{something} |g(z)-g(z_{p_1})| = |g(z)-f^{-n_2}\left(\phi\left(z_{p_2}\right)\right)| < \inf_{\xi\in\partial\mathbb{D}} \left| f^{-n_2}\left(\phi\left(\frac{2}{3}\xi+z_{p_2}\right)\right) - f^{-n_2}\left(\phi\left(z_{p_2}\right)\right) \right| \nonumber \end{equation}

\noindent for $z\in\{ z \in D_{p_1}: |z-z_{p_1}| < 1/2 \}$, since $g(z_{p_1})=f^{-n_2}(\phi(z_{p_2}))$ by Proposition \ref{penultimate}, and the inequality follows from (\ref{eq4}). In other words, $\phi^{-1}\circ f^{n_2+1} \circ \phi(D(z_{p_1}, 1/2)) \subset D(z_{p_2}, 2/3)$ (see Figure \ref{fig:general_strat}). Similar reasoning shows that 

\begin{equation} f^{n_{k+1}+1}\circ\phi\left(D\left(z_{p_k}, \frac{k}{k+1}\right)\right) \subset \phi\left(D\left(z_{p_{k+1}}, \frac{k+1}{k+2}\right)\right)\textrm{, for all } k \geq 1. \nonumber
\end{equation}

\noindent It follows then that the family of iterates $(f^n)_{n=1}^{\infty}$ is normal on $\phi(D(z_{p_k}, \frac{k}{k+1}))$ for any $k\geq1$, since any subsequence of $(f^n)_{n=1}^{\infty}$ is either bounded on $\phi(D(z_{p_k}, \frac{k}{k+1}))$, or contains a further subsequence converging to the constant limit function $\infty$. We claim that for $k\not = l$, $\phi(D(z_{p_k}, \frac{k}{k+1}))$ and $\phi(D(z_{p_l}, \frac{l}{l+1}))$ can not belong to the same Fatou component. To see this, note first that $\partial \phi(S^+)$ belongs to the Julia set of $f$, since $f(\phi(\partial S^+)) \subset [-1,1]$, and $[-1,1]$ iterates to $\infty$ under $f$. Next note that for $k\not=l$, there always exists some $m\in\mathbb{N}$ for which $f^m( \phi(D(z_{p_k}, \frac{k}{k+1})) ) \subset \phi(S^+)$ but $f^m( \phi(D(z_{p_l}, \frac{l}{l+1})) ) \not \subset \phi(S^+)$. Thus $f^m( \phi(D(z_{p_k}, \frac{k}{k+1})) )$ and $f^m( \phi(D(z_{p_l}, \frac{l}{l+1})) )$ are separated by the Julia set, and hence can not belong to the same Fatou component. Thus $\phi(D(z_{p_1}, 1/2))$ is contained in a wandering Fatou component for the map $f$. It remains to show that each singular value of $f$ escapes to infinity. Note that the critical values 

\[ \left( \left(w_{p_k}+\delta_{p_k}\left(\frac{\delta_{p_k}}{m_{p_k}}\right)^{\left(\frac{1}{m_{p_k}-1}\right)}\left(\frac{m_{p_k}-1}{m_{p_k}}\right)\xi_j \right)_{j=1}^{m_{p_k}-1} \right)_{k=1}^{\infty} \]

\noindent of $f$ iterate to $+1$ by Proposition \ref{penultimate}. Since $\boldsymbol{w}(t)=\boldsymbol{\delta}(t)=0$ for $t\in\mathbb{N}\setminus(p_k)_{k=1}^{\infty}$, the only other singular values of $f$ are $0$, $\pm1$ by Theorem \ref{g_extension}. Since $0$, $\pm1$ escape to $\infty$ under $f$, the Theorem is proven. 

\end{proof}

\end{document}